\providecommand{\R}{\mathbb{R}}
\providecommand{\B}{\mathbb{B}}
\providecommand{\avg}[1]{\langle #1 \rangle}
\providecommand{\esup}{\mathrm{ess\, sup}}
\providecommand{\str}{s\text{-}}
\title{The sharp $A_p$ constant for weights in a reverse-H\"older class}
\author{Martin Dindo\v{s} and Treven Wall}
\begin{document}
\newtheorem{thm}{Theorem}
\newtheorem{lem}{Lemma}

\begin{abstract}
Coifman and Fefferman established that the class of Muckenhoupt weights is equivalent to the class of weights satisfying the  ``reverse H\"older inequality". In a recent paper V. Vasyunin \cite{vasyunin} presented a proof of the reverse H\"older inequality with \emph{sharp} constants for the weights satisfying the usual Muckenhoupt condition. In this paper we present the inverse, that is, we use the Bellman function technique to find the sharp $A_p$ constants for weights in a reverse-H\"older class on an interval; we also find the sharp constants for the higher-integrability result of Gehring \cite{gehring}.

Additionally, we find sharp bounds for the $A_p$ constants of reverse-H\"older-class weights defined on rectangles in $\R^n$, as well as bounds on the $A_p$ constants for reverse-H\"older weights defined on cubes in $\R^n$, without claiming the sharpness.
\end{abstract}

\maketitle

\section{Introduction}
A weight $w$ (a non-negative, measurable function) on an interval $I$ is an $A_p(I)$ (or ``Muckenhoupt'') weight ($1<p<\infty$) if there is a constant $C < \infty$ such that the following inequality holds for every sub-interval $J \subset I$:
\begin{equation}\label{Apcond}
\avg{w}_J\avg{w^{1-p'}}^{p-1}_J \le C.
\end{equation}
Here $\avg{w}_I$ denotes $\frac{1}{|I|} \int_I w(t) \, dt$, the average of $w$ over $I$, and $p'$ is the conjugate exponent to $p$ ($p'=\frac{p}{p-1}$).

The reverse-H\"older (or ``Gehring") class $RH_p(I)$%
%%%start footnote%%%
\footnote[2]{There is no standard notation for this class. Some authors use the notation $G_p$, e.g., \cite{basile} and \cite{malak}; others denote it by $B_p$, e.g., \cite{fkp} and \cite{rios}; our notation $RH_p$ follows that of \cite{cruz} and \cite{neug} and is, we feel, the most natural.} %
%%%end footnote%%%
($1<p<\infty$) consists of all weights $w$ for which there is a constant $C < \infty$ so that following inequality holds for every sub-interval $J \subset I$:
\begin{equation}\label{RHpdef}
\frac{\avg{w^p}^{1/p}_J}{\avg{w}_J} \le C.
\end{equation}

If $w \in RH_p(I)$, then its $RH_p$ constant, denoted by $RH_p(w)$, is defined to be the smallest constant $C$ so that \eqref{RHpdef} holds for all $J \subset I$. We use $RH_p^{\delta}(I)$ to denote the class of all weights $w \in RH_p(I)$ such that $RH_p(w) \le \delta$. Note that by H\"older's inequality, the ratio \eqref{RHpdef} is never less than $1$; hence we only consider $\delta \ge 1$.

We can also define $RH_{\infty}(I)$ by taking the limit as $p \to \infty$ of \eqref{RHpdef}. Then, similarly, we say a weight $w$ is in $RH_{\infty}^{\delta}(I)$ if for every sub-interval $J \subset I$,
\begin{equation}\label{RHinftydef}
\frac{\esup_J w}{\avg{w}_J} \le \delta.
\end{equation}
It is worth noting that $RH_{\infty}(I)$ is strictly contained within $\bigcap_p RH_p(I)$. Among the $RH_p$ classes, $RH_{\infty}$ plays a role analogous to that of $A_1$ in the $A_p$ classes. Several equivalent definitions of $RH_{\infty}$ can be found in \cite{cruz}.

The class $A_p$ was first described by Muckenhoupt \cite{muckenhoupt}, and its connection with the reverse-H\"older inequality was first explored by Coifman and C. Fefferman \cite{coifman}, who established that $\bigcup_p RH_p(I) = \bigcup_p A_p(I)$; this union is called $A_{\infty}(I)$. There is an alternative description of $A_{\infty}$ weights as follows (see \cite{hruscev}); a weight $w$ is in $A_{\infty}(I)$ if there is a constant $C$ such that for all subintervals $J \subset I$, the following holds:
\begin{equation}\label{Ainftycond}
\avg{w}_J \exp(-\avg{\log(w)}_J) \le C.
\end{equation}

Our chief goal in this paper is to find the sharp constant $\tilde{C}$, depending only on $p, q$ and $\delta$ such that any $w \in RH_p^{\delta}$ satisfies \eqref{Apcond} (or \eqref{Ainftycond}) with constant $C = \tilde{C}$. We will denote the class of weights satisfying \eqref{Apcond} by $A_p^{C}(I)$  and those satisfying \eqref{Ainftycond} by $A_\infty^{C}(I)$, respectively. This result is the reverse direction of Vasyunin's work \cite{vasyunin}. He found the sharp constant $C$ such that any $w \in A_p^{\delta}$ belongs to the class $RH_q^{C}$. As a byproduct of our work on the above problem, we also are able to find the sharp constant $C$ in the embedding of $RH_p^{\delta}$ into $RH_t^C$ for $t > p$.

Our motivation to look at this problem arose when we attempted to establish a perturbation result for a certain class of nondivergence type elliptic operators. While studying this problem, we realized we needed to know to what $A_q$ class a certain elliptic measure belongs, given that we know it satisfies the reverse H\"older inequality with a known constant.

We are very grateful to Sasha Volberg who provided valuable insight and brought our attention to his own results in the diadic setting \cite{volberg} as well as results of Vasyunin \cite{vasyunin}.

For our work on the perturbation problem for nondivergence elliptic operators we must to establish these results not only on a real interval, but also on $\R^n$. This is the reason we have included the results in higher dimensions, despite the fact that in the cube case the constants are not sharp. The main point of Theorem \ref{ndimthm} is the asymptotic as $\delta\to 1$. We prove that for fixed $p$ and $q$, on a cube $Q \subset \R^n$,
\[RH_p^\delta(Q)\subset A_q^C(Q)\]
for some $C=C(\delta,p,q,n)$ and that $C\to 1$ as $\delta\to 1$.

\section{Statement of Principal Results}
To state our main results, we need to define the critical value of $q$. This value, $q^*=q^*(p,\delta)$, is the unique solution greater than one to \begin{equation}\label{qstardef}
\frac{(x/\delta)^p - 1}{x-1} = p.
\end{equation}

It is fairly easy to see that:
\begin{itemize}
\item For every $1<p<\infty$, $q^*(p,\delta) > \delta$.
\item For a fixed $\delta \ge 1$, $\lim_{p \to \infty} q^*(p,\delta) = \delta.$
\item For a fixed $1 < p < \infty$, $q^*(p, \delta) \sim \frac{(p\delta)^{p'}}{p}$ as $\delta \to \infty$.
\item For a fixed $1 < p < \infty$, $q^*(p, \delta) \sim (\delta^{p'}-1)^{1/p'}+1$ as $\delta \to 1$.
\end{itemize}
\begin{thm}\label{mainthm} For any weight $w \in RH_p^{\delta}(I)$, $1 < p <
\infty$ we have that $w\in A_q^{C_q(p,\delta)}$, i.e.,
\begin{equation}
\sup_{J \subset I}\,\avg{w}_J\avg{w^{1-q'}}^{q-1}_J \le
C_q(p,\delta)
\end{equation}
holds, where
\[
C_q(p,\delta)=
\begin{cases}
+\infty & 1< q \le q^*(p,\delta) \\
\frac{1}{q^*}\left(\frac{q-1}{q-q^*}\right)^{q-1} & q > q^*(p,\delta) \\
\end{cases},
\]
and $w\in A_\infty^{C_\infty(p,\delta)}$, i.e.,
\begin{equation}
\sup_{J \subset I}\, \avg{w}_J\exp(-\avg{\log(w)}_J) \le C_{\infty}(p,\delta),
\end{equation}
where
\[
C_{\infty}(p,\delta)= \frac{1}{q^*}\exp(q^*-1).
\]
Here, as throughout the paper, $q^* = q^*(p,\delta)$ is the solution to \eqref{qstardef}, above. The constants $C_q(p,\delta)$ and $C_{\infty}(p,\delta)$ in this statement are the best possible.
\end{thm}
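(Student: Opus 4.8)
The plan is to attack this via the Bellman function method, which is the natural tool given the sharp-constant goal and the reference to Volberg's work. The key observation is that both the hypothesis and the conclusion are expressed in terms of averages over subintervals, so we want to set up a Bellman function on a suitable domain in the plane (or three-space) that encodes simultaneously the $RH_p^\delta$ constraint and the quantity we wish to control. Concretely, for the $A_q$ estimate I would introduce variables $x = \avg{w}_J$, $y = \avg{w^p}_J$ (to see the $RH_p$ constraint) and $z = \avg{w^{1-q'}}_J$ (to see the $A_q$ quantity), and seek the maximal function $\mathbf{B}(x,y,z)$ on the domain $\{ x^p \le y \le \delta^p x^p\}$ that is locally concave along the directions arising from splitting an interval $J = J_- \cup J_+$ into halves. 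Since $1-q' < 0$, the function $t \mapsto t^{1-q'}$ is convex, and the interplay of this convexity with the concavity required of $\mathbf{B}$ is what produces the threshold at $q^*$.

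The key steps, in order, are: (i) formulate the exact Bellman function $\mathbf{B}$ whose value at the point corresponding to $w$ on $I$ dominates $\avg{w}_I \avg{w^{1-q'}}_I^{q-1}$, and reduce its computation to a boundary-value / concavity problem via the standard dyadic-splitting inequality $\mathbf{B}(\tfrac{u_-+u_+}{2}) \ge \tfrac12(\mathbf{B}(u_-)+\mathbf{B}(u_+))$; (ii) solve the resulting Monge–Ampère-type PDE (or, after an appropriate homogeneity reduction exploiting the scaling $w \mapsto \lambda w$, an ODE), using the extremal weights — typically power weights $w(t) = t^{\alpha}$ on a subinterval — to guess the foliation by extremal trajectories; (iii) read off from the closed-form solution both the finite value $\frac{1}{q^*}\left(\frac{q-1}{q-q^*}\right)^{q-1}$ when $q > q^*$ and the blow-up when $q \le q^*$, the latter by exhibiting weights in $RH_p^\delta$ (again power weights with exponent tuned so that the relevant average of $w^{1-q'}$ diverges) whose $A_q$ constant is unbounded; (iv) obtain $C_\infty$ either by passing to the limit $q \to \infty$ in $C_q$ (checking $\frac{1}{q^*}(\frac{q-1}{q-q^*})^{q-1} \to \frac{1}{q^*} e^{q^*-1}$, using that $q^*(p,\delta)$ is fixed while $q \to \infty$) or by running the same Bellman argument directly with the $A_\infty$ functional $\avg{w}\exp(-\avg{\log w})$, whose logarithmic structure again meshes with the $RH_p$ constraint; (v) verify sharpness by matching the extremal weights from step (ii)–(iii) against the claimed constants, so that the upper bound from the Bellman function and the lower bound from explicit examples coincide.

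I expect the main obstacle to be step (ii): correctly identifying the extremal foliation and solving the Bellman PDE on the curved domain $\{x^p \le y \le \delta^p x^p\}$ with the correct boundary behavior on both the lower boundary $y = x^p$ (where Hölder's inequality is an equality, forcing $w$ to be constant) and the upper boundary $y = \delta^p x^p$ (the extreme of the $RH_p$ constraint). Getting the matching conditions right along these boundaries — and in particular seeing that the definition \eqref{qstardef} of $q^*$ is exactly the solvability condition for the extremal trajectory to remain inside the domain — is the delicate heart of the argument. A secondary difficulty is the passage from the abstract Bellman inequality to a rigorous proof that the function so constructed is genuinely the supremum (both the "$\le$" via concavity/iteration on dyadic subintervals together with a Lebesgue-differentiation argument at the finest scale, and the "$\ge$" via the explicit extremizers), but this is by now a well-understood routine once the function is in hand.
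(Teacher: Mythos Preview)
Your outline is essentially the paper's own strategy: a Bellman function on the domain $\{x^p\le y\le(\delta x)^p\}$, homogeneity reduction to an ODE, a singular-Hessian ansatz (Monge--Amp\`ere), power-law extremizers, and the $q\to\infty$ limit for $C_\infty$. Two points of divergence are worth flagging.

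First, the three-variable setup $\mathbf{B}(x,y,z)$ is one more coordinate than needed. The paper takes $\mathbf{B}(x,y)=\sup\{\avg{w^{1-q'}}:\avg{w}=x,\ \avg{w^p}=y,\ w\in RH_p^\delta\}$, so the quantity you want to control is the \emph{value} of the Bellman function rather than an extra input. This simplifies the PDE to a two-variable problem that, after scaling, becomes a genuine ODE in $y/x^p$.

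Second, and more substantively, the step you call ``well-understood routine'' hides a real difficulty here: when you split $J=J^-\cup J^+$, the segment $[x^-,x^+]$ joining the corresponding data points need not stay inside $\Omega_\delta$, so plain dyadic iteration plus Lebesgue differentiation does not directly give the upper bound. The paper handles this by (a) proving the inequality $\mathbf{B}(x;\delta)\le B(x;\epsilon)$ for every $\epsilon>\delta$, where $B$ is the candidate function, and (b) choosing the split point (not dyadically) so that $[x^-,x^+]\subset\Omega_\epsilon$, with the splitting ratio bounded away from $0$ and $1$ uniformly; one then lets $\epsilon\downarrow\delta$. You should anticipate this wrinkle rather than treat the iteration as boilerplate.
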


\begin{thm}\label{mainthm2} If $w \in RH_{\infty}^{\delta}(I)$, then $w\in A_q^{C_q(\infty,\delta)}$, i.e.,
\begin{equation}
\sup_{J \subset I}\,\avg{w}_J\avg{w^{1-q'}}^{q-1}_J \le C_q(\infty,\delta),
\end{equation}
where
\[
C_q(\infty,\delta)=
\begin{cases}
+\infty & 1 < q \le \delta \\
\frac{1}{\delta} \left(\frac{q-1}{q-\delta}\right)^{q-1} & q > \delta \\
\end{cases},
\]
and $w\in A_\infty^{C_\infty(\infty,\delta)}$, i.e.,
\begin{equation}
\sup_{J \subset I}\, \avg{w}_J\exp(-\avg{\log(w)}_J) \le C_{\infty}(\infty,\delta),
\end{equation}
where
\[
C_{\infty}(\infty,\delta)= \frac{1}{\delta}\exp(\delta -1).
\]
Again, the constants $C_q(\infty,\delta)$ and $C_{\infty}(\infty,\delta)$ in this statement are the best possible.
\end{thm}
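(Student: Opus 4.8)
The plan is to derive Theorem \ref{mainthm2} as the limiting case $p \to \infty$ of Theorem \ref{mainthm}, while also giving a direct self-contained argument via the Bellman function. Recall from the bulleted remarks that $\lim_{p\to\infty} q^*(p,\delta) = \delta$; so formally $C_q(\infty,\delta) = \lim_{p\to\infty} C_q(p,\delta)$ and $C_\infty(\infty,\delta) = \lim_{p\to\infty} C_\infty(p,\delta)$, matching the stated formulas with $q^*$ replaced by $\delta$. However, interchanging the limit with the supremum over subintervals requires care (the class $RH_\infty^\delta$ is \emph{strictly} contained in $\bigcap_p RH_p^\delta$), so I would not rely on this heuristic alone for the upper bound.

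For the upper bound (that every $w \in RH_\infty^\delta(I)$ lies in $A_q^{C_q(\infty,\delta)}$ and $A_\infty^{C_\infty(\infty,\delta)}$), I would set up the Bellman function directly. Fix an interval $J$ and consider the variables $\avg{w}_J = x_1$, $\avg{w^{1-q'}}_J = x_2$ (or $\avg{\log w}_J = x_3$ in the $A_\infty$ case), together with the constraint variable recording $\esup_J w$. The key structural fact is that $w \in RH_\infty^\delta$ is a \emph{local} condition stable under restriction to subintervals and under the splitting $J = J_- \cup J_+$: if $\esup_{J_\pm} w \le \delta \avg{w}_{J_\pm}$ then the same holds on $J$ with the averages combining convexly. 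One then defines
\[
\B(x) = \sup\left\{ \avg{w}_J \avg{w^{1-q'}}_J^{q-1} : w \in RH_\infty^\delta(J),\ \avg{w}_J = x_1,\ \avg{w^{1-q'}}_J = x_2 \right\}
\]
and shows it satisfies the appropriate concavity/boundary conditions; the candidate function is built from the extremal weights, which for $RH_\infty$ should be power weights $w(t) = t^{\alpha}$ on the relevant interval with $\alpha$ chosen so that $\esup/\avg{\cdot}$ equals exactly $\delta$. Solving the resulting ODE for $\B$ and optimizing over the free exponent produces the factor $\frac{1}{\delta}\left(\frac{q-1}{q-\delta}\right)^{q-1}$, and the blow-up for $q \le \delta$ comes from the divergence of $\avg{w^{1-q'}}$ against these extremizers.

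For sharpness (that $C_q(\infty,\delta)$ and $C_\infty(\infty,\delta)$ cannot be improved), I would exhibit the explicit extremal family. On $J = (0,1)$, take $w_\alpha(t) = (1-\alpha) t^{-\alpha}$ for $\alpha \in [0,1)$, which satisfies $\avg{w_\alpha}_J = 1$ and $\esup_J w_\alpha / \avg{w_\alpha}_J$; the $RH_\infty^\delta$ membership pins down the admissible range of $\alpha$ via $\esup$ on every subinterval $(0,s)$, giving the threshold $\alpha = 1 - 1/\delta$ at the boundary. Then compute $\avg{w_\alpha}_J \avg{w_\alpha^{1-q'}}_J^{q-1}$ and $\avg{w_\alpha}_J \exp(-\avg{\log w_\alpha}_J)$ explicitly as functions of $\alpha$, and check that as $\alpha \uparrow 1 - 1/\delta$ these converge to $C_q(\infty,\delta)$ and $C_\infty(\infty,\delta)$ respectively; for $q \le \delta$ the first quantity is already $+\infty$ for $\alpha$ close enough to the threshold.

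The main obstacle I anticipate is the rigorous justification that the Bellman function for the $RH_\infty$ problem has exactly the claimed form — in particular, verifying that the supremum is attained (in the limit) by the power-weight family rather than some more exotic competitor, and handling the boundary/obstacle conditions of the associated PDE on the correct domain. The $p \to \infty$ passage gives strong guidance for the \emph{answer}, but making it a proof rather than a heuristic requires either (a) a separate Bellman argument tailored to the $\esup$ constraint, or (b) a careful approximation showing that the extremizers for finite $p$ converge, after rescaling, to $RH_\infty^\delta$ extremizers — and that no loss occurs in the limit. I would pursue route (a), since the $\esup$ constraint is actually simpler to handle directly (it is a pointwise bound rather than an integral one), reducing the Bellman domain to a region cut out by $x_1 \le \esup_J w \le \delta x_1$ and making the concavity verification more transparent.
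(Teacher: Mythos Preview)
Your overall strategy --- a Bellman-function argument adapted to the $\esup$ constraint, together with explicit power-weight extremizers --- is exactly what the paper does. But two concrete errors in your setup would prevent the proof from going through.

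First, your Bellman function is not well-defined as written. You fix both $\avg{w}_J = x_1$ and $\avg{w^{1-q'}}_J = x_2$ and then take the supremum of $\avg{w}_J\avg{w^{1-q'}}_J^{q-1}$, but that quantity is already $x_1 x_2^{q-1}$ --- there is nothing to optimize. The correct choice (and the one the paper uses) is to take coordinates $x = (\avg{w}_J, \esup_J w)$ on the domain $\Omega_\delta = \{x_1 \le x_2 \le \delta x_1\}$ and define $\B(x) = \sup\{\avg{w^{1-q'}}_J\}$. The $\esup$ coordinate does not split linearly under $J = J^- \cup J^+$: instead $x_2^0 = \max\{x_2^-, x_2^+\}$. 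This forces a non-standard concavity condition, and the paper handles it by requiring $\frac{\partial B}{\partial x_2} \ge 0$ together with a degenerate Hessian; the resulting candidate is $B(x) = x_2^{1-q'}\bigl(\frac{q - (x_1/x_2)\delta}{q-\delta}\bigr)$.

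Second, your extremal family $w_\alpha(t) = (1-\alpha)t^{-\alpha}$ is not in $RH_\infty^\delta$ at all: for any $\alpha > 0$ this weight is unbounded near $0$, so $\esup_{(0,s)} w_\alpha = +\infty$ on every interval touching the origin. The $RH_\infty$ condition forces the weight to be locally bounded; the extremizers must be \emph{increasing} power functions. The paper uses $w(t) = c(t/a)^{\delta-1}$ on $[0,a]$ and $w \equiv c$ on $[a,1]$ (a positive exponent $\nu = \delta - 1$), and verifies in the appendix that its $RH_\infty$ constant is exactly $\nu + 1 = \delta$. With this family one computes $\avg{w^{1-q'}}$ and recovers $B(x)$ on the nose; the blow-up for $q \le \delta$ comes from $(1-q')(\delta-1) \le -1$, not from an unbounded weight.
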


Since, for a fixed $\delta \ge 1$, $\lim_{p \to \infty} q^*(p,\delta) = \delta$, Theorem \ref{mainthm2} comes as no surprise considering Theorem \ref{mainthm}. However, the proof of Theorem \ref{mainthm} must be adjusted to prove Theorem \ref{mainthm2}. We will primarily address the proof of Theorem \ref{mainthm}, treating the proof of Theorem \ref{mainthm2} as a special case where the need arises.

For the other endpoints $p=1$ and $q=1$, a few comments are in order. A moment's thought reveals that $RH_1(I)$ is not an interesting class to consider, as every positive $L^1$ function on $I$ satisfies the condition. It is also evident from Theorem \ref{mainthm} that given any $\delta > 1$, and any $1 < p \le \infty$ there is a weight $w \in RH_p^{\delta}(I)$ which is excluded from at least one $A_q(I)$ class; hence, since $A_1(I) \subset \bigcap_q A_q(I)$, there is no $A_1$ constant which can represent the entire class $RH_p^{\delta}(I)$.

Our method also allows us to find the sharp constants in Gehring's self-\\
improvement result for the reverse-H\"older class \cite{gehring}. We define the critical exponent $t^* = t^*(p, \delta)$ as the unique solution greater than $p$ to $\left(\frac{\delta x}{x-1}\right)^p \frac{x-p}{x} = 1$.
\begin{thm}\label{gehringthm}
For any weight $w \in RH_p^{\delta}(I)$, $1 < p < \infty$ we have that $w\in RH_t^{C_t(p,\delta)}$, i.e.,
\begin{equation}
\sup_{J \subset I}\,\frac{\avg{w^t}^{1/t}_J}{\avg{w}_J} \le C_t(p,\delta)
\end{equation}
holds, where
\[
C_t(p,\delta)=
\begin{cases}
\frac{t^*-1}{t^*}\left(\frac{t^*}{t^*-t}\right)^{1/t} & p \le t < t^*(p,\delta) \\
+\infty & t \ge t^*(p,\delta) \\
\end{cases},
\]
and the constant $C_t(p,\delta)$ is sharp.
\end{thm}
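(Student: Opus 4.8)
The plan is to run the Bellman-function scheme on the interval, in the same spirit as the argument for Theorem~\ref{mainthm}, now with the test monomial $w^t$ replacing $w^{1-q'}$. Fix a subinterval $J$, normalize $|J| = 1$, and set $x_1 = \avg{w}_J$, $x_p = \avg{w^p}_J$. By Jensen's inequality together with \eqref{RHpdef}, the point $(x_1,x_p)$ is confined to the domain
\[
\Omega_\delta = \{(x_1,x_p) : x_1^p \le x_p \le \delta^p x_1^p,\ x_1 > 0\}.
\]
Introduce the Bellman function
\[
\mathbf{B}(x_1,x_p) = \sup\Big\{\avg{w^t}_J : \avg{w}_J = x_1,\ \avg{w^p}_J = x_p,\ w \in RH_p^\delta(J)\Big\},
\]
so that the sought-after constant is $C_t(p,\delta) = \big(\sup_{\Omega_\delta}\mathbf{B}(x_1,x_p)/x_1^t\big)^{1/t}$. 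Replacing $w$ by $\lambda w$ gives $\mathbf{B}(\lambda x_1,\lambda^p x_p) = \lambda^t\mathbf{B}(x_1,x_p)$, so it is enough to study the one-variable profile $b(s) = \mathbf{B}(1,s)$ on $s \in [1,\delta^p]$, whence $C_t(p,\delta)^t = \max_{[1,\delta^p]} b$.

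Next comes the standard identification of $\mathbf{B}$. Splitting $J$ into its halves and using that $x_1$, $x_p$ and $\avg{w^t}_J$ all average linearly under such a split shows, on one hand, that $\mathbf{B}$ is locally concave on $\Omega_\delta$ and, on the other, that any locally concave $G$ on $\Omega_\delta$ with $G(x_1,x_1^p) \ge x_1^t$ on the lower boundary dominates $\mathbf{B}$; the boundary value there is forced, since $x_p = x_1^p$ forces $w \equiv x_1$ a.e. Hence $\mathbf{B}$ is the least locally concave majorant of the datum $x_1^t$ on $\{x_p = x_1^p\}$, and displaying \emph{one} such $G$ already yields the upper bound $\avg{w^t}_J \le G(x_1,x_p)$. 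To locate $\mathbf{B}$ I would use the ansatz — as in Gehring's original argument, and forced by scaling — that the extremal weights are built from the power functions $w(x) = |x - x_0|^{-\gamma}$, where $\gamma = \gamma(p,\delta) \in (0,1/p)$ solves $(1-\gamma)^p = \delta^p(1 - p\gamma)$. A short computation shows such a weight satisfies $\avg{w^p}_{[0,a]}^{1/p} = \delta\,\avg{w}_{[0,a]}$ for every $a$ (so it runs along the upper boundary of $\Omega_\delta$), while $\avg{w^t}_{[0,a]} < \infty$ exactly when $t\gamma < 1$. Setting $x = 1/\gamma$ recasts $(1-\gamma)^p = \delta^p(1-p\gamma)$ as $\big(\tfrac{\delta x}{x-1}\big)^p\tfrac{x-p}{x} = 1$, so $t^* = 1/\gamma$; thus these weights remain in $RH_p^\delta(I)$ yet drop out of $L^t_{loc}$ once $t \ge t^*$, which is the source of the value $+\infty$ there, and over $[0,1]$ they give
\[
\frac{\avg{w^t}_{[0,1]}^{1/t}}{\avg{w}_{[0,1]}} = \frac{t^*-1}{t^*}\Big(\frac{t^*}{t^*-t}\Big)^{1/t}.
\]

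For $p \le t < t^*$ the candidate for $\mathbf{B}$ is then assembled by foliating $\Omega_\delta$ by the trajectories attached to these power laws — each running from the upper boundary (where the power-law data sit) to the lower boundary (where $\mathbf{B} = x_1^t$) — and integrating the Bellman ODE along them; this produces a closed form for $b(s)$ whose maximum over $[1,\delta^p]$ is $C_t(p,\delta)^t$. The proof then splits into two halves: for the \emph{upper bound}, one verifies that this explicit function is locally concave on $\Omega_\delta$ and carries the correct boundary values, and concludes $\avg{w^t}_J \le \mathbf{B}(x_1,x_p) \le C_t(p,\delta)^t\,x_1^t$ (first for bounded weights, then in general by a routine truncation and limiting argument); for \emph{sharpness}, one takes the power weight $w(x) = x^{-\gamma}$ on $I = [0,1]$ — with the easy check that subintervals avoiding the singularity have $RH_p$ ratio at most $\delta$, so that $w \in RH_p^\delta(I)$ — and observes that it realizes $C_t(p,\delta)$, the case $t \ge t^*$ being covered by the same weight.

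I expect the principal obstacle to be verifying that the explicitly-constructed candidate is genuinely locally concave on all of $\Omega_\delta$: this means controlling the second derivative of $\mathbf{B}$ both along and transverse to the foliation and checking that the extremal trajectories sweep out the entire domain without crossing, and it is here that the condition $t < t^*$ must be recognized as precisely what makes the construction close up. By comparison the divergence for $t \ge t^*$ is immediate from the power-law examples, and the homogeneity reduction together with the abstract least-concave-majorant characterization is routine once $\Omega_\delta$ has been correctly identified — the one delicate point being that, in the lower-bound construction, one must only perform splittings that keep the Bellman point inside $\Omega_\delta$.
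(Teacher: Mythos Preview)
Your proposal is correct and follows essentially the same Bellman-function route as the paper, which treats Theorem~\ref{gehringthm} by taking $t = 1-q'$ with $\tfrac{p-1}{p} < q < 1$ in the \emph{same} Bellman function $\B(x;p,q,\delta)$ used for Theorem~\ref{mainthm} (your $\gamma$ equals the paper's $-\nu = -s^-/(1-ps^-)$, and your power weight $x^{-\gamma}$ is exactly the paper's extremal on $\Gamma_\delta$). One small correction: the delicate splitting issue---that the \emph{chord} $[x^-,x^+]$, not the endpoints, may exit $\Omega_\delta$---arises in the \emph{upper}-bound step (showing the concave candidate dominates $\mathbf{B}$), and the paper resolves it not by halving but by choosing the split point so the chord stays in $\Omega_\epsilon$ for $\epsilon>\delta$, then letting $\epsilon\to\delta$.
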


In considering the $n$-dimensional analog of our results, we are no longer able to find the sharp constants. However, we find useful asymptotic information as $\delta \to 1$,
\begin{thm}\label{ndimthm}
Let $n > 1$ and let $Q \subset \R^n$ be a cube with sides parallel to the coordinate axes. Fix $p > 1$, $q > 1$ and $\eta > 1$. Then there is a $\delta > 1$ such that any weight $w \in RH_p^{\delta}(Q)$ is in $A_q^{\eta}(Q)$, that is,
\begin{equation}
\avg{w}_K\avg{w^{1-q'}}^{q-1}_K \le \eta,
\end{equation}
for every cube $K \subset Q$ with sides parallel to the coordinate axes.
\end{thm}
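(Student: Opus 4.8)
The plan is to reduce the $n$-dimensional cube problem to the one-dimensional interval results already established in Theorem \ref{mainthm} (or, for the endpoint, Theorem \ref{mainthm2}), by slicing the cube into lines parallel to the coordinate axes and applying a Fubini-type argument coordinate by coordinate. The key observation is that the reverse-Hölder condition on all subcubes of $Q$ is considerably stronger than what one gets on one-dimensional slices, but it does control the behavior of $w$ along each axis direction up to a loss that degrades gracefully as $\delta \to 1$. Concretely, first I would fix a cube $K \subset Q$ and write $K = K_1 \times K' $ where $K_1$ is an interval in the first coordinate and $K'$ a cube in the remaining $n-1$ coordinates. For a.e. $x' \in K'$, the slice function $w(\cdot, x')$ need not lie in any $RH_p$ class with a $\delta$-independent constant, so a direct slice argument will not work; instead the plan is to run an induction on dimension where at each stage one controls an averaged quantity.

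The main technical device I would use is the following: given $w \in RH_p^{\delta}(Q)$, show that the ``partial average'' $w_1(x_1) := \avg{w(x_1, \cdot)}_{K'}$, as a function of $x_1 \in K_1$, belongs to $RH_p^{\delta'}(K_1)$ with $\delta' = \delta'(\delta, p, n)$ satisfying $\delta' \to 1$ as $\delta \to 1$; and similarly for $w^{1-q'}$ one needs a reverse bound. The $RH_p$ bound for $w_1$ follows from Minkowski's integral inequality: $\avg{w_1^p}_{J_1}^{1/p} = \big(\avg{\,\avg{w(x_1,\cdot)}_{K'}^p\,}_{J_1}\big)^{1/p} \le \big\langle \avg{w(x_1,\cdot)^p}_{K'}^{1/p}\cdot(\text{something})\big\rangle$ — here one must be careful, since Minkowski goes the wrong way for $p>1$. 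The honest route is: for any subinterval $J_1 \subset K_1$, the subcube $J_1 \times K' \subset Q$ satisfies \eqref{RHpdef}, giving $\avg{w^p}_{J_1 \times K'}^{1/p} \le \delta \avg{w}_{J_1 \times K'}$. Then one passes from $\avg{w^p}_{J_1\times K'}$ to $\avg{w_1^p}_{J_1}$ using Hölder in the $x'$ variable (which costs a factor $|K'|$-normalized, i.e., costs nothing after normalization only if $p$ is replaced appropriately — this is the step requiring care, and it is where a genuine, $\delta$-independent constant may enter for fixed $p,q,n$, but it stays \emph{bounded}). Once $w_1 \in RH_p^{\delta'}(K_1)$, Theorem \ref{mainthm} gives $w_1 \in A_q^{C_q(p,\delta')}(K_1)$, and since $C_q(p,\delta') \to 1$ as $\delta' \to 1$ (because $q^*(p,\delta') \to q^*(p,1) \le$ the relevant value, and the formula for $C_q$ tends to $1$), choosing $\delta$ close enough to $1$ forces $\avg{w_1}_{K_1}\avg{w_1^{1-q'}}_{K_1}^{q-1}$ to be below $\eta^{1/n}$, say.

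The induction then proceeds: having controlled the average in the first coordinate, one fixes $x_1$ and treats $w(x_1, \cdot)$ on the $(n-1)$-cube $K'$, noting that $w(x_1,\cdot)$ inherits an $RH_p^{\delta''}$ bound for a.e.\ $x_1$ — again via the cube condition applied to thin slabs $[x_1, x_1+h]\times K''$ and a Lebesgue differentiation argument as $h \to 0$, giving $\delta'' \to 1$ as $\delta \to 1$. Combining the one-dimensional $A_q$ bound in each coordinate via the tensor/telescoping inequality
\[
\avg{w}_K \avg{w^{1-q'}}_K^{q-1} \le \prod_{i=1}^{n} \big(\text{one-dimensional } A_q \text{ factor in coordinate } i\big),
\]
which itself follows from iterating Hölder/Jensen, yields $\avg{w}_K\avg{w^{1-q'}}_K^{q-1} \le \prod_{i=1}^n \eta^{1/n} = \eta$.

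The main obstacle I anticipate is making the passage from the full cube average to the one-dimensional partial average rigorous without losing the asymptotic $\delta \to 1$: Minkowski's inequality points the wrong way for $p > 1$, so one cannot simply say the partial average is in $RH_p$ with the same constant. The fix is to \emph{not} form partial averages but rather to apply the one-dimensional theorem along almost every line (using that the restriction of $w$ to a.e.\ axis-parallel line lies in $RH_p^{\delta}$ of that line — this does follow, by Lebesgue differentiation applied to the cube condition on degenerating slabs), obtain the pointwise-in-the-transverse-variables bound $\avg{w(\cdot,x')}_{J}\avg{w(\cdot,x')^{1-q'}}_J^{q-1} \le C_q(p,\delta)$ for a.e.\ $x'$, and then integrate in $x'$ using Hölder to assemble the $n$-dimensional constant. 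The bookkeeping of how the one-dimensional constant $C_q(p,\delta)$, which blows up for $q \le q^*(p,\delta)$, interacts with the requirement $q > 1$ fixed — one needs $\delta$ small enough that $q^*(p,\delta) < q$, which is possible since $q^*(p,\delta) \to q^*(p,1)$ and, as noted in the bulleted remarks, $q^*(p,\delta) \sim (\delta^{p'}-1)^{1/p'} + 1 \to 1$ as $\delta \to 1$ — is the other place where care is needed, but it is exactly the content of the displayed asymptotics already recorded before Theorem \ref{mainthm}.
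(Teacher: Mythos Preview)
Your slicing/Fubini strategy has a genuine gap: every step where you pass from a cube to a lower-dimensional object requires the reverse-H\"older inequality on \emph{rectangles} (slabs), but the hypothesis $w\in RH_p^{\delta}(Q)$ in Theorem~\ref{ndimthm} only gives the inequality on sub\emph{cubes}. Concretely, your sentence ``for any subinterval $J_1\subset K_1$, the subcube $J_1\times K'\subset Q$ satisfies \eqref{RHpdef}'' is false unless $|J_1|$ equals the side-length of $K'$; in general $J_1\times K'$ is a rectangle, not a cube. The same objection kills the Lebesgue-differentiation claim that the restriction of $w$ to a.e.\ axis-parallel line lies in $RH_p^{\delta}$ of that line: the ``degenerating slabs'' $[x_1,x_1+h]\times K''$ you would need are not cubes, so the cube hypothesis gives no control on them. (Your approach is essentially the proof of Theorem~\ref{strongRHpthm} for the \emph{strong} class $\str RH_p^{\delta}$, where the rectangle condition is assumed; there it works, and with sharp constants.)

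The paper's argument avoids slicing entirely. It runs the Bellman-function machinery directly in $\R^n$, but with the splitting forced to be the dyadic bisection of a cube into $2^n$ equal subcubes. The key new ingredient is Lemma~\ref{ratiolemma}: for $\delta$ close enough to $1$, the $RH_p^{\delta}$ condition on the parent cube $Q$ and the $2^n$ children $Q_i$ forces the ratios $\avg{w}_{Q_i}/\avg{w}_{Q_j}$ to be uniformly close to $1$. From this one computes an explicit $\epsilon=\epsilon(\delta,p,n)\ge\delta$ such that the convex hull of the $2^n$ points $(\avg{w}_{Q_i},\avg{w^p}_{Q_i})$ lies in $\Omega_{\epsilon}$, which is what is needed to apply the concavity of $B(\,\cdot\,;p,q,\epsilon)$ at each splitting step (Lemma~\ref{ndimlemma}, the analogue of Lemma~\ref{omegaepsilonlemma}). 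One then checks $\epsilon\to 1$ as $\delta\to 1$, whence $q^*(p,\epsilon)\to 1$ and $C(\delta,p,q,n)\to 1$.
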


The standard definition of $RH_p$ in $n$-dimensions is based on cubes (or balls). However, if one strengthens the definition of $RH_p(I)$ to require the inequality $\frac{\avg{w^p}^{1/p}_J}{\avg{w}_J} \le C$ to hold for all bounded, open rectangles $J \subset I$, a new, smaller class of weights is formed (these classes are considered in, e.g., \cite{basile2}, \cite{kinnunen}). We will call this class \emph{strong} $RH_p(I)$ and denote it by $\str RH_p(I)$. Similarly, one can define \emph{strong} $A_q$ (denoted $\str A_q$). With these definitions in mind,
\begin{thm}\label{strongRHpthm}
Let $1 < p < \infty$, let $I \subset \R^n$ be a bounded, open rectangle, and assume $w \in \str RH_p^{\delta}(I)$. Then $w\in \str A_q^{C_q(p,\delta)}(I)$ and $w \in \str A_{\infty}^{C_{\infty}(p, \delta)}(I)$, where $C_q(p, \delta)$ and $C_{\infty}(p, \delta)$ are the constants in Theorem \ref{mainthm}, independent of $n$. Also, $\str RH_{\infty}^{\delta}(I) \subset \str A_q^{C_q(\infty,\delta)}(I)$ and $\str RH_{\infty}^{\delta}(I) \subset \str A_{\infty}^{C_{\infty}(\infty, \delta)}(I)$, where $C_q(\infty, \delta)$ and $C_{\infty}(\infty, \delta)$ are the constants in Theorem \ref{mainthm2}. Finally, $\str RH_p^{\delta}(I) \subset \str RH_t^{C_t(p,\delta)}(I)$, where $C_t(p,\delta)$ is the constant in Theorem \ref{gehringthm}. In all cases, these constants are sharp.
\end{thm}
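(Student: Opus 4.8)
The plan is to deduce this theorem by re-running the one-dimensional arguments behind Theorems \ref{mainthm}, \ref{mainthm2} and \ref{gehringthm} essentially verbatim, once one isolates the single property of intervals that those proofs actually use. That property is the \emph{bisection identity}: if a bounded open rectangle $K=K_1\times\cdots\times K_n\subset I$ is cut along one of the coordinate axes into sub-rectangles $K^-,K^+$ of equal measure, then for every exponent $r$
\[
\avg{w^r}_K=\tfrac12\bigl(\avg{w^r}_{K^-}+\avg{w^r}_{K^+}\bigr).
\]
This is exactly the convex-combination (martingale) relation that is used when an interval is bisected in the Bellman-function proof, and it holds with identical numerics in every dimension; so the Bellman function, and the constant one extracts from it, do not see $n$.

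Concretely, I would keep the very Bellman function $\mathbf B$ built in the proof of Theorem \ref{mainthm} (respectively of Theorems \ref{mainthm2} and \ref{gehringthm}), defined on the domain cut out by the $RH_p^\delta$ (resp.\ $RH_\infty^\delta$) constraint, together with its two defining features: the obstacle bound relating $\mathbf B$ to the functional $F$ that encodes the target $A_q$ / $A_\infty$ / $RH_t$ estimate, and the ``main inequality'' $\mathbf B(\mathbf x)\ge\tfrac12\bigl(\mathbf B(\mathbf x^-)+\mathbf B(\mathbf x^+)\bigr)$ along splittings $\mathbf x=\tfrac12(\mathbf x^-+\mathbf x^+)$ inside the domain. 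Given $w\in\str RH_p^\delta(I)$ and a sub-rectangle $K\subset I$, the average data of $w$ on $K$ lies in the domain because $w\in\str RH_p^\delta(K)$; I would decompose $K$ into its dyadic sub-rectangles, iterate the main inequality using the bisection identity, and let the generation tend to infinity, the limit being pinned down by the obstacle bound on the boundary of the domain via Lebesgue differentiation for the basis of axis-parallel rectangles. Exactly as in the one-dimensional case, this gives $F(\text{data of }K)\le C$ with $C$ equal to $C_q(p,\delta)$, $C_\infty(p,\delta)$, $C_t(p,\delta)$, or the corresponding $\infty$-variants; since $K$ is arbitrary, all the stated inclusions follow with the constants of Theorems \ref{mainthm}--\ref{gehringthm}.

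For sharpness I would lift the one-dimensional extremizers. Write $I=I_1\times I'$ with $I'\subset\R^{n-1}$, let $w_0$ be a near-extremal weight on $I_1$ for the relevant one-dimensional problem, and put $w(x_1,x'):=w_0(x_1)$. By Fubini, $\avg{w^r}_K=\avg{w_0^r}_{K_1}$ for every rectangle $K=K_1\times K'\subset I$ and every $r$, and likewise $\avg{\log w}_K=\avg{\log w_0}_{K_1}$ and $\esup_K w=\esup_{K_1}w_0$; hence the supremum over sub-rectangles of each of the $RH_p$, $A_q$, $A_\infty$ and $RH_t$ ratios of $w$ equals the supremum over subintervals of the corresponding ratio of $w_0$. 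Thus $\str RH_p(w)=RH_p(w_0)=\delta$ while $\str A_q(w)=A_q(w_0)$ can be made as close as we like to $C_q(p,\delta)$, and similarly for the $A_\infty$ and Gehring constants; so none of these constants can be lowered, and they are sharp.

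The obstacle I anticipate is not any new inequality but the measure-theoretic passage to the limit in the Bellman step: axis-parallel rectangles do not form a Vitali basis, so differentiation over them rests on the Jessen--Marcinkiewicz--Zygmund theorem, whose local $L\log^{n-1}L$ hypothesis has to be verified --- it is met, since $\str RH_p^\delta(I)$ forces $w\in L^p_{\mathrm{loc}}$, while the negative moments $\avg{w^{1-q'}}$ appearing in the $A_q$ functional are controlled by the elementary (non-sharp) inclusion $\str RH_p^\delta(I)\subset\str A_\infty(I)$, which yields local integrability of a small negative power of $w$. A cleaner route, and probably the one I would actually write up, is to first prove the dyadic versions of all the inclusions, where this subtlety is absent, and then promote them to arbitrary sub-rectangles by the standard translation--dilation--localization argument, just as in dimension one. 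Beyond this bookkeeping, the proof is a faithful transcription of the $n=1$ case.
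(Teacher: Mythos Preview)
Your sharpness argument is fine and matches the paper's: lifting the one-dimensional extremizers $w_{c,a,\nu}$ to functions of $x_1$ alone works, and Fubini gives equality of all the relevant ratios.

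The gap is in the upper-bound direction. You assert that the one-dimensional Bellman argument only uses the \emph{bisection} identity $\avg{w^r}_K=\tfrac12(\avg{w^r}_{K^-}+\avg{w^r}_{K^+})$, and propose to iterate the main inequality along dyadic sub-rectangles. But the paper's one-dimensional proof does \emph{not} use bisection: Lemma~\ref{concavitylemma} only gives concavity of $B$ along segments $[x^-,x^+]$ that lie entirely in the domain $\Omega_\epsilon$, and under midpoint splitting that segment can leave $\Omega_\epsilon$. This is precisely why Lemma~\ref{splittinglemma} exists: for each weight and each sub-interval it produces an \emph{adaptive} split with ratios $\alpha^\pm$ (generally $\ne\tfrac12$) so that $[x^-,x^+]\subset\Omega_\epsilon$, uniformly bounded away from $0$ and $1$. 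The proof of Lemma~\ref{omegaepsilonlemma} then iterates concavity along these non-dyadic splits and passes $\epsilon\downarrow\delta$. If you force dyadic splitting you are in the situation of Theorem~\ref{ndimthm}: the convex hull of the children's data only lies in some $\Omega_{\epsilon(\delta)}$ with $\epsilon(\delta)>\delta$, and you recover $B(x;p,q,\epsilon(\delta))$, not $B(x;p,q,\delta)$ --- i.e., non-sharp constants.

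The paper's fix for rectangles is exactly the missing idea: do \emph{all} the splitting in a single coordinate direction. Given a sub-rectangle $K$, cut it by a hyperplane orthogonal to (say) its longest side at position $\alpha^-$; the two pieces are again sub-rectangles, and $\avg{w^r}_K=\alpha^-\avg{w^r}_{K^-}+\alpha^+\avg{w^r}_{K^+}$ for arbitrary $\alpha^\pm$. With this freedom Lemma~\ref{splittinglemma} carries over verbatim, and so does Lemma~\ref{omegaepsilonlemma}, with the same $\epsilon\downarrow\delta$ limit and hence the same sharp constants. A side benefit is that your differentiation worry evaporates: the argument reduces to step functions and uses that $\max_m\alpha^{n,m}\to0$, so no Jessen--Marcinkiewicz--Zygmund theorem is needed.
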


The literature on $A_p$ and $RH_p$ weights is far too extensive to comprehensively cover here, but a small review is in order. The papers \cite{muckenhoupt} and \cite{coifman}, mentioned earlier, contain foundational results on these weights. Both \cite{rdf} and \cite{stein} are good references; they emphasize the connection to singular integral operators. There are several factorization results relating $RH_p$ and $A_q$ (see, e.g., \cite{cruz}), and C. J. Neugebauer, in \cite{neug}, uses these to prove that if $w \in RH_{\infty}^{\delta}$, then $w \in A_q$ for all $q > \delta$. Additionally, he provides conditions for weights in $RH_p^{\delta}$ to be in $A_q$. However, the results there depend on specific factorizations of the weights and the $A_q$ constants aren't provided. In \cite{malak}, the one-dimensional embedding of $RH_p$ into $A_q$ with the best range of $q$ is proven using rearrangements, but, again, this method doesn't find the $A_q$ constants. In \cite{basile}, the $RH_{\infty}$ embedding result, Theorem \ref{mainthm2}, is found using rearrangements; we include it here because it follows with little extra work from the proof of our $RH_p$ embedding result (Theorem \ref{mainthm}). This same group of authors finds, in \cite{basile2}, the embedding of \emph{strong} $RH_{\infty}$ into \emph{strong} $A_p$ in $n$ dimensions with the same constant. In improving upon Gehring's original result, \cite{gehring}, Korenovskii \cite{korenovskii} found the sharp upper bound on $t$ in the embedding $RH_p^{\delta} \subset RH_t$ in one dimension and Kinnunen \cite{kinnunen} found the same upper bound on for the \emph{strong} $RH_p$ classes in $n$ dimensions; however, neither of these methods provide the $RH_t$ constant of the embedded weight. Using the Bellman function technique, we are able to provide the sharp constants in one dimension and new results in $n$ dimensions, including sharp constants for all \emph{strong} $RH_p$ embeddings (the technique is explained in \cite{volberg}, especially in the context of classical analysis problems). However, finding the sharp constants for the usual $RH_p$ classes in $n$ dimensions remains an open problem.

The paper will proceed as follows: first, we describe the setup for the Bellman function technique in section \ref{bellmansection}. We use $\B(x)$ to denote the Bellman function. Then, in section \ref{thmproofsection} we prove Theorems \ref{mainthm}, \ref{mainthm2} and \ref{gehringthm} from an auxiliary theorem, Theorem \ref{auxthm}. In section \ref{Bguesssection} we explain the heuristics behind our ``guess'' at the explicit formula for the Bellman function; we call this guess $B$. We then show that our guess is correct, proving Theorem \ref{auxthm}, by verifying that $B(x) \le \B(x)$ (Lemma \ref{extremalweightlemma}) and $B(x) \ge \B(x)$ on $\Omega_{\delta}$ (Lemma \ref{omegaepsilonlemma}). Proving the former inequality requires finding a weight representing each $x$ in $\Omega_{\delta}$ (section \ref{weightsection}), and proving the latter requires working with domains $\Omega_{\epsilon}$ for $\epsilon > \delta$ (see section \ref{omegaepsilonsection}). Finally, we prove Theorems \ref{ndimthm} and \ref{strongRHpthm} in section \ref{ndimsection}. Throughout the paper, we alternate between heuristic calculations and rigorous proof to exhibit the philosophy of the Bellman function technique.

\section{Bellman function ideas}\label{bellmansection}
Typically, when one uses the Bellman function technique, all one needs is to find an upper bound for the Bellman function which preserves concavity (or convexity, as needed). However, this approach doesn't allow for the calculation of sharp constants. Consequently, we find the actual formula for the Bellman function.

For all weights $w$, for $1 < p < \infty$, and for any interval $J$, $\avg{w}^p_J \le \avg{w^p}_J$, by H\"older's inequality. Hence, if $w \in RH_p^{\delta}(I)$, the point $x = (x_1, x_2) = (\avg{w}_I, \avg{w^p}_I)$ lies in the domain
\[\Omega_{\delta}(p):= \{(x_1, x_2) :  x_1 > 0, \, x_1^p \le x_2 \le (\delta x_1)^p \}.\]
For our problem, the Bellman function for $1 < p < \infty$, $0 < q < \infty$ is
\begin{equation}
\B(x; p, q, \delta):= \sup \{\avg{w^{1-q'}}_I : x_1=\avg{w}_I, x_2=\avg{w^p}_I, \text{ and } w \in RH_p^{\delta}(I) \},
\end{equation}
and for $q = \infty$,
\begin{equation}
\B(x; p, \infty, \delta) := \sup \{\exp(-\avg{\log(w)}_I): x_1 = \avg{w}_I, x_2 = \avg{w^p}_I, w \in RH_p^{\delta}(I) \}.
\end{equation}
Note that $\B$ doesn't depend on the interval $I$ on which it is defined, since, given two intervals $I_1$ and $I_2$, the affine mapping of one onto the other preserves the averages and puts $RH_p^{\delta}(I_1)$ in one-to-one correspondence with $RH_p^{\delta}(I_2)$. We are allowing for $0 < q < 1$ in order to prove Theorem \ref{gehringthm}; for $q$ in this range, the exponent $1-q'$ is greater than 1.

For $p = \infty$, we must adjust these coordinates. We set $x = (x_1, x_2) =$ \\
$(\avg{w}_I, \esup_I w)$, whence
\begin{align*}
\Omega_{\delta}(\infty) &:= \{(x_1, x_2) :  x_1 > 0, \, x_1 \le x_2 \le \delta x_1 \},\\
\B(x; \infty, q, \delta) &:= \sup \{\avg{w^{1-q'}}_I : x_1=\avg{w}_I, x_2=\esup_I w, \text{ and } w \in RH_{\infty}^{\delta}(I) \},\\
\B(x; \infty, \infty, \delta) &:= \sup \{\exp(-\avg{\log(w)}_I): x_1 = \avg{w}_I, x_2 = \esup_I w, w \in RH_{\infty}^{\delta}(I) \}.
\end{align*}

We consider $\B$ as a function on $\Omega_{\delta}$, since each point $x \in \Omega_{\delta}$ can be represented by a weight $w \in RH_p^{\delta}$. We will demonstrate the existence of such weights in Lemma \ref{extremalweightlemma}.

We will often split an interval $J$ into the union of two disjoint subintervals which we will call $J^-$ and $J^+$, with $|J^{\pm}| = \alpha^{\pm}|J|$. Given a weight $w$ defined on $J$, we split it into two weights $w^{\pm}$ defined on their respective subintervals. As above, we relate these weights to points in $\R^2$, letting the point $x^0$ correspond to the original weight $w$ and the points $x^{\pm}$ correspond to $w^{\pm}$. These points are co-linear: $x^0 = \alpha^- x^- + \alpha^+ x^+$. Also, if we start with $w \in RH_p^{\delta}(J)$, $w^{\pm}$ are in $RH_p^{\delta}(J^{\pm})$; consequently, $x^0$, $x^-$ and $x^+$ are all points in $\Omega_{\delta}$.

We need some further notation. For $1 < p < \infty$, denote by $u_p^{\pm}$ the functions inverse to
\[
t \to \frac{(1-pt)^{p-1}}{(1-(p-1)t)^p}
\]
on the following domains: $u_p^+:[0,1] \to [0,\frac{1}{p}]$, $u_p^-:[0,1] \to (-\infty, 0]$, i.e., the values $u_p^{\pm}(t)$ are the positive and negative solutions to the equation $\frac{(1-pu)^{p-1}}{(1-(p-1)u)^p} = t$ for $0 \le t \le 1$. Based on this, we define $s^{\pm} = s_p^{\pm}(\delta) := u_p^{\pm}(1/\delta^p)$ and $r^{\pm} = r^{\pm}_p(x, \delta) := u_p^{\pm}(x_2/(\delta x_1)^p)$. Finally, we set $\gamma := p + q' -1$.

Theorems \ref{mainthm}, \ref{mainthm2} and \ref{gehringthm} are consequences of the following
\begin{thm}\label{auxthm}
For $0 < q < \infty$ and $1 < p < \infty$, if $x_2 = x_1^p$ (or if $p = \infty$ and $x_2 = x_1$), we have
\[
\B(x; p, q, \delta) = x_1^{1-q'}.
\]
If $1 < p < \infty$ and $x_2 > x_1^p$, then
\begin{equation}\label{bellmanfn}
\B(x;p,q, \delta) = \qquad \qquad
\end{equation}
\[
\begin{cases}
x_1^{1-q'}\left(\frac{1-ps^+}{1-pr^+}\right)^{q'}\left(\frac{1-(p-1)r^+}{1-(p-1)s^+}\right)^{q'-1}\left(\frac{1-\gamma r^+}{1-\gamma s^+}\right) & \text{ for } q > \frac{1-(p-1)s^+}{1-ps^+} \\
\infty & \text{ for } \frac{1-(p-1)s^-}{1-ps^-} \le q \le \frac{1-(p-1)s^+}{1-ps^+}\\
x_1^{1-q'}\left(\frac{1-ps^-}{1-pr^-}\right)^{q'}\left(\frac{1-(p-1)r^-}{1-(p-1)s^-}\right)^{q'-1}\left(\frac{1-\gamma r^-}{1-\gamma s^-}\right) & \text{ for } \frac{p-1}{p} < q < \frac{1-(p-1)s^-}{1-ps^-}.\\
\end{cases}
\]
If $p = \infty$ and $x_2 > x_1$, then
\begin{equation}\label{pinftybellmanfn}
\B(x; \infty, q, \delta) =
\begin{cases}
x_2^{1-q'}\left(\frac{q-\frac{x_1}{x_2}\delta}{q-\delta}\right) & \text{ for } q > \delta \\
\infty & \text{ for } 1 < q \le \delta.\\
\end{cases}
\end{equation}
\end{thm}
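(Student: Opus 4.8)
The plan is to prove Theorem \ref{auxthm} by the standard two-sided Bellman argument: establish that $\B$ satisfies a "main inequality" (a concavity-type condition coming from splitting intervals) together with an obstacle/boundary condition, then identify the explicit candidate $B$ in \eqref{bellmanfn}--\eqref{pinftybellmanfn} and show $B \equiv \B$ by proving both $\B \le B$ and $\B \ge B$. For the boundary case $x_2 = x_1^p$ (or $x_2 = x_1$ when $p=\infty$), H\"older's inequality forces $w$ to be constant a.e. equal to $x_1$ on $I$, so $\avg{w^{1-q'}}_I = x_1^{1-q'}$ and $\exp(-\avg{\log w}_I) = x_1^{-1}$; this pins down $\B$ on the lower boundary parabola immediately and supplies the initial condition for everything else. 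The bulk of the work is the interior formula.

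First I would record the structural properties of $\B$ as a function on $\Omega_\delta$. From the splitting $x^0 = \alpha^- x^- + \alpha^+ x^+$ with $w^\pm \in RH_p^\delta(J^\pm)$ and the fact that $\avg{w^{1-q'}}_{J^0} = \alpha^-\avg{(w^-)^{1-q'}}_{J^-} + \alpha^+\avg{(w^+)^{1-q'}}_{J^+}$ (and similarly with $\exp(-\avg{\log w})$ becoming a weighted geometric mean — here one must be slightly careful and pass to $-\avg{\log w}$ which is genuinely additive), taking suprema gives the super-additivity/concavity inequality $\B(x^0) \ge \alpha^- \B(x^-) + \alpha^+ \B(x^+)$ for all admissible splittings; conversely any function $G$ on $\Omega_\delta$ that is concave in this sense, satisfies the boundary condition, and dominates the "one-step" values must dominate $\B$. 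The homogeneity $\B(\lambda x_1, \lambda^p x_2; p, q, \delta) = \lambda^{1-q'}\B(x; p,q,\delta)$ (from scaling $w \mapsto \lambda w$) reduces the problem to a one-variable question along, say, the slice $x_1 = 1$, which is why the formula factors as $x_1^{1-q'}$ times a function of $x_2/(\delta x_1)^p$ encoded through $r^\pm$.

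For the upper bound $\B \le B$: I would verify that the explicit $B$ is concave in the required sense on $\Omega_\delta$ and meets the boundary condition on $x_2 = x_1^p$, and additionally that $B$ is finite precisely on the stated $q$-range (the middle case returning $\infty$). Concavity is checked by computing the Hessian of $B$ — or, more cleverly, by exhibiting $B$ as built from the one-parameter family of "extremal" weights: the substitution via $u_p^\pm$ is engineered so that along the extremal trajectories the main inequality degenerates to an equality, which is the usual signal that the guess is right. This is where I expect the real obstacle: verifying the sign of the relevant second-derivative form across the whole domain, handling the three regimes (the two finite branches governed by $r^+$ versus $r^-$, and the infinite middle strip), and checking that the pieces glue together $C^1$-ly along the curves $q = \frac{1-(p-1)s^\pm}{1-ps^\pm}$. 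The appearance of $\gamma = p + q' - 1$ and the constraints $1 - \gamma r^\pm$, $1 - ps^\pm$ etc. means one must track numerous sign conditions, and the case $0 < q < 1$ (where $1 - q' > 1$, needed for Gehring) must be carried along in parallel without special pleading.

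For the lower bound $\B \ge B$: this is the content of the forward-referenced Lemma \ref{extremalweightlemma} together with the $\Omega_\epsilon$ argument of Lemma \ref{omegaepsilonlemma}. Concretely, for each $x$ in the interior I would construct an explicit weight $w_x \in RH_p^\delta(I)$ — a monotone "power-type" weight whose averages realize $x = (\avg{w}_I, \avg{w^p}_I)$ and for which $\avg{w_x^{1-q'}}_I$ equals the claimed $B(x)$ (in the middle $q$-range, a sequence of weights whose $\avg{w^{1-q'}}_I \to \infty$, exhibiting the blow-up). The natural device is to let the weight trace out one of the extremal trajectories described above; the parameters $s^\pm = u_p^\pm(1/\delta^p)$ are exactly the "corner" values at which the $RH_p^\delta$ constraint is saturated. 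The $p = \infty$ case is simpler: there $\Omega_\delta(\infty)$ is a cone, $B$ is a ratio that is manifestly concave where finite, and the extremal weights are two-step (bang-bang) weights taking the values $x_2$ and a smaller constant, so \eqref{pinftybellmanfn} follows by a direct optimization over the splitting point. Finally, the passage from the auxiliary theorem to Theorems \ref{mainthm}, \ref{mainthm2}, \ref{gehringthm} is pure bookkeeping: $\avg{w}_J \avg{w^{1-q'}}_J^{q-1} \le x_1 \B(x)^{q-1}$, maximized over the vertical segment $x_1^p \le x_2 \le (\delta x_1)^p$, produces $C_q(p,\delta)$, with the maximizer landing at the upper endpoint $x_2 = (\delta x_1)^p$ and the definition \eqref{qstardef} of $q^*$ emerging as the threshold where that maximum becomes finite.
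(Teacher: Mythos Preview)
Your overall two-sided architecture is right, but you have misassigned the role of the $\Omega_\epsilon$ argument, and in doing so you have left a genuine gap in the upper bound $\B \le B$. The extremal-weight construction (Lemma \ref{extremalweightlemma}) \emph{by itself} gives $\B \ge B$: one builds a specific $w_x \in RH_p^\delta$ with $\avg{w_x^{1-q'}} = B(x)$, and $\B(x) \ge B(x)$ follows directly from the definition of the supremum. The $\Omega_\epsilon$ device (Lemmas \ref{splittinglemma} and \ref{omegaepsilonlemma}) is needed for the \emph{other} direction, and here your plan does not close. You propose to get $\B \le B$ by verifying that $B$ is concave on $\Omega_\delta$ and then running the splitting inequality. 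But when you split $w \in RH_p^\delta(J)$ at an arbitrary point, the halves $w^\pm$ are indeed in $RH_p^\delta$, so $x^\pm \in \Omega_\delta$; however, the \emph{segment} $[x^-, x^+]$ need not remain inside $\Omega_\delta$, because the upper boundary $x_2 = (\delta x_1)^p$ is convex. Concavity of $B$ along a chord that exits the domain is useless. The paper's fix is Lemma \ref{splittinglemma}: for every $\epsilon > \delta$ one can choose the splitting point (uniformly bounded away from the endpoints) so that the whole chord lies in $\Omega_\epsilon$; then concavity of $B(\,\cdot\,; p, q, \epsilon)$ on $\Omega_\epsilon$ plus a dyadic iteration on step functions yields $\B(x; p, q, \delta) \le B(x; p, q, \epsilon)$, after which one sends $\epsilon \downarrow \delta$ using continuity of $B$ in $\delta$. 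You mention the $\Omega_\epsilon$ argument but attach it to the wrong inequality; without it in the right place, the upper bound has no proof.

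Two smaller corrections. For $p = \infty$ the concavity is \emph{not} manifest: the second coordinate is $\esup_J w$, not an average, so under splitting one has $x_2^0 = \max\{x_2^-, x_2^+\}$ rather than a convex combination, and the required inequality is a nonstandard ``concavity'' handled separately in Case 2 of Lemma \ref{concavitylemma} via $\partial B/\partial x_2 \ge 0$ together with linearity in $x_1$. And the extremal weights for $p = \infty$ are not two-valued bang-bang step functions; they are power weights $c(t/a)^{\delta - 1}$ on $[0,a]$, constant on $[a,1]$, as in \eqref{extweight2}. Finally, the ``gluing $C^1$-ly along the curves $q = \frac{1-(p-1)s^\pm}{1-ps^\pm}$'' is a misreading: those are thresholds in the parameter $q$, not interfaces in $x$-space, so for a fixed $q$ only one branch of the formula is in play and there is nothing to patch.
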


\section{Proof of Theorems \ref{mainthm}, \ref{mainthm2} and \ref{gehringthm}}\label{thmproofsection}
\begin{proof} One can easily check that the value $q^*(p,\delta)$ defined by \eqref{qstardef} and used in Theorem \ref{mainthm} is the same as $\frac{1-(p-1)s^+}{1-ps^+}$ used in Theorem \ref{auxthm}. Similarly, if we define $q_* = q_*(p,\delta)$ to be the unique solution to \eqref{qstardef} between 0 and 1, $q_* = \frac{1-(p-1)s^-}{1-ps^-},$ the other bound in Theorem \ref{auxthm}. The critical exponent in Theorem \ref{gehringthm}, $t^*$, satisfies $t^* = 1 - q_*' = \frac{1}{1-q_*}$.

Now, we assume Theorem \ref{auxthm}. We consider $q > 1$ for Theorems \ref{mainthm} and \ref{mainthm2} and $\frac{p-1}{p} < q < 1$ for Theorem \ref{gehringthm} (in this range, $1-q' > p$, which is all that we are interested in). Recall that $\B(x)$ represents the maximum of $\avg{w^{1-q'}}$ for all weights in $RH_p^{\delta}$ which are represented by $x$ and that $x_1$ represents $\avg{w}$. For $q > 1$, the constant we desire is
\[
\sup_{x \in \Omega_{\delta}} x_1 \big(\B(x; p, q, \delta)\big)^{q-1},
\]
and for $\frac{p-1}{p} < q < 1$, we seek
\[
\sup_{x \in \Omega_{\delta}} \frac{1}{x_1}\B(x; p, q, \delta)^{1-q}.
\]
With that in mind, we define $g := x_1^{q'-1}\B(x)$, that is,
\[
g = \left(\frac{1-ps^{\pm}}{1-pr^{\pm}}\right)^{q'}\left(\frac{1-(p-1)r^{\pm}}{1-(p-1)s^{\pm}}\right)^{q'-1}\left(\frac{1-\gamma r^{\pm}}{1-\gamma s^{\pm}}\right).
\]
$g \ge 0$ and
\[
\frac{d}{dr^{\pm}} \log(g) = \frac{-q'(q'-1)r^{\pm}}{(1-pr^{\pm})(1-(p-1)r^{\pm})(1-\gamma r^{\pm})}.
\]
For $q > 1$, we use $r^+$ and $s^+$, so $g'$ is negative. For $\frac{p-1}{p} < q < 1$, we use $r^-$ and $s^-$, so $g'$ is positive. Hence, the maximum of $g$ in both cases is at $r^{\pm} = 0$. So, our best constant is
\begin{align*}
\left(g(0)\right)^{q-1} &= \frac{(1-ps^+)^{q}}{(1-(p-1)s^+)(1-\gamma s^+)^{q-1}}, \text{ for } q > 1 \text{, and}\\
\left(g(0)\right)^{1-q} &= \frac{1-(p-1)s^-}{(1-ps^-)^{q}(1-\gamma s^-)^{1-q}} \text{ for } \frac{p-1}{p} < q < 1.
\end{align*}
Relating this first constant back to $q^*$, we find
\begin{align*}
\frac{(1-ps^+)^{q}}{(1-(p-1)s^+)(1-\gamma s^+)^{q-1}} &= \frac{1}{q^*}\left(1 + \frac{s^+(1-q')}{1-ps^+}\right)^{1-q} \\
& = \frac{1}{q^*} \left(1 - \frac{q^* - 1}{q-1}\right)^{1-q} = \frac{1}{q^*} \left(\frac{q-1}{q - q^*}\right)^{q-1},
\end{align*}
which is the constant in Theorem \ref{mainthm}. For the second constant, $\frac{p-1}{p} < q < 1$ and we use $t = 1- q'$ and $t^* = 1-q_*'$ to see
\begin{align*}
\frac{(1-(p-1)s^-)}{(1-ps^-)^{q}(1-\gamma s^-)^{1-q}} &= q_* \left(\frac{q - q_*}{q-1}\right)^{q-1} = \frac{t^* - 1}{t^*} \left((1-t) + t(\frac{t^*-1}{t^*})\right)^{-1/t} \\
&= \frac{t^* - 1}{t^*} \left(\frac{t^*}{t^* - t}\right)^{1/t},
\end{align*}
which is the constant in Theorem \ref{gehringthm}.

To complete the proof for $p < \infty$, fix a point $x \in \Omega_{\delta}$ (fixing $r^{\pm}= r^{\pm}(x)$). Then, the weight
\begin{equation}\label{extremalweight}
w_{c,a,\nu}(t) =
\begin{cases}
c\left(\frac{t}{a}\right)^{\nu}& \text{ if } 0 \le t \le a \\
c& \text{ if } a \le t \le 1,
\end{cases}
\end{equation}
with constants $\nu = \frac{s^{\pm}}{1-ps^{\pm}},$ $a = \frac{s^{\pm}-r^{\pm}}{s^{\pm}(1-pr^{\pm})},$ $c = x_1 \frac{(1-pr^{\pm})(1-(p-1)s^{\pm})}{(1-(p-1)r^{\pm})(1-ps^{\pm})},$ is in $RH_p^{\delta}(I)$ and its $A_q$ norm is infinite for any $q_* \le q \le q^*$. This is exhibited in the proof of Lemma \ref{extremalweightlemma}.

For the case of $p = \infty$, the analysis is even easier. Given the definition of $\B$ in \eqref{pinftybellmanfn}, we see that for $q > \delta$,
\[
x_1 \big(\B(x; p, q, \delta)\big)^{q-1} = \frac{x_1}{x_2} \left(\frac{q-\frac{x_1}{x_2}\delta}{q-\delta}\right)^{q-1}.
\]
Letting $y = \frac{x_1}{x_2}$, we know that $\frac{1}{\delta} \le y \le 1$, and we see
\[
\frac{d}{dy} \left[y \left(\frac{q-y\delta}{q-\delta}\right)^{q-1}\right] =  q(1-\delta y)\frac{(q-y\delta)^{q-2}}{(q-\delta)^{q-1}},
\]
which is negative for $y > 1/\delta$. Thus, the maximum is at $y = 1/\delta$, which is exactly the constant in Theorem \ref{mainthm2}. For $q \le \delta$, we again fix an $x \in \Omega_{\delta}$. Then, the weight in \eqref{extremalweight}, with constants $\nu = \delta - 1$, $a = \frac{1-x_1/x_2}{1-1/\delta}$ and $c=x_2$, is in $RH_{\infty}^{\delta}(I)$ with infinite $A_q$ norm for $1 \le q \le \delta$, which completes the proof. As before, this is contained in the proof of Lemma \ref{extremalweightlemma}.

Finally, we address the case of $q=\infty$. Define, for $1 < p < \infty$,
\begin{align}\label{qinftybellman}
B(x; p, \infty, \delta) &:= \lim_{q \to \infty} \left(\B(x;p,q,\delta)\right)^{q-1}\\
&= \frac{1}{x_1} \frac{(1-(p-1)r^+)(1-ps^+)}{(1-pr^+)(1-(p-1)s^+)} \exp[\frac{s^+-r^+}{(1-ps^+)(1-pr^+)}] \notag,
\end{align}
and, for $p = \infty$,
\begin{equation}\label{pqinftybellman}
B(x; \infty, \infty, \delta) := \lim_{q \to \infty} \left(\B(x;\infty,q,\delta)\right)^{q-1} = \frac{1}{x_2} \exp[\delta(1-\frac{x_1}{x_2})].
\end{equation}
We want to establish that these functions satisfy
\[
B(x; p, \infty, \delta) = \B(x; p, \infty, \delta) \quad \text{and} \quad B(x; \infty, \infty, \delta) = \B(x; \infty, \infty, \delta).
\]
First, it is not difficult to check that the weights $w_p$ and $w_{\infty}$ defined by \eqref{extremalweight} with the respective constants for $p < \infty$, $p=\infty$ do not depend on $q$ and satisfy
\[
\exp(-\avg{\log(w_p)}) = B(x; p, \infty, \delta) \quad \text{ and } \quad \exp(-\avg{\log(w_{\infty})}) = B(x; \infty, \infty, \delta),
\]
respectively. By the definition of $\B$, this gives the inequalities $B(x; p, \infty, \delta) \le \B(x; p, \infty, \delta)$ and $B(x; \infty, \infty, \delta) \le \B(x; \infty, \infty, \delta)$. The other inequality is a result of applying Jensen's inequality; namely,
\[
\exp(-\avg{\log(w)}) \le \avg{w^{1-q'}}^{q-1} \le \B(x; p, q, \delta)^{q-1}.
\]
So in \eqref{qinftybellman} and \eqref{pqinftybellman}, taking the limits establishes the desired equality of $B$ and $\B$.

Given this, the $A_{\infty}$ constants in Theorems \ref{mainthm} and \ref{mainthm2} are easy to find. We simply calculate
\[
\sup_{x \in \Omega_{\delta}} x_1 \B(x; p, \infty, \delta) = \sup_{x \in \Omega_{\delta}} \frac{(1-(p-1)r^+)(1-ps^+)}{(1-pr^+)(1-(p-1)s^+)} \exp[\frac{s^+-r^+}{(1-ps^+)(1-pr^+)}].
\]
Again, the maximum is at $r^+ = 0$, whence the constant is
\[
\frac{(1-ps^+)}{(1-(p-1)s^+)} \exp[\frac{s^+}{(1-ps^+)}] = \frac{1}{q^*} \exp(q^* - 1).
\]
And,
\[
\sup_{x \in \Omega_{\delta}} x_1 \B(x; \infty, \infty, \delta) = \sup_{x \in \Omega_{\delta}} \frac{x_1}{x_2} \exp[\delta(1-\frac{x_1}{x_2})] = \frac{1}{\delta} \exp(\delta - 1).
\]
This completes the proof of Theorems \ref{mainthm}, \ref{mainthm2} and \ref{gehringthm} from Theorem \ref{auxthm}.
\end{proof}

\section{Deriving the formula for $\B$}\label{Bguesssection}
We start by examining the scaling properties of $\B$. Given $w \in RH_p^{\delta}$, and $\lambda > 0$ a constant, then $\tilde{w} := \lambda w$ is in $RH_p^{\delta}$ as well, and $\avg{\tilde{w}}_I = \lambda\avg{w}_I$, $\avg{\tilde{w}^p}_I = \lambda^p\avg{w^p}_I$. Consequently, $\B(\lambda x_1, \lambda^p x_2) = \lambda^{1-q'}\B(x_1, x_2)$. Letting $x_1 = \frac{1}{\lambda}$, we see $\B(x_1,x_2)=x_1^{1-q'}\B(1,\frac{x_2}{x_1^p})$. Thus, we define $g(y) := \B(1, y)$, and we see that $\B(x_1,x_2) = x_1^{1-q'} g(\frac{x_2}{x_1^p})$.

For $0 < q < \infty$, $\avg{w^{1-q'}}_I \ge \avg{w}^{1-q'}_I$, using H\"older's inequality, whence, \\
$\B(x_1, x_2) \ge x_1^{1-q'}$. Therefore, $g(y) \ge 1$. Further, if $x_2 = x_1^p$, that is, if $\avg{w^p}_I = \avg{w}^p_I$, the weight $w$ must be a constant. In that case, $\B(x) = x_1^{1-q'}$, and we see that $g(1)=1$.

We expect $\B$ to be a concave function, as the following illustrates. Given an interval $J$, split it into the disjoint union of subintervals $J^-$ and $J^+$. Assuming they exist, let $w^{\pm}$ be two extremal weights (i.e., which satisfy $\avg{(w^{\pm})^{1-q'}}_{J^{\pm}} = \B(x^{\pm})$). Then, concatenate these two weights to form a new weight $w$ on $J$. Thus, $\avg{w^{1-q'}}_J = \alpha^- \avg{(w^-)^{1-q'}}_{J^-} + \alpha^+ \avg{(w^+)^{1-q'}}_{J^+}$. The weight $w$ corresponds to the point $x^0$, and $w^{\pm}$ to $x^{\pm}$. Then, $x^0 = \alpha^- x^- + \alpha^+ x^+$, and we have $\B(x^0) \ge \alpha^-\B(x^-) + \alpha^+\B(x^+)$, which is the concavity condition (alternatively, using the terminology of \cite{volberg}, we expect a concave solution since the profit function is zero and there is no drift term). We ignore the substantive issues of whether extremal weights exist, and whether $x^0 \in \Omega_{\delta}$ due to the heuristic nature of our procedure. However, later proofs lay these concerns to rest. We will also assume that the Hessian of $\B$ is singular. This last assumption gives rise to an ODE that we can solve, which enables us to find $\B$ explicitly. This assumption is frequently made in the application of the Bellman function technique; here it is reasonable because we expect extremal weights to exist.

To proceed further, we must calculate the Hessian of $\B$ in terms of $g$ (assuming, of course, that $\B$ is sufficiently differentiable). Let $y = \frac{x_2}{x_1^p}$. Then,
\begin{gather*}
%\frac{\partial y}{\partial x_1} = -p \frac{y}{x_1}, \qquad \frac{\partial y}{\partial x_2} = x_1^{-p} = \frac{y}{x_2}, \\
%\frac{\partial \B}{\partial x_1} = x_1^{-q'} [(1-q')g - pyg'], \qquad \frac{\partial \B}{\partial x_2} = x_1^{1-q'-p}g', \\
\frac{\partial^2 \B}{\partial x_1^2} = x_1^{-q'-1} [q'(q'-1)g + (2q'+p-1)pyg' + p^2y^2g''], \\
\frac{\partial^2 \B}{\partial x_1 \partial x_2} = x_1^{-q'-p}[(1-q'-p)g'-pyg''], \quad \text{and} \quad \frac{\partial^2 \B}{\partial x_2^2} = x_1^{1-q'}x_2^{-2}y^2g''.
\end{gather*}
Thus, \begin{equation} \text{Hess}(\B) =
\begin{pmatrix}
\frac{\partial^2 \B}{\partial x_1^2} & \frac{\partial^2 \B}{\partial x_1 \partial x_2} \\
\frac{\partial^2 \B}{\partial x_2 \partial x_1} & \frac{\partial^2 \B}{\partial x_2^2}
\end{pmatrix}
= x_1^{-q'-1} \begin{pmatrix} 1 & 1 \\ 0 & -\frac{x_1}{px_2}\\ \end{pmatrix}
R \begin{pmatrix} 1 & 0 \\ 1 & -\frac{x_1}{px_2} \\ \end{pmatrix}
\end{equation}
with
\begin{equation}
R = \begin{pmatrix} q'(q'-1)g-p(p-1)yg' & p\gamma yg' \\ p\gamma yg' & p^2y^2g'' \\ \end{pmatrix},
\end{equation}
where $\gamma = p+q'-1$.

To force $R$ to be singular, we require
\begin{equation}
(p^2y^2g'')(q'(q'-1)g - p(p-1)yg') = (p\gamma y g')^2.
\end{equation}
Now, we make the substitution $h = \frac{yg'}{g}$. Then $g'' = \frac{gh' + hg' - g'}{y}$. Convert and divide both sides by $(pg)^2$ to get
\[(yh' - h + h^2)(q'(q'-1) - p(p-1)h) = \gamma^2 h^2,\] which is separable. %Thus,
%\[ dh \left( \frac{1}{h} - \frac{p^2}{q'-1+ph} + \frac{(p-1)^2}{q'+(p-1)h} \right) = \frac{dy}{y}.\]
So, \[ y = C \left( \frac{h((p-1)h+q')^{p-1}}{(ph+q'-1)^p} \right).\]
We make one further change of variables, \[h = \frac{q'(q'-1)}{p(p-1)(1-\gamma r)} \qquad \text{ or } \qquad r = \frac{1}{\gamma} - \frac{q'(q'-1)}{\gamma p(p-1)h},\]
which yields
\begin{equation}\label{upeqn}
y = C \frac{(1-pr)^{p-1}}{(1-(p-1)r)^p}.
\end{equation}
It turns out to be natural to choose $C = \delta^p$ in \eqref{upeqn}, and it is at this point that we see the origin of the function $u_p$, mentioned above. Recall that we have set $u_p^{\pm}$ as the positive and negative inverses of the function
\[ t \to \frac{(1-pt)^{p-1}}{(1-(p-1)t)^p}. \] Also recall that $s^{\pm} = u_p^{\pm}(\frac{1}{\delta^p})$. We can see from \eqref{upeqn}, with $C = \delta^p$, that $r^{\pm} = u_p^{\pm}(\frac{x_2}{(\delta x_1)^p})$. Note also that if $r = s$, then $y = 1$.

We want to relate this all back to $g$, so we calculate
\[ \frac{d}{dr}\log(y) = \frac{-p(p-1)}{1-pr} + \frac{p(p-1)}{1-(p-1)r}\] and use this to see that
\begin{align*}
d(\log(g)) &= h d(\log(y)) = \frac{q'(q'-1)}{p(p-1)(1-\gamma r)} \left(\frac{-p(p-1)}{1-pr} + \frac{p(p-1)}{1-(p-1)r} \right) \\
&= \frac{pq'}{1-pr} - \frac{\gamma}{1-\gamma r} - \frac{(p-1)(q'-1)}{1-(p-1)r}.
\end{align*}

Since $g |_{y=1} = g|_{r=s} = 1$, we have
\begin{align*}
\log g &= \int_s^r \left( \frac{pq'}{1-pt} - \frac{\gamma}{1-\gamma t} - \frac{(p-1)(q'-1)}{1-(p-1)t} \right) \, dt \\
&= -q'\log(\frac{1-pr}{1-ps}) + (q'-1)\log(\frac{1-(p-1)r}{1-(p-1)s}) + \log(\frac{1-\gamma r}{1-\gamma s}),
\end{align*}
whence
\begin{equation}
g = \left(\frac{1-ps}{1-pr}\right)^{q'}\left(\frac{1-(p-1)r}{1-(p-1)s}\right)^{q'-1}\left(\frac{1-\gamma r}{1-\gamma s}\right).
\end{equation}
The last thing is to discover whether we should use $r^+$ or $r^-$ in the definition of $g$ to ensure that $R$ is negative semi-definite. Since $R$ is singular and symmetric, it suffices to make the upper left-hand entry of $R$ negative. That is, we must make sure that
\[ p(p-1)y g' \ge q'(q'-1)g \quad \text{ or } \quad h \ge \frac{q'(q'-1)}{p(p-1)}. \]
Note that \[ \frac{dh}{dr} = \frac{q'(q'-1)}{p(p-1)} \gamma (1- \gamma r)^2,\]
which is positive for $q > 1$ and negative for $\frac{p-1}{p} < q < 1$. Also, $h(0) = \frac{q'(q'-1)}{p(p-1)}$, so we need $h(r) \ge h(0)$ for $q > 1$, which is accomplished by choosing the positive solution $r^+$. Accordingly, for $\frac{p-1}{p} < q < 1$, we use $r^-$.

Therefore, we have a candidate for $\B$: $B(x) = x_1^{1-q'}g(\frac{x_2}{x_1^p})$. It is occasionally helpful to have this expressed in two different ways, which we record here
\begin{align}
B(x) &= x_1^{1-q'}\left(\frac{1-ps^{\pm}}{1-pr^{\pm}}\right)^{q'}\left(\frac{1-(p-1)r^{\pm}}{1-(p-1)s^{\pm}}\right)^{q'-1}\left(\frac{1-\gamma r^{\pm}}{1-\gamma s^{\pm}}\right) \label{B1} \\
B(x) &= x_1^{-\gamma}x_2\left(\frac{1-ps^{\pm}}{1-pr^{\pm}}\right)^{\gamma}\left(\frac{1-(p-1)r^{\pm}}{1-(p-1)s^{\pm}}\right)^{\gamma}\left(\frac{1-\gamma r^{\pm}}{1-\gamma s^{\pm}}\right) \label{B2}.
\end{align}
The second representation is obtained by using the definitions of $r^{\pm}$ and $s^{\pm}$ to see that
\[\frac{x_2}{x_1^p} = \left(\frac{1-pr^{\pm}}{1-ps^{\pm}}\right)^{p-1}\left(\frac{1-(p-1)s^{\pm}}{1-(p-1)r^{\pm}}\right)^{p}.\]
We note that since $s^- \le r^- \le 0 \le r^+ \le s^+ < 1/p$, the only concern with the denominator of $g$ occurs when $(1- \gamma s^{\pm}) =0$. At this point, we have $q' = 1/s^{\pm} + 1 - p$, or $q = \frac{1-(p-1)s^{\pm}}{1-ps^{\pm}}$, which is $q^*$ (or $q_*$), the critical values of $q$ in Theorems \ref{mainthm} and \ref{auxthm}.

For the case of $p=\infty$, we have two options. Either we can take limits, using the asymptotics as $p \to \infty$ of
\begin{equation}\label{asymptotics}
s^+ \sim \frac{1}{p} - \frac{1}{(\delta - 1)p^2} \quad \textrm{and} \quad r^+ \sim \frac{1}{p} - \frac{1}{(\delta\tilde{x}-1)p^2},
\end{equation}
where $\tilde{x} := \frac{x_1}{x_2^{1/p}} \to \frac{x_1}{x_2}$. Or, we can carry out a similar analysis. We leave the former approach to the reader and illustrate the latter approach, as the ideas involved are useful for later. We start by recalling that for a weight $w$ and an interval $J$, $x_2 = \esup_{J} w$. This change of coordinates alters the effect of splitting; if we split an interval $J$ into two parts, $J = J^- \cup J^+$, the point $x^0 = (\avg{w}_J, \esup_J w)$ is no longer necessarily co-linear with the points $x^{\pm} = (\avg{w}_{J^{\pm}}, \esup_{J^{\pm}} w)$. The first coordinate splits proportionally, $x_1^0 = \alpha^- x^-_1 + \alpha^+ x^+_1$, but the second coordinate now satisfies $x^0_2 = \textrm{max}\{x^-_2, x^+_2\}$. Nevertheless, we are still seeking a concavity condition as before, that is, we want to ensure that $\B(x^0) \ge \alpha^- \B(x^-) + \alpha^+ \B(x^+).$ However, this is not typical concavity, due to the behavior of the coordinates. To get our hands on an expression for the concavity, we look at the Taylor series for $\B$ based at $x^0$ up to the second terms (assuming $\B$ is sufficiently differentiable):
\begin{equation}
\B(x^{\pm}) \simeq \B(x^0) + \sum_{i=1}^2 \frac{\partial \B}{\partial x_i}(x^0)(x^{\pm}_i-x^0_i) + \frac{1}{2} \sum_{i,j=1}^2 \frac{\partial^2\B}{\partial x_i \partial x_j}(x^0)(x^{\pm}_i - x^0_i)(x^{\pm}_j - x^0_j).
\end{equation}
We know one of $x^{\pm}_2$ is equal to $x^0_2$, so we assume that $x^+_2 = x^0_2$; due to the symmetry of the situation, we lose no generality in this assumption. Recall that $x^0_1 = \alpha^-x^-_1 + \alpha^+x^+_1$, and define $\Delta_1 := x^+_1 - x^-_1$ and $\Delta_2 := x^0_2 - x^-_2 \ge 0$. Then, for concavity, we want the following linear combination of these terms to be non-positive for small $\Delta_1$ and $\Delta_2$:
\begin{align}\label{strangeconcavity}
\alpha^-\B(x^-) &+ \alpha^+\B(x^+) - \B(x^0) \simeq \notag \\
& \alpha^- \left( -\frac{\partial \B}{\partial x_2}\Delta_2 + \frac{1}{2}\alpha^+ \frac{\partial^2 \B}{\partial x_1^2}\Delta_1^2 + \alpha^+ \frac{\partial^2\B}{\partial x_1 \partial x_2}\Delta_1\Delta_2 + \frac{1}{2} \frac{\partial^2\B}{\partial x_2^2}\Delta_2^2 \right).
\end{align}

Assume that Hess$(\B)$ is singular, that $\frac{\partial \B}{\partial x_2} \ge 0$ and that $\frac{\partial^2 \B}{\partial x_1^2} \le 0$; we will then demonstrate that \eqref{strangeconcavity} is non-positive. From the assumptions that Hess$(\B)$ is singular and $\frac{\partial^2 \B}{\partial x_1^2} \le 0$, we know that $\frac{\partial^2 \B}{\partial x_2^2} \le 0$ and that the quadratic form of Hess$(\B)$ is negative semi-definite, hence
\[\frac{\partial^2 \B}{\partial x_1 \partial x_2} \Delta_1 \Delta_2 \le -\frac{1}{2}\left( \frac{\partial^2 B}{\partial x_1^2}\Delta_1^2 + \frac{\partial^2 \B}{\partial x_2^2}\Delta_2^2\right).\]
Therefore, the right-hand side of \eqref{strangeconcavity} is less than or equal to
\[ \alpha^- \left(- \frac{\partial \B}{\partial x_2}\Delta_2 + \frac{1}{2} (1- \alpha^+)\frac{\partial^2 \B}{\partial x_2^2}\Delta_2^2\right),\] which is non-positive.
To get the differential equation which defines $B$, we supplement these conditions with yet another singularity assumption and arrive at two possibilities
\begin{align}
\frac{\partial^2 \B}{\partial x_1^2} = 0 &\qquad \text{and} \qquad \frac{\partial \B}{\partial x_2} \ge 0, \quad \text{or} \label{de1}\\
\frac{\partial \B}{\partial x_2} = 0 &\qquad \text{and} \qquad \frac{\partial^2 \B}{\partial x_1^2} \le 0. \label{de2}
\end{align}

We then turn to the scaling of $\B$. In this case, we see $\B(\lambda x_1, \lambda x_2) =$ \\
$\lambda^{1-q'}\B(x_1,x_2)$, so setting $\lambda = 1/x_1$ yields $\B(x_1,x_2) = x_1^{1-q'}\B(1,\frac{x_2}{x_1})$. We then define $g(y) = \B(1,y)$, so that $\B(x_1,x_2) = x_1^{1-q'}g(\frac{x_2}{x_1})$. Also, as before, we know that $\B(x) \ge x_1^{1-q'}$, whence $g(y) \ge 1$. Further, $\frac{x_2}{x_1} = 1$ only occurs when the weight is constant, in which case we have $\B(x_1, x_1; q, \delta) = x_1^{1-q'}$; that is, $g(1)=1$.

If $\B$ solves \eqref{de1}, then $\B$ is linear in $x_1$, which yields
\[\B(x_1, x_2) = a(x_2) + b(x_2)x_1 = x_1^{1-q'}g(x_2/x_1).\] Since $g(1)=1$, we set $x_1=x_2$ and find
\[a(x_2) = x_2^{1-q'} - b(x_2)x_2.\] Then substitute this in and multiply both sides by $x_2^{q'-1}$ to get
\[1 + x_2^{q'}b(x_2)\left(\frac{x_2}{x_1}-1\right) = \left(\frac{x_2}{x_1}\right)^{q'-1} g(\frac{x_2}{x_1}). \text{ So,}\]
\[x_2^{q'}b(x_2) = \frac{(\frac{x_2}{x_1})^{q'-1}g(\frac{x_2}{x_1}) - 1}{\frac{x_1}{x_2} - 1} = c, \] a constant. Then,
\begin{equation}\label{Bformula}
\B(x) = x_2^{1-q'}\left(1 - c + c \frac{x_1}{x_2}\right).
\end{equation}
If we let $y = \frac{x_2}{x_1}$, then $y \in [1,\delta]$ and we see $g(y) = y^{-q'}[(1-c)y + c]$. Next, \[0 \le g(y) - 1 = y^{-q'}(y-1)\left( \frac{y- y^{q'}}{y - 1} - c \right),\] and since the function $y \to \frac{y-y^{q'}}{y-1}$ is monotone decreasing, we see that
\begin{equation}\label{cbound}
c \le \frac{\delta - \delta^{q'}}{\delta - 1} < 1 - q'.
\end{equation}
Now, we calculate
\[\frac{\partial \B}{\partial x_2} = x_2^{1-q'}\left[(1-c)(1-q') - q'c\frac{x_1}{x_2}\right],\] which needs to be non-negative to satisfy \eqref{de1}. At $y=1$, the expression above is positive, given \eqref{cbound}. If $q \le \delta$, there is no possible value for $c$, because then $q' \ge \frac{\delta}{\delta -1}$, in which case $(1-c)(1-q') - \frac{q'c}{\delta} = c\left(q'- 1 - q'/\delta\right) + 1-q' \ge 0$ contradicts \eqref{cbound}. So, assuming $q > \delta$, we want to choose $c$ in such a way that $\left[(1-c)(1-q') - q'c\frac{x_1}{x_2}\right]$ stays positive on the entire interval $[1,\delta]$. Solving for $c$, we get
\[c = \frac{\delta(q'-1)}{\delta(q'-1) - q'} = \frac{\delta}{\delta - q}.\] Substituting this into \eqref{Bformula} yields our candidate function for $\B(x; \infty, q, \delta)$, \[B(x; \infty, q, \delta) = x_2^{1-q'}\left(\frac{q-\frac{x_1}{x_2}\delta}{q-\delta}\right),\] which is what appears in \eqref{pinftybellmanfn}.

If $\B$ solves \eqref{de2}, then $\B$ is constant in $x_2$, in which case $g$ must be constant, whence $g \equiv 1$ and $\B = x_1^{1-q'}$. But, then $\frac{\partial^2 \B}{\partial x_1^2} = (1-q')(-q')x_1^{-q'-1}$, which is positive. Therefore \eqref{de2} has no solution for $q > 1$.

\section{$B$ is concave}
We now proceed to verify, in several steps, that $B = \B$, proving Theorem \ref{auxthm}. Our first lemma addresses the fact that $B$ is, indeed, concave.

\begin{lem}\label{concavitylemma}
\textbf{Case 1, $p < \infty$:} Let $x^{\pm}$ be two arbitrary points in $\Omega_{\delta}$. If the entire line segment joining these two points (denoted $[x^-, x^+]$) is contained within $\Omega_{\delta}$, then
\begin{equation}
B(\alpha^- x^- + \alpha^+ x^+) \ge \alpha^- B(x^-) + \alpha^+ B(x^+)
\end{equation}
holds for all non-negative numbers $\alpha^{\pm}$ with $\alpha^- + \alpha^+ = 1$.

\textbf{Case 2, $p = \infty$:} Let $x^{\pm}$ be two arbitrary points in $\Omega_{\delta}$ and let $\alpha^{\pm}$ be a pair of non-negative numbers such that $\alpha^- + \alpha^+ = 1$. Define $x^0 = (x^0_1, x^0_2) := (\alpha^- x_1^- + \alpha^+ x_1^+, \text{max}\{x_2^-, x_2^+\})$. If both of the points $(x_1^{\pm}, x^0_2)$ are in $\Omega_{\delta}$, then
\begin{equation}
B(x^0) \ge \alpha^- B(x^-) + \alpha^+ B(x^+).
\end{equation}
\end{lem}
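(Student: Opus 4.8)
The plan is to reduce the statement to a pointwise inequality on the Hessian that was essentially already established in the heuristic derivation of $B$, and then promote that infinitesimal computation to the finite increment inequality by integrating along the segment. Concretely, for \textbf{Case 1} I would first treat the boundary piece $x_2 = x_1^p$ separately (there $B(x) = x_1^{1-q'}$, which is convex in $x_1$, but the segment condition forces us onto this curve only trivially — a segment inside $\Omega_\delta$ meeting the lower parabola does so tangentially or at an endpoint), and then work on the open region $x_2 > x_1^p$, where $B$ is smooth in the chosen branch ($r^+$ for $q>1$, $r^-$ for $\tfrac{p-1}{p}<q<1$). On that open set, recall from Section \ref{Bguesssection} that $\mathrm{Hess}(B)$ factors as $x_1^{-q'-1}$ times a congruence of the matrix $R$, and that the defining ODE for $g$ was chosen exactly so that $R$ is singular; moreover the branch of $r^{\pm}$ was selected so that the upper-left entry $q'(q'-1)g - p(p-1)yg'$ of $R$ is $\le 0$ (this is the computation $h \ge h(0)$ at the end of Section \ref{Bguesssection}). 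A singular symmetric matrix with a nonpositive diagonal entry is negative semidefinite, so $\mathrm{Hess}(B) \le 0$ throughout $\{x_2 > x_1^p\}$. Standard second-order calculus then gives, for any segment $[x^-,x^+]$ lying in the closed region, that $t \mapsto B((1-t)x^- + t x^+)$ is concave on $[0,1]$, which is exactly the asserted inequality; the one delicate point is that the segment may touch the boundary parabola $x_2 = x_1^p$, so I would argue by approximating $x^{\pm}$ by interior points and using continuity of $B$ up to the boundary, or by checking directly that $B$ is $C^1$ across the parabola (where $r^{\pm} = 0$) so that the one-variable function remains concave across that point.

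For \textbf{Case 2} ($p = \infty$), I would follow the template already laid out in Section \ref{Bguesssection} around equation \eqref{strangeconcavity}. Write $x^0$ as defined; by symmetry assume $x_2^+ = x_2^0 = \max\{x_2^-, x_2^+\}$, set $\Delta_1 = x_1^+ - x_1^-$ and $\Delta_2 = x_2^0 - x_2^- \ge 0$. The difference $\alpha^- B(x^-) + \alpha^+ B(x^+) - B(x^0)$ must be shown $\le 0$. Since here $B$ is $C^\infty$ on the relevant region (again splitting off the trivial boundary $x_2 = x_1$), I would \emph{not} rely on the second-order Taylor expansion as an approximation but instead integrate exactly: write $B(x^\pm) - B(x^0)$ using the fundamental theorem of calculus twice along the respective segments from $x^0$, producing an exact remainder involving $\partial B/\partial x_2$ at $x^0$ and an integral of $\mathrm{Hess}(B)$ over a triangle. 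The sign analysis is then the one in the excerpt: from the construction, $B$ satisfies \eqref{de1}, i.e.\ $\partial^2 B/\partial x_1^2 \equiv 0$ and $\partial B/\partial x_2 \ge 0$ (the latter is where $c = \delta/(\delta - q)$ and the bound \eqref{cbound} were used, valid precisely because we are in the regime $q > \delta$; for $q \le \delta$ the function $B$ is $+\infty$ and there is nothing to prove). With $\partial^2 B/\partial x_1^2 = 0$ the Hessian has only the $x_2 x_2$-entry and the mixed entry, and singularity forces $\partial^2 B/\partial x_2^2 \le 0$ and the mixed-term estimate quoted in the excerpt; feeding these in collapses the triangle integral to the single term $\alpha^-\big(-\tfrac{\partial B}{\partial x_2}\Delta_2 + \tfrac12(1-\alpha^+)\tfrac{\partial^2 B}{\partial x_2^2}\Delta_2^2\big) \le 0$.

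The main obstacle I anticipate is \emph{not} the infinitesimal sign computations — those are already done in Section \ref{Bguesssection} — but the global-versus-local gap and the behaviour at the boundary of $\Omega_\delta$. In Case 1 this means carefully justifying concavity of the one-variable restriction when the segment is only required to lie in the closed domain and may run along or touch the parabola $x_2 = x_1^p$ where the branch functions $r^{\pm}$ hit $0$; I expect this is handled by verifying $B \in C^1(\Omega_\delta)$ (continuity of $g$, $g'$ at $y=1$, which follows from the explicit formula for $g$) and then noting a $C^1$ function whose restriction to every interior subsegment is concave is concave on the closed segment. In Case 2, the analogous subtlety is that the Taylor expansion in the excerpt is heuristic, so I would replace it with the exact integral form above to make the estimate rigorous, and separately dispose of the degenerate cases where $x^-$ or $x^+$ lies on $x_2 = x_1$ (where $B = x_1^{1-q'}$, handled by direct substitution). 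Once these boundary issues are dispatched, the lemma follows.
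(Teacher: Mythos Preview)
Your Case~1 plan is essentially the paper's: the paper directly computes the second partials of $B$ and writes the quadratic form of $\mathrm{Hess}(B)$ as a nonpositive multiple of a perfect square (equation \eqref{quadform}), which is exactly negative semidefiniteness. Your route via the $R$-matrix factorization from Section~\ref{Bguesssection} amounts to the same thing, and the boundary worries, while not unreasonable, are not addressed any more carefully in the paper itself (note incidentally that $r^{\pm}=0$ corresponds to $\Gamma_{\delta}$, not $\Gamma_1$; the lower parabola is $r^{\pm}=s^{\pm}$).

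Your Case~2 plan, however, has a genuine gap. You want to feed the Hessian of $B(\,\cdot\,;\infty,q,\delta)$ into the estimate from the heuristic around \eqref{strangeconcavity}, invoking ``singularity forces $\partial^2 B/\partial x_2^2\le 0$ and the mixed-term estimate.'' But for the actual function $B(x)=x_2^{1-q'}\frac{q-\delta x_1/x_2}{q-\delta}$ one has $\partial^2 B/\partial x_1^2\equiv 0$ while $\partial^2 B/\partial x_1\partial x_2=\delta q' x_2^{-q'-1}/(q-\delta)\neq 0$, so $\det\mathrm{Hess}(B)=-\big(\partial^2 B/\partial x_1\partial x_2\big)^2<0$: the Hessian is \emph{indefinite}, not singular and not negative semidefinite. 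The inequality $\frac{\partial^2 B}{\partial x_1\partial x_2}\Delta_1\Delta_2\le -\tfrac12\big(\frac{\partial^2 B}{\partial x_1^2}\Delta_1^2+\frac{\partial^2 B}{\partial x_2^2}\Delta_2^2\big)$ therefore fails for some choice of signs, and an exact integral remainder does not rescue it. The paper avoids the Hessian altogether: since $B$ is \emph{linear} in $x_1$, one has exactly
\[
B(x^0)-\alpha^-B(x^-)-\alpha^+B(x^+)=\alpha^-\big(B(x_1^-,x_2^+)-B(x_1^-,x_2^-)\big),
\]
and this is $\ge 0$ simply because $\partial B/\partial x_2\ge 0$ along the vertical segment from $(x_1^-,x_2^-)$ to $(x_1^-,x_2^+)$, which is guaranteed by the hypothesis $(x_1^-,x_2^0)\in\Omega_{\delta}$. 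Replace your triangle-integral argument with this two-line computation.
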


\begin{proof} For $p < \infty$, this is a direct calculation, since we simply need to check that the Hessian of $B$ is negative (semi-)definite.
\begin{gather*}
\frac{\partial r^{\pm}}{\partial x_1}(x) = \frac{(1-(p-1)r^{\pm})(1-pr^{\pm})}{x_1 r^{\pm} (p-1)}, \quad \frac{\partial r^{\pm}}{\partial x_2}(x) = \frac{-(1-(p-1)r^{\pm})(1-pr^{\pm})}{x_2 r^{\pm} p (p-1)}, \\
\frac{\partial B}{\partial r^{\pm}}(x) = -B(x) \left( \frac{q'(q'-1)r^{\pm}}{(1-pr^{\pm})(1-(p-1)r^{\pm})(1-\gamma r^{\pm})} \right), \\
%\frac{\partial B}{\partial x_1}(x) = \frac{-B(x)(q'-1)}{x_1} \left( 1 + \frac{q'}{(1-\gamma r^{\pm})(p-1)}\right), \\
%\frac{\partial B}{\partial x_2}(x) = \frac{B(x) q'(q'-1)}{(1-\gamma r^{\pm})x_2p(p-1)}, \\
\frac{\partial^2 B}{\partial x_1^2}(x) = \frac{-(1-(p-1)r^{\pm})^2\gamma q'(q'-1)B(x)}{(1-\gamma r^{\pm})(p-1)^2 r^{\pm} x_1^2}, \\
\frac{\partial^2 B}{\partial x_1 \partial x_2}(x) = \frac{(1-(p-1)r^{\pm})\gamma q'(q'-1)B(x)}{(1-\gamma r^{\pm})p(p-1)^2 r^{\pm} x_1 x_2}, \\
\frac{\partial^2 B}{\partial x_2^2}(x) = \frac{-\gamma q'(q'-1)B(x)}{(1-\gamma r^{\pm})p^2(p-1)^2 r^{\pm} x_2^2}.
\end{gather*}
The quadratic form given by the Hessian of $B$ is
\begin{align}\label{quadform}
\sum_{i,j=1}^2 &\frac{\partial^2 B}{\partial x_i \partial x_j}(x) \Delta_i \Delta_j = \notag \\
& \qquad \frac{-(1-(p-1)r^{\pm})^2 \gamma q'(q'-1) B(x)}{(1-\gamma r^{\pm})(p-1)^2 r^{\pm} x_1^2} \left( \Delta_1 - \frac{x_1}{(1-(p-1)r^{\pm})p x_2}\Delta_2\right)^2.
\end{align}
This is non-positive for $q > q^*$ because $(1- \gamma r^+) > 0$; for $\frac{p-1}{p} < q < q_*$, $\gamma < 0$ so this is non-positive as well. Thus, $B(x; p, q, \delta)$ is concave.

For $p = \infty$, a slightly different approach is needed. First of all, we may assume that $x^0_2 = x^+_2$ due to the symmetry between $x^-$ and $x^+$. Also, $B$ is linear in $x_1$, so
\begin{align*}
B(x^0) - \alpha^-B(x^-) - \alpha^+&B(x^+)\\
&= \alpha^-B(x^-_1, x^+_2) + \alpha^+B(x^+_1, x^+_2) - \alpha^-B(x^-) - \alpha^+B(x^+) \\
&= \alpha^-\left(B(x^-_1, x^+_2) - B(x^-_1, x^-_2)\right).
\end{align*}
This leads us to investigate
\[\frac{\partial B(x^-_1, x_2)}{\partial x_2} = \frac{x_2^{-q'}}{q-\delta} \left[q'\left(\delta \frac{x^-_1}{x_2} -1\right)\right],\]
which is non-negative, since we've assumed that $q > \delta$ and that $\frac{x^-_1}{x_2} \ge \frac{1}{\delta}$ for any $x_2$ between $x^-_2$ and $x^+_2$. Hence, $B(x; \infty, q, \delta)$ is concave.
\end{proof}

\section{How to find extremal weights}\label{weightsection}
We now want to show that $B(x) \le \B(x)$ on $\Omega_{\delta}$. To do so, given a point $x \in \Omega_{\delta}$, we will find a weight $w \in RH_p^{\delta}$ which corresponds to $x$ and which satisfies $B(x) = \avg{w^{1-q'}}$. We will (prematurely) call such a weight \emph{extremal}, because once we show that $B=\B$, these weights achieve the supremum which defines $\B$. The heuristics for finding such weights follows.

Since we know that $B$ is concave and that its Hessian has a kernel, we know that $B$ is linear along certain lines in $\Omega_{\delta}$. We will show later that these lines actually cover $\Omega_{\delta}$. With that, the heuristics above for why $B$ should be concave give us a pattern for how to find extremal weights. We start with $p < \infty$. Given an arbitrary point $x^0$ on the curve $\Gamma_{\delta}:=\{(x_1, x_2): x_2 = (\delta x_1)^p\}$, we find a maximal weight representing $x^0$ (this is far easier than doing so in general). Also, any weight represented by a point $x$ on the graph $\Gamma_1:=\{(x_1,x_2): x_2 = x_1^p\}$ is constant (and therefore maximal). So, given a point $\hat{x} \in \Omega_{\delta}$, we find the line along which $B$ is linear which passes through $\hat{x}$. This line will intersect the graphs $\Gamma_{\delta}$ and $\Gamma_1$ at points which we call $x^-$ and $x^+$, respectively. We find the constants $\alpha^{\pm}$ such that $\hat{x} = \alpha^- x^- + \alpha^+ x^+$. Then, $\hat{x}$ can be represented by the weight $w$ which is the concatenation of the maximal weight for $x^-$ on $I^-$ and the maximal (constant) weight for $x^+$ on $I^+$, re-scaling the intervals if necessary. Since $B$ is linear along this line, we know that $B(\hat{x}) = \alpha^- B(x^-) + \alpha^+ B(x^+) = \alpha^- \avg{w^{1-q'}}_{I^-} + \alpha^+ \avg{w^{1-q'}}_{I^+} = \avg{w^{1-q'}}_I,$ whence $w$ is maximal.

We start by finding an extremal weight for a point on the curve $\Gamma_{\delta}$. Given an arbitrary positive number $x_1^0$, let $x_2^0 = (\delta x_1^0)^p$. Let $I = [0,1]$. We seek a weight $w \in RH_p^{\delta}(I)$ such that $\avg{w}_I = x_1^0$, $\avg{w^p}_I = x_2^0$ and such that $\avg{w^{1-q'}}_I$ is as large as possible. Therefore, for any $a \in [0,1]$, we insist that
\[ \left(\frac{1}{a}\int_0^a w^p(t) \, dt \right) \left(\frac{1}{a} \int_0^a w(t) \, dt\right)^{-p} = \delta^p. \]
Let $v(a) := \int_0^a w^p(t) \, dt$. Then, $w(t) = v'(t)^{1/p}$. Therefore,
\begin{gather*}
\frac{v(a)}{a} = \delta^p \left(\frac{1}{a} \int_0^a v'(t)^{1/p} \, dt\right)^p, \text{ hence}\\
\frac{a}{\delta} \left(\frac{v(a)}{a}\right)^{1/p} = \int_0^a v'(t)^{1/p} \, dt.
\end{gather*}
We take the derivative with respect to $a$ and find
\begin{gather*}
 \delta^{-1} \left[ (1-\frac{1}{p}) t^{-1/p}v^{1/p} + \left(\frac{1}{p}\right)t^{1-1/p}v^{1/p - 1} v' \right] = (v')^{1/p}, \text{ so} \\
 p\delta = \left(p-1 + \frac{tv'}{v}\right) \left(\frac{v}{tv'}\right)^{1/p}.
 \end{gather*}
From the definition of $s^{\pm} = s^{\pm}_p(\delta)$, we know that
\[ p\delta = \left(p - 1 + \frac{1}{1-ps^{\pm}}\right) (1-ps^{\pm})^{1/p}, \]
so we must have $\frac{v}{tv'} = 1 - ps^{\pm}$. Consequently,
\begin{gather*}
\frac{dt}{t} \frac{1}{1-ps^{\pm}} = \frac{dv}{v} \text{, so} \\
v(t) = C t^{\frac{1}{1-ps^{\pm}}} \text{, whence} \\
w(t) = v'(t)^{1/p} = C t^{\frac{s^{\pm}}{1-ps^{\pm}}}.
\end{gather*}
We want $\avg{w}_I = x^0_1$, so we must set $C = x_1^0 \left(\frac{1-(p-1)s^{\pm}}{1-ps^{\pm}}\right)$. Putting it all together,
\begin{equation}\label{maxweight}
w(t) = x_1^0 \left(\frac{1-(p-1)s^{\pm}}{1-ps^{\pm}}\right)t^{\frac{s^{\pm}}{1-ps^{\pm}}}.
\end{equation}
It is straightforward to check that with this constant, $\avg{w^p}_I = x^0_2 = (\delta x_0^1)^p$. So, we have found our candidate for an extremal weight representing a point on the curve $\Gamma_{\delta}$.

Next, we must find the lines along which $B$ is linear. By \eqref{quadform}, we see that the vector field along which $B$ is linear is
\begin{equation}\label{vectorfield}
 p(1-(p-1)r^{\pm}) dx_1 - \frac{x_1}{px_2} dx_2 = 0.
\end{equation}
To find explicit formulae for the lines, we work with the definition of $r^{\pm}$. Recall
\begin{equation}\label{defnofr}
\frac{x_2}{x_1^p} = \delta^p \frac{(1-pr^{\pm})^{p-1}}{(1-(p-1)r^{\pm})^p}.
\end{equation}
Therefore,
\[\frac{dx_2}{x_2} - p \frac{dx_1}{x_1} = \frac{-rp(p-1)dr^{\pm}}{(1-pr^{\pm})(1-(p-1)r^{\pm})}.\]
Using \eqref{vectorfield}, we see that
\[\frac{dx_1}{x_1} = dr^{\pm} \left[ \frac{p}{1-pr^{\pm}} - \frac{p-1}{1-(p-1)r^{\pm}}\right] \quad \text{ and } \quad \frac{dx_2}{x_2} = \frac{pdr^{\pm}}{1-pr^{\pm}},\]
whence
\begin{equation}\label{coords}
x_1(r^{\pm}) = x_1(0) \frac{1-(p-1)r^{\pm}}{1-pr^{\pm}} \quad \text{ and } \quad x_2(r^{\pm}) = \frac{x_2(0)}{1-pr^{\pm}}.
\end{equation}
Taking $x_1(0) = b$ as a free parameter, \eqref{defnofr} gives us $x_2(0) = (\delta b)^p$. And, eliminating $r^{\pm}$ from \eqref{coords} yields
\begin{equation}
\delta^p p x_1 - b^{1-p}x_2 = \delta^p b (p-1),
\end{equation}
which is the equation of the line tangent to $\Gamma_{\delta}$ at the point $x = (b, (\delta b)^p)$. Notice that at $r^{\pm} = 0$, we have the point $(b, (\delta b)^p)$ on the line $\Gamma_{\delta}$ and at $r^{\pm}=s^{\pm}$, we have $x = \left(\frac{b(1-(p-1)s^{\pm})}{1-ps^{\pm}}, \frac{(\delta b)^p}{1-ps^{\pm}}\right)$, which satisfies $x_2 = x_1^p$ and hence lies on $\Gamma_1$. It is clear that by varying $b$, these segments cover $\Omega_{\delta}$.

We now check that $B$ is, in fact, linear along these segments. We use \eqref{B2} as an easier representation of $B$ for this calculation:
\begin{align*}
B(x) &= x_1^{-\gamma}x_2\left(\frac{1-ps^{\pm}}{1-pr^{\pm}}\right)^{\gamma}\left(\frac{1-(p-1)r^{\pm}}{1-(p-1)s^{\pm}}\right)^{\gamma}\left(\frac{1-\gamma r^{\pm}}{1-\gamma s^{\pm}}\right) \\
&= \left(\frac{b(1-(p-1)r^{\pm})}{1-pr^{\pm}}\right)^{-\gamma} x_2 \left(\frac{1-ps^{\pm}}{1-pr^{\pm}}\right)^{\gamma} \left(\frac{1-(p-1)r^{\pm}}{1-(p-1)s^{\pm}}\right)^{\gamma} \left(\frac{1-\gamma r^{\pm}}{1-\gamma s^{\pm}}\right) \\
&= \frac{1}{1-\gamma s^{\pm}}\left(\frac{1-ps^{\pm}}{b(1-(p-1)s^{\pm})}\right)^{\gamma}x_2(1-\gamma r^{\pm}).
\end{align*}
However, $x_2 = \frac{(\delta b)^p}{1-pr^{\pm}}$, so $r^{\pm} = \frac{1}{p} - \frac{(\delta b)^p}{p x_2}$ and therefore,
\[ x_2(1-\gamma r^{\pm}) = x_2(1-\frac{\gamma}{p}) + \frac{\gamma (\delta b^p)}{p}. \]
So,
\[ B(x) = \frac{1}{1-\gamma s^{\pm}}\left(\frac{1-ps^{\pm}}{b(1-(p-1)s^{\pm})}\right)^{\gamma}\left(x_2(1-\frac{\gamma}{p}) + \frac{\gamma (\delta b)^p}{p}\right),\] and hence is linear.

We now work with an arbitrary point $x^0 \in \Omega_{\delta}$. Given such a point, one of the segments on which $B$ is linear passes through $x^0$; finding it requires that we find the corresponding value of $b$. Further, given that $x^0$ corresponds to $I = [0,1]$, we want to calculate where to split $I$ so that $x^-$ is on $\Gamma_{\delta}$ and $x^+$ is on $\Gamma_1$. We determine $b$ first. $x^0$ determines a value $r^0 = u_p^{\pm}\left(\frac{x_2^0}{(\delta x_1^0)^p}\right)$, from which we get
\[x^0 = \left( \frac{b(1-(p-1)r^0)}{1-pr^0}, \frac{(\delta b)^p}{1-pr^0} \right) , \] and we see
\[b = \frac{x^0_1 (1- pr^0)}{1-(p-1)r^0}. \]
Then, if we split $I$ at $a$, we know that $x^0_2 - x^+_2 = a (x^-_2 - x^+_2)$. So, we calculate
\[ x^0_2 - x^+_2 = \delta^p b^p p \left(\frac{r^0 -s^{\pm}}{(1-pr^0)(1-ps^{\pm})}\right) \quad \text{ and } \quad x^-_2 - x^+_2 = \frac{-(\delta b)^p ps^{\pm}}{1-ps^{\pm}},\]
whence $a = \frac{s^{\pm}-r^0}{s^{\pm}(1-pr^0)}$.

On $I^+$, our weight should be constant, so
\[w(t)|_{I^+} \equiv x^+_1 = \frac{b(1-(p-1)s^{\pm})}{1-ps^{\pm}} = x^0_1 \frac{(1-pr^0)(1-(p-1)s^{\pm})}{(1-(p-1)r^0)(1-ps^{\pm})}.\]
On $I^-$, the weight should be maximal. So, we re-scale \eqref{maxweight} and get
\[w(t)|_{I^-} = b \left(\frac{1-(p-1)s^{\pm}}{1-ps^{\pm}}\right)\left(\frac{t}{a}\right)^{\frac{s^{\pm}}{1-ps^{\pm}}} = x^0_1 \frac{(1-pr^0)(1-(p-1)s^{\pm})}{(1-(p-1)r^0)(1-ps^{\pm})}\left(\frac{t}{a}\right)^{\frac{s^{\pm}}{1-ps^{\pm}}}.\] Therefore, our (potential) extremal weight is
\begin{equation}\label{extweight} w(t) = \begin{cases} x^0_1 \frac{(1-pr^0)(1-(p-1)s^{\pm})}{(1-(p-1)r^0)(1-ps^{\pm})}\left(\frac{t}{a}\right)^{\frac{s^{\pm}}{1-ps^{\pm}}}& 0\le t \le a \\
x^0_1 \frac{(1-pr^0)(1-(p-1)s^{\pm})}{(1-(p-1)r^0)(1-ps^{\pm})}& a \le t \le 1, \\
\end{cases}
\end{equation}
where $a = \frac{s^{\pm}-r^0}{s^{\pm}(1-pr^0)}$.

For the case of $p = \infty$, given the above work, finding the extremal weight is rather easy. If we look at the power of $t$ in \eqref{extweight} and use the asymptotics in \eqref{asymptotics}, we see the power of our extremal weight should be $\delta - 1$. Also, we want the weight to represent a given point $x^0 = (x^0_1, x^0_2)$; recall that the second coordinate when $p = \infty$ is just $x_2 = \esup_I w$. Thus, the constant part of the weight must be equal to $x^0_2$. This leaves us with the simple task of finding the appropriate splitting point $a$. We take
\begin{equation}\label{extweight2} w(t) = \begin{cases} x^0_2\left(\frac{t}{a}\right)^{\delta -1}& 0\le t \le a \\
x^0_2 & a \le t \le 1, \\
\end{cases}
\end{equation}
and look for an $a$ such that $\avg{w}_I = x^0_1$. But,
\begin{align*}
\avg{w}_I &= x^0_2 \int_0^a \left(\frac{t}{a}\right)^{\delta-1} \, dt + (1-a)x^0_2 \\
&= x^0_2 - a\left(x^0_2 - \frac{x^0_2}{\delta}\right).
\end{align*}
Therefore, $a = \frac{1 - x^0_1/x^0_2}{1 - 1/\delta}$.

\section{$B(x) \le \B(x)$}
\begin{lem}\label{extremalweightlemma} For every $\delta \ge 1$, $1 < p \le \infty$, $\frac{p-1}{p} < q < \infty$ and every $x \in \Omega_{\delta}$, $B(x; p, q, \delta) \le \B(x; p, q, \delta)$.
\end{lem}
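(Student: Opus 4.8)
The plan is to prove the reverse inequality by exhibiting, for each $x\in\Omega_\delta$, a concrete weight $w\in RH_p^\delta(I)$ that is represented by $x$ and for which $\avg{w^{1-q'}}_I\ge B(x)$; since $\B(x)$ is defined to be the supremum of $\avg{w^{1-q'}}_I$ over all such weights, this yields $B(x)\le\B(x)$. (In the range where $B(x)=+\infty$ this amounts to producing a weight in $RH_p^\delta$ represented by $x$ whose $A_q$-average diverges, which is also the statement invoked in the proof of Theorems \ref{mainthm}--\ref{gehringthm}.) The candidate weight is exactly the one already constructed in Section \ref{weightsection}: formula \eqref{extweight} with $a=\frac{s^\pm-r^0}{s^\pm(1-pr^0)}$ when $p<\infty$, and formula \eqref{extweight2} when $p=\infty$; here we use the sign convention of Section \ref{Bguesssection} ($+$ for $q>1$, $-$ for $\frac{p-1}{p}<q<1$) and $r^0=u_p^\pm\!\big(x_2/(\delta x_1)^p\big)$. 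This single formula covers all of $\Omega_\delta$: at points of $\Gamma_1$ it degenerates to the constant weight $x_1$, and at points of $\Gamma_\delta$ (where $r^0=0$, $a=1$) to the pure power \eqref{maxweight}. Note also that \eqref{extweight} does not depend on $q$, so one weight serves for every $q$.

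Granting that $w\in RH_p^\delta(I)$ (the main point, discussed below), two routine verifications finish the proof. First, $\avg{w}_I=x_1$ and $\avg{w^p}_I=x_2$ (resp. $\esup_I w=x_2$ when $p=\infty$): these are precisely the normalisations imposed on $w$ in Section \ref{weightsection}. Second, the value of $\avg{w^{1-q'}}_I$. When $B(x)<\infty$, the cleanest argument is structural: $w$ is the concatenation, on $I^-=[0,a]$ and $I^+=[a,1]$, of the extremal power weight for the point $x^-\in\Gamma_\delta$ and the constant weight for the point $x^+\in\Gamma_1$ at the ends of the leaf through $x$, and $B$ is affine along that leaf (shown in Section \ref{weightsection}); hence $\avg{w^{1-q'}}_I=a\avg{(w^-)^{1-q'}}_{I^-}+(1-a)\avg{(w^+)^{1-q'}}_{I^+}=aB(x^-)+(1-a)B(x^+)=B(x)$, once one checks by hand that the power weight \eqref{maxweight} attains $B$ on $\Gamma_\delta$ and that $\avg{(w^+)^{1-q'}}_{I^+}=(x_1^+)^{1-q'}=B(x^+)$ (alternatively, integrate \eqref{extweight} directly and match \eqref{B1}). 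When $q_*\le q\le q^*$, so that $B(x)=+\infty$, the exponent of $t$ in $w^{1-q'}$ on $[0,a]$ equals $\tfrac{s^\pm(1-q')}{1-ps^\pm}$, which is $\le-1$ precisely because $1-\gamma s^\pm\le0$ in that range; hence $\int_0^a w^{1-q'}\,dt=+\infty$ and $\B(x)=+\infty=B(x)$.

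The crux is showing $w\in RH_p^\delta(I)$, i.e. that every subinterval $J=[c,d]\subseteq[0,1]$ obeys \eqref{RHpdef} with constant $\delta$; I would split this by the position of $J$ relative to the transition point $a$. If $c\ge a$, then $w$ is constant on $J$ and the ratio is $1$. If $d\le a$, then $w$ is a pure power on $J$; by the dilation invariance of the reverse-H\"older ratio the problem reduces to a one-parameter family indexed by $c/d$, for which the ratio is monotone and takes its largest value, $\delta$, in the limit $c\to0$ (the defining feature of the exponent in \eqref{maxweight}). The remaining, and delicate, case is $c<a<d$, where $J$ straddles $a$. For $p=\infty$ this is quick: with $w(t)=x_2^0(t/a)^{\delta-1}$ on $[0,a]$ and $w\equiv x_2^0$ on $[a,1]$ we have $\esup_J w=x_2^0$, and after computing $\avg{w}_J$ the inequality $\esup_J w\le\delta\avg{w}_J$ reduces to $(1-\delta)(d-a)\le c-c^\delta a^{1-\delta}$, which holds because $(1-\delta)(d-a)\le0$ and $c\ge c^\delta a^{1-\delta}$ for $0<c\le a\le1$ and $\delta\ge1$. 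For $p<\infty$ the corresponding estimate is the hard step: I would control $\avg{w^p}_J-\delta^p\avg{w}_J^p$ on $[c,d]$ by comparison with its value on $[c,a]$ (nonpositive by the previous case), monitoring the change as $d$ increases from $a$ to $1$ and using the identity $\delta^p(p\nu+1)=(\nu+1)^p$ satisfied by $\nu=\tfrac{s^\pm}{1-ps^\pm}$; I expect this to reduce to a one-variable convexity/monotonicity check, but it is the one place where a genuine computation is unavoidable.
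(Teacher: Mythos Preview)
Your overall strategy coincides with the paper's: exhibit the concrete weight $w_{c,a,\nu}$ of \eqref{extweight} (resp.\ \eqref{extweight2}), check it represents $x$, check $\avg{w^{1-q'}}_I=B(x)$, and deduce $B(x)\le\B(x)$; the infinite case $q_*\le q\le q^*$ is handled exactly as you describe. The paper does the second verification by direct integration from the closed formula \eqref{avgofpower} rather than via your structural leaf-linearity argument, but both are fine.

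The one substantive discrepancy is the step you flag as ``the crux'': showing $w_{c,a,\nu}\in RH_p^\delta(I)$ for $p<\infty$. You leave the straddling case $c<a<d$ as an unfinished monotonicity computation in the endpoint $d$. The paper does \emph{not} argue this way; instead it isolates the statement as a separate appendix lemma (Lemma~\ref{RHpnorm}) proving that the $RH_p$ constant of $w_{c,a,\nu}$ equals $\tfrac{1+\nu}{(p\nu+1)^{1/p}}$, which for $\nu=\tfrac{s^\pm}{1-ps^\pm}$ is exactly $\delta$. That computation introduces auxiliary variables
\[
\lambda=\frac{a^{-\nu}\alpha^{\nu+1}}{\beta(\nu+1)-a\nu},\qquad
\mu=\frac{a^{-p\nu}\alpha^{p\nu+1}}{\beta(p\nu+1)-ap\nu},\qquad
K=\frac{(1+p\tau)^{1/p}}{1+\tau},\quad \tau=\nu\Bigl(1-\frac{a}{\beta}\Bigr),
\]
rewrites the reverse-H\"older ratio as $\theta K\,\phi(\lambda,\mu)$ with $\theta=\tfrac{1+\nu}{(p\nu+1)^{1/p}}$, and then maximises $\phi$ in $\mu$ to obtain $\phi\le1$ and $K\le1$. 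This substitution trick is the missing idea in your outline; your proposed ``monitor $d$ and use $\delta^p(p\nu+1)=(\nu+1)^p$'' approach may well work, but it is not what the paper does and you have not carried it through. For $p=\infty$ your direct argument is correct and matches the content of Lemma~\ref{RHinftynorm}.
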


\begin{proof}
As usual, we address $p < \infty$ first. Since $x_2 = x_1^p$ if and only if $w(t) \equiv x_1$ is constant, we know that for such points $r_p^{\pm} = s_p^{\pm}$ and hence that $\B(x_1, x_1^p; p, q, \delta) = B(x_1, x_1^p; p, q, \delta) = x_1^{1-q'}$ for all $x_1 > 0$, $1 < p < \infty$, and $0 < q < \infty$. So, we now consider $\delta > 1$ and points $x$ with $x_2 x_1^{-p} > 1$.

Fix an arbitrary point $x^0 \in \Omega_{\delta}$ with $x^0_2 (x^0_1)^{-p} > 1$. Fix $I = [0,1]$ and let $a \in (0,1]$. We define
\[w_{c,a,\nu}(t) = \begin{cases} c\left(\frac{t}{a}\right)^{\nu}& \text{ if } 0 \le t \le a \\
c& \text{ if } a \le t \le 1. \\ \end{cases} \]
Then,
\begin{equation}\label{avgofpower}
\avg{w^{\theta}_{c,a,\nu}}_I = \begin{cases} c^{\theta} \frac{1+(1-a)\theta\nu}{1+\theta\nu}& \text{ if } \theta\nu > -1 \\
\infty& \text{ if } \theta\nu \le -1. \\ \end{cases}
\end{equation}
The weight $w_{c,a,\nu}$ has $RH_p$ constant equal to $\frac{1+\nu}{(p\nu + 1)^{1/p}}$, as is demonstrated in the appendix (Lemma \ref{RHpnorm}).

As our earlier work suggests, to get an extremal function we use the values
\begin{equation}
\nu = \frac{s^{\pm}}{1-ps^{\pm}}, \quad a = \frac{s^{\pm}-r^0}{s^{\pm}(1-pr^0)}, \quad c = x^0_1 \frac{(1-pr^0)(1-(p-1)s^{\pm})}{(1-(p-1)r^0)(1-ps^{\pm})}.
\end{equation}
Since the calculations for both cases are very similar, we only address the case of $q > 1$. With $\delta > 1$, we know $s^+ > 0$; further, $s^+ < 1/p$. Therefore, $\frac{s^+-r^0}{s^+} < 1-pr^0$, whence $0 < a < 1$. And, $\nu > 0$. Also, for this value of $\nu$, the $RH_p$ constant for $w_{c,a,\nu}$ is equal to $\delta$, as can easily be checked. Using \eqref{avgofpower}, we check that the weight $w_{c,a,\nu}$ does indeed represent the point $x^0$. We see
\[
\avg{w_{c,a,\nu}}_I = c \frac{1 + (1-a)\nu}{1+\nu}, \quad \text{but}
\]
\[
1+\nu = \frac{1-(p-1)s^+}{1-ps^+}, \quad 1+(1-a)\nu = \frac{1-(p-1)r^0}{1-pr^0}, \text{ whence } \avg{w_{c,a,\nu}}_I = x^0_1.
\]
Further,
\[
\avg{w_{c,a,\nu}^p}_I = c^p \frac{1+(1-a)p\nu}{1+p\nu},
\]
\[
c^p = (x^0_1)^p \frac{\delta^p}{1-ps^+} \frac{x^0_2(1-pr^0)}{\delta^p (x^0_1)^p} = x^0_2 \frac{1-pr^0}{1-ps^+},
\]
\[
1+p\nu = \frac{1}{1-ps^+}, \quad 1+(1-a)p\nu = \frac{1}{1-pr^0}, \text{ thus } \avg{w_{c,a,\nu}^p}_I = x^0_2,
\]
and $w_{c,a,\nu}$ does represent $x^0$. Finally, we check that $w_{c,a,\nu}$ is maximal, assuming that $(1-q')\nu > -1$
\begin{gather*}
\avg{w_{c,a,\nu}^{1-q'}}_I = c^{1-q'} \frac{1+(1-a)(1-q')\nu}{1+(1-q')\nu}, \\
c^{1-q'} = (x^0_1)^{1-q'} \left(\frac{1-ps^+}{1-pr^0}\right)^{q'-1} \left(\frac{1-(p-1)r^0}{1-(p-1)s^+}\right)^{q'-1},
\end{gather*}
\[1+(1-q')\nu = \frac{1-\gamma s^+}{1-ps^+}, \quad 1+(1-a)(1-q')\nu = \frac{1-\gamma r^0}{1-pr^0}, \text{ hence } \avg{w_{c,a,\nu}^{1-q'}}_I = B(x^0).\]
Our assumption that $(1-q')\nu > -1$ yields the restrictions on $q$ in Theorems \ref{mainthm} and \ref{auxthm}:
\begin{gather*}
(1-q')\nu = \frac{(1-q')s^+}{1-ps^+}, \text{ so } \\
(1-q')\nu > -1 \Leftrightarrow q' < \frac{1-(p-1)s^+}{s^+} \Leftrightarrow q > \frac{1-(p-1)s^+}{1-ps^+} = q^*(\delta, p);
\end{gather*}
similarly, for $\frac{p-1}{p} < q < 1$, $(1-q')\nu > -1 \Leftrightarrow q < q_*$. Therefore, for $\frac{p-1}{p} < q < q_*$ and $q > q^*$, for all $p$ and any $\delta \ge 1$, by the definition of $\B(x)$, we know that $B(x; p, q, \delta) \le \B(x; p, q, \delta)$.

Also, for $q_* \le q \le q^*$, $\B(x)$ is infinite, since the average $\avg{w_{c,a,\nu}^{1-q'}}_I$ is infinite. Hence, for $q_* \le q \le q^*$, the inequality $B(x) \le \B(x)$ is trivially true.

Now, for $p = \infty$, we follow the same path. As before, along the line $x_1 = x_2$, the weights are all constant, so $\B(x; \infty, q, \delta) = B(x; \infty, q, \delta)$. We thus consider only $x_2 > x_1$. Again we fix a point $x^0 \in \Omega_{\delta}$ and consider the weight $w_{c, a, \nu}$. We use our earlier work to inform our choices of $c = x^0_2$, $a = \frac{1 - x^0_1/x^0_2}{1 - 1/\delta}$ and $\nu = \delta - 1$. We check that this weight represents $x^0$, first by checking
\[\avg{w_{c,a,\nu}}_I = c \frac{1 + (1-a)\nu}{1+\nu} = \frac{x^0_2}{\delta}(\delta - \delta(1-\frac{x^0_1}{x^0_2})) = x^0_1.\]
And, clearly, $\esup_I w_{c,a,\nu} = x^0_2$. This weight has $RH_{\infty}$ constant equal to $\delta$, which, as before, we prove in the appendix (see Lemma \ref{RHinftynorm}). Finally, we check that this weight is maximal, assuming $(1-q')\nu > -1$:
\begin{align*}
\avg{w_{c,a,\nu}^{1-q'}}_I &= c^{1-q'} \frac{1+(1-a)(1-q')\nu}{1+(1-q')\nu} = (x^0_2)^{1-q'} \frac{1+(1-a)(q' - \delta(q'-1) -1)}{q' - \delta(q'-1)} \\
&= (x^0_2)^{1-q'} \frac{q' - \delta(q'-1) + \delta(q'-1)(1- \frac{x^0_1}{x^0_2})}{q' - \delta(q'-1)} \\
&= (x^0_2)^{1-q'} \frac{q - \delta \frac{x^0_1}{x^0_2}}{q-\delta} = B(x^0).
\end{align*}
The restriction that $(1-q')\nu > -1$ corresponds to $q > \delta$, as in Theorem \ref{mainthm2}, and the fact that the average $\avg{w^{1-q'}_{c, a, \nu}}$ is infinite for $q \le \delta$ establishes the fact that $\B(x; \infty, q, \delta) \ge B(x; \infty, q, \delta)$ for all $q > 1$.
\end{proof}

\section{$B(x) \ge \B(x)$}\label{omegaepsilonsection}
We won't be able to prove this directly; instead, we will resort to an approximation procedure which will involve looking at domains $\Omega_{\epsilon}$ for $\epsilon > \delta$. We would like to use Lemma \ref{concavitylemma}, but there is a slight difficulty, in that the line joining two points in $\Omega_{\delta}$ mentioned there might leave $\Omega_{\delta}$. Thus, our first task is to show that for a given $\delta$, for every $\epsilon > \delta$, there is a way to split the interval in such a way that $[x^-, x^+]$ is contained inside $\Omega_{\epsilon}$.

\begin{lem}\label{splittinglemma}
\textbf{Case 1, $p < \infty$:} Fix $\delta > 1$. Then for an arbitrary $\epsilon > \delta$ and an arbitrary weight $w \in RH_p^{\delta}(J)$, there is a splitting $J = J^- \cup J^+$, $|J^{\pm}| = \alpha^{\pm}|J|$ such that the entire interval with the endpoints $x^{\pm} = \left(\avg{w}_{J^{\pm}}, \avg{w^p}_{J^{\pm}}\right)$ is in $\Omega_{\epsilon}$. Moreover, the splitting parameters $\alpha^{\pm}$ can be chosen bounded away from 0 and 1 uniformly with respect to $w$ and, therefore, with respect to $J$ as well.

\textbf{Case 2, $p = \infty$:} Fix $\delta > 1$. Then for an arbitrary $\epsilon > \delta$ and an arbitrary weight $w \in RH_{\infty}^{\delta}(J)$, there is a splitting $J = J^- \cup J^+$, $|J^{\pm}| = \alpha^{\pm}|J|$ such that $\frac{x^0_2}{x^{\pm}_1} \le \epsilon$, where $x^{\pm} = \left(\avg{w}{J^{\pm}}, \esup_{J^{\pm}} w\right)$ and $x^0 = \left(\avg{w}_J, \esup_J w\right)$. Moreover, the splitting parameters $\alpha^{\pm}$ can be chosen bounded away from 0 and 1 uniformly with respect to $w$ and, therefore, with respect to $J$ as well.
\end{lem}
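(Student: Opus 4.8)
The plan is to reduce to $J=[0,1]$ by affine invariance, dispose of the constant-weight case immediately (any split into two equal halves gives the degenerate segment $\{x^0\}\subset\Omega_\delta$), and then treat $p<\infty$ and $p=\infty$ separately. Two elementary geometric remarks organize everything in the case $p<\infty$. First, the lower boundary $\Gamma_1$ of every $\Omega_\epsilon$ bounds a convex region (the epigraph of the convex function $x_1\mapsto x_1^p$), and $x^\pm\in\Omega_\delta$ always since restrictions of $RH_p^\delta$ weights to subintervals are again $RH_p^\delta$; hence the segment $[x^-,x^+]$ automatically lies above $\Gamma_1$, and the whole content is to keep it below $\Gamma_\epsilon$. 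Second, along the segment $x_1$ and $x_2$ are affine, so $x_2/x_1^p$ is unimodal there and it suffices to bound its maximum — attained at an endpoint or at a single interior point — by $\epsilon^p$.

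For $p<\infty$ I would choose the split point $t_0$ to equalize the $p$-energies, $\int_0^{t_0}w^p=\int_{t_0}^1 w^p$; such a $t_0$ exists by the intermediate value theorem. To see that $t_0$ and $1-t_0$ are bounded below by a constant depending only on $p$ and $\delta$, I would invoke the classical higher integrability of reverse-H\"older weights, $w\in RH_{p+\eta}$ for some $\eta=\eta(p,\delta)>0$: H\"older's inequality then gives $\int_E w^p\le C(|E|/|J|)^{\eta/(p+\eta)}\int_J w^p$, and applying this with $E=[0,t_0]$ forces $\tfrac12\le C\,t_0^{\eta/(p+\eta)}$ (and symmetrically for $1-t_0$). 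This is exactly the uniform bound on $\alpha^\pm$ claimed in the statement. With this choice one computes $x^\pm_2=\tfrac12 x^0_2/\alpha^\pm$, and combining H\"older on $J^\pm$ with the $RH_p^\delta$ bound on $J^\pm$ gives $x^\pm_1=(\alpha^\pm)^{-1/p}(\tfrac12 x^0_2)^{1/p}c^\pm$ with $c^\pm\in[\delta^{-1},1]$; in particular the $\Gamma$-ratio $x^\pm_2/(x^\pm_1)^p=(c^\pm)^{-p}\in[1,\delta^p]$, so each $x^\pm$ sits in $\Omega_\delta$ with room, and $x^-_1/x^+_1$ is pinned down up to a bounded factor.

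The heart of the matter is then to show that the unimodal quantity $x_2/x_1^p$ stays $\le\epsilon^p$ on all of $[x^-,x^+]$. It equals at most $\delta^p$ at the three distinguished points $x^-$, $x^0$, $x^+$ of the segment; if its interior mode lies outside the parameter interval between $x^-$ and $x^+$ it is $\le\delta^p<\epsilon^p$ throughout and we are done, so the only remaining case is an interior ``bulge'' of the chord above $\Gamma_\delta$. Here one must use that $x^-$ and $x^+$ come from one weight $w\in RH_p^\delta(J)$ and not from two independent points of $\Omega_\delta$: applying the reverse-H\"older inequality to short subintervals straddling $t_0$ rules out the extreme configuration in which both $x^\pm$ lie on $\Gamma_\delta$ with very different first coordinates, since such a $w$ would have to jump across $t_0$ by more than $RH_p^\delta$ allows; the size of the bulge is thereby controlled, and the strict inequality $\epsilon>\delta$ furnishes exactly the slack to absorb it (with $\beta_0$ permitted to degrade as $\epsilon\downarrow\delta$, which the statement allows). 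I expect this geometric estimate to be the main obstacle, precisely because $\Omega_\epsilon$ is not convex — its upper boundary $\Gamma_\epsilon$ is a convex curve, so a chord between two points of $\Omega_\delta$ genuinely can poke above $\Gamma_\epsilon$, and the overshoot does not shrink as $\epsilon\to\delta$ unless the single-weight constraint is exploited.

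For $p=\infty$ the argument is lighter. Now $x^0_2=\esup_J w$ and the requirement is $x^0_2/x^\pm_1\le\epsilon$. I would take the split point to be a Lebesgue point near a location where $w$ is essentially equal to $\esup_J w$; then $\esup_{J^\pm}w$ differs from $\esup_J w=x^0_2$ by a factor tending to $1$ as the approximation improves, and the defining inequality of $RH_\infty^\delta$ on each $J^\pm$ gives $x^\pm_1=\avg{w}_{J^\pm}\ge\esup_{J^\pm}w/\delta\ge x^0_2/\epsilon$ once the approximation is good enough, the slack $\epsilon>\delta$ again doing the work. The uniform lower bound on $\alpha^\pm$ follows from the elementary estimate $\int_E w\le\delta\,(|E|/|J|)\int_J w$, valid for $RH_\infty^\delta$ weights since $\esup w\le\delta\avg{w}$, which prevents either half of the split from being too thin.
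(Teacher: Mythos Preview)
Your approach differs fundamentally from the paper's, and the difference is exactly where your argument breaks down. The paper does \emph{not} fix a single split and then try to prove the chord stays inside $\Omega_\epsilon$; instead it runs a continuity (``stopping-time'') argument: start at $\alpha^\pm=\tfrac12$, and if the chord escapes $\Omega_\epsilon$, slide the split point continuously toward the offending endpoint until the chord becomes \emph{tangent} to $\Gamma_\epsilon$. At that moment, purely geometric bounds on where a tangent line to $\Gamma_\epsilon$ meets $\Gamma_\delta$ and $\Gamma_1$ give an explicit lower bound on $\min\{\alpha^\pm\}$ depending only on $p,\delta,\epsilon$. The same scheme works for $p=\infty$.

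Your proposal fixes the split once (equalize the $p$-energies) and then tries to argue the chord stays in $\Omega_\epsilon$ for \emph{every} $\epsilon>\delta$. This cannot work as written. With your split the points $x^\pm$ and the resulting chord are determined by $w,p,\delta$ alone, with no dependence on $\epsilon$; the chord's maximal ratio $\sup_{[x^-,x^+]}x_2/x_1^p$ is therefore some fixed number $M=M(w)$. You correctly observe that $M\le\delta^p$ at the three sample points $x^-,x^0,x^+$, and that the ratio is unimodal, but this does not bound the interior mode by $\delta^p$: for the extremal weight $w(t)=t^\nu$ with $RH_p(w)=\delta$, the left half $J^-=[0,t_0]$ already gives $x^-\in\Gamma_\delta$, and the chord from $x^-$ to $x^+$ genuinely rises above $\Gamma_\delta$ on the $x^-$ side. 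Your appeal to ``the single-weight constraint on short intervals straddling $t_0$'' is a hope, not an estimate; you give no inequality showing the bulge is $\le\epsilon^p$ for \emph{arbitrary} $\epsilon>\delta$, and you yourself flag this as ``the main obstacle.'' The parenthetical ``$\beta_0$ permitted to degrade as $\epsilon\downarrow\delta$'' does not help, because your split and hence your $\beta_0$ are independent of $\epsilon$. The paper's adaptive split is precisely what converts the slack $\epsilon>\delta$ into a workable bound: by pushing the split until tangency with $\Gamma_\epsilon$, the bound on $\alpha^\pm$ is allowed to depend on $\epsilon$ and does degrade as $\epsilon\downarrow\delta$, which is harmless for the subsequent limiting argument.

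Your $p=\infty$ sketch has an analogous gap in the uniformity claim. Splitting ``near a Lebesgue point where $w\approx\esup_J w$'' forces the split point to lie near wherever the essential supremum is attained, and that location can be arbitrarily close to an endpoint of $J$; nothing then prevents one of $\alpha^\pm$ from being arbitrarily small. Your estimate $\int_E w\le\delta(|E|/|J|)\int_J w$ bounds how much \emph{mass} a small set can carry, but not where the essential supremum sits. Again the paper's cure is to start at $\alpha^\pm=\tfrac12$, which trivially handles one side, and then slide toward the bad side until $x_2^0/x_1^{\pm}=\epsilon$ exactly; a two-line first-coordinate comparison then gives $\min\{\alpha^\pm\}\ge(\epsilon/\delta-1)/(\epsilon-1)$.

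A minor further point: invoking Gehring higher integrability to bound $t_0$ away from $0$ and $1$ is logically permissible (the qualitative result predates this paper), but the paper's argument is self-contained and uses nothing beyond the convex geometry of the curves $\Gamma_1,\Gamma_\delta,\Gamma_\epsilon$.
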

\begin{proof}
We start with $p < \infty$. Picking a weight $w \in RH_p^{\delta}(J)$ fixes an interval $J$ and a point $x^0$ in $\Omega_{\delta}$. Starting from this point, we first try $\alpha^{\pm} = \frac{1}{2}$. If the entire interval $[x^-, x^+] \subset \Omega_{\epsilon}$ for these parameters, then we fix this splitting and stop. Assuming that some point of $[x^-, x^+]$ is outside of $\Omega_{\epsilon}$ for $\alpha^{\pm}= \frac{1}{2}$, we then consider the possible points of escape from $\Omega_{\epsilon}$. First of all, since $w \in RH_p^{\delta}(J)$, $x^0$, $x^+$ and $x^-$ are all contained within $\Omega_{\delta}$. As they are co-linear, and as the boundary graph $\Gamma_{\epsilon}:=\{(x_1,x_2): x_2 = \epsilon^p x_1^p\}$ is convex, we know that the portion of $[x^-, x^+]$ which lies outside of $\Omega_{\epsilon}$ must be either between $x^-$ and $x^0$ or between $x^0$ and $x^+$, but not both. Let $\xi = x^-$ in the first case and $\xi = x^+$ in the second case; that is, the only part of $[x^-, x^+]$ which lies outside of $\Omega_{\epsilon}$ is contained within the segment $[x^0, \xi]$. We now proceed to change $\alpha^+$ to bring $[x^-, x^+]$ entirely within $\Omega_{\epsilon}$. If $\xi = x^-$, we decrease $\alpha^+$; if $\xi = x^+$, we increase $\alpha^+$. Let $\rho(\alpha^+)$ be the maximum value of $\frac{x_2^{1/p}}{x_1}$ along the segment $[x^0, \xi]$. We already know that $\rho(\frac{1}{2}) > \epsilon$, and, since $\epsilon > \delta$, for $\xi$ sufficiently close to $x^0$, $\rho(\alpha^+) < \epsilon$. Further, $\rho(\alpha^+)$ is continuous. Therefore, when changing $\alpha^+$ from $\frac{1}{2}$, there is a value of $\alpha^+$ such that $\rho(\alpha^+) = \epsilon$ for the first time; call this ``stopping time'' value $\omega^+$ (with its corresponding $\omega^-$ such that $\omega^+ + \omega^- = 1$). We now check that $\omega^+$ is bounded away from 0 and 1 uniformly with respect to $w$ and $I$.

If $\xi = x^+$, then $\omega^+ > \frac{1}{2}$ and $\xi_1 - x^0_1 = x^+_1 - x^0_1 = \omega^- (x_1^+ - x_1^-)$. On the other hand, if $\xi = x^-$, then $\omega^- > \frac{1}{2}$ and $\xi_1 - x^0_1 = x^-_1 - x^0_1 = \omega^+ (x^-_1 - x^+_1)$. Thus, $|\xi_1 - x^0_1| = \min\{\omega^{\pm}\}|x^-_1 - x^+_1|$.

At $\omega^+$, the line passing through $x^{\pm}$ and $x^0$ is tangent to $\Gamma_{\epsilon}$ and touches it at a point we will call $\tau$. The equation for the line tangent to $x_2 = c x_1^p$ for any constant $c$ at the point $\tau$ is \[p \tau_2 x_1 - \tau_1 x_2 = (p-1) \tau_1 \tau_2. \]
The equation for the points of intersection of this line with the graphs $\Gamma_{\delta}$ and $\Gamma_1$, reduces to
\[ t (\frac{x_1}{\tau_1})^p - p \frac{x_1}{\tau_1} = 1-p, \]
for some fixed $t$ which has the solutions $x_1 = \tau_1 \left( 1 + \frac{u_p^{\pm}(t)}{1-pu_p^{\pm}(t)} \right)$. For $\Gamma_{\delta}$, $t = \left(\frac{\delta}{\epsilon}\right)^p$ and for $\Gamma_1$, $t = \frac{1}{\epsilon^p}$. Therefore, the line tangent to $\Gamma_{\epsilon}$ at $\tau$ intersects $\Gamma_{\delta}$ at the points
\[x(\delta)^{\pm} = \left(\tau_1 \left[1 + \frac{u_p^{\pm}((\frac{\delta}{\epsilon})^p)}{1-pu_p^{\pm}((\frac{\delta}{\epsilon})^p)}\right], \tau_2 \left[\frac{1}{1-pu_p^{\pm}((\frac{\delta}{\epsilon})^p)}\right]\right),\]
and it intersects $\Gamma_1$ at
\[x(1)^{\pm} = \left(\tau_1 \left[1 + \frac{u_p^{\pm}(\frac{1}{\epsilon^p})}{1-pu_p^{\pm}(\frac{1}{\epsilon^p})}\right], \tau_2 \left[\frac{1}{1-pu_p^{\pm}(\frac{1}{\epsilon^p})}\right]\right).\]
This gives us the following string of inclusions, $[x(\delta)^-, x(\delta)^+] \subset [x^0, \xi] \subset [x^-, x^+] \subset [x(1)^-, x(1)^+]$. Therefore,
\[|x(\delta)^+_1 - x(\delta)^-_1| \le |x^0_1 - \xi_1| = \min\{\omega^{\pm}\}|x^+_1 - x^-_1| \le \min\{\omega^{\pm}\}|x(1)^+_1 - x(1)^-_1|, \]
and
\begin{align*}
\min\{\omega^{\pm}\} &\ge \frac{|x(\delta)^+_1 - x(\delta)^-_1|}{|x(1)^+_1 - x(1)^-_1|} \\
&= \frac{|(u_p^+((\frac{\delta}{\epsilon})^p) - u_p^-((\frac{\delta}{\epsilon})^p))(1-pu_p^+(\frac{1}{\epsilon^p}))(1-pu_p^-(\frac{1}{\epsilon^p}))|} {|(u_p^+(\frac{1}{\epsilon^p}) - u_p^-(\frac{1}{\epsilon^p})) (1-pu_p^+((\frac{\delta}{\epsilon})^p)) (1-pu_p^-((\frac{\delta}{\epsilon})^p))|},
\end{align*}
which is bounded away from zero and depends only on $p$, $\delta$ and $\epsilon$, and neither $w$ nor $I$.

Now, we turn to the case of $p = \infty$. Since $x^0_2 = \text{max}\{x^{\pm}_2\}$ and both points $x^{\pm}$ are in $\Omega_{\delta}$, at least one of the inequalities $\frac{x^0_2}{x^{\pm}_1} \le \epsilon$ is always true. First we take $\alpha^- = \alpha^+ = 1/2;$ if the required inequalities are both true, we fix this splitting. Otherwise, we start to change the splitting; namely, we increase $\alpha^+$ if the point $(x^+_1, x^0_2)$ is outside $\Omega_{\epsilon}$ and reduce it in the opposite case. By symmetry, it suffices to examine one of the possible situations, say, the case where $\frac{x^0_2}{x^+_1} > \epsilon$ for $\alpha^+= 1/2$. The points $x^{\pm}$ do not, in general, depend continuously on $\alpha^+$, but the first coordinates $x^{\pm}_1$ do. Since $x^0 \in \Omega_{\delta}$ and $\epsilon > \delta$, for all $\alpha^+$ sufficiently close to 1, $\frac{x^0_2}{x^+_1} < \epsilon$, and, by our assumption, $\frac{x^0_2}{x^+_1} > \epsilon$ for $\alpha^+ = 1/2$. So, when we increase $\alpha^+$ from $1/2$ (i.e., enlarge $I^+$), there is a value of $\alpha^+$ such that $\frac{x^0_2}{x^+_1} = \epsilon$ for the first time. As before, we call this ``stopping time'' $\omega^+$, with $\omega^- := 1- \omega^+$. We want to check that $\omega^{\pm}$ are bounded away from 0 and 1, using the geometry of the situation. At the stopping time, we know $\epsilon x^+_1 = x^0_2$. Also, since $x^0_2 = x^-_2$, we know that $x^0$ and $x^-$ both lie on the horizontal line through the point $(x^+_1, x^0_2)$. Moreover, since $x^- \in \Omega_{\delta}$, we know that $\frac{x^0_2}{\delta} \le x^-_1 \le x^0_2$.
Therefore, by examining the first coordinates of these points, we see
\[ 0 < \left(\frac{1}{\delta} - \frac{1}{\epsilon}\right)x^0_2 = \frac{x^0_2}{\delta} - x^+_1 \le x^0_1 - x^+_1 = \omega^- (x^-_1 - x^+_1) \le \omega^- \left(1 -\frac{1}{\epsilon}\right) x^0_2, \]
from which we see that $\omega^- \ge \frac{\epsilon/\delta - 1}{\epsilon - 1} > 0.$ Since $\omega^+ > 1/2$, this proves that $\omega^{\pm}$ are both bounded away from 0 and 1. Further, these bounds depend on $\delta$ and $\epsilon$, and on neither $w$ nor $J$.
\end{proof}

We now prove the equality of $\B$ and $B$ by establishing the following inequality.
\begin{lem}\label{omegaepsilonlemma}
For every $\delta \ge 1$, every $x \in \Omega_{\delta}$, every $p \in (1, \infty]$, every $q \in (\frac{p-1}{p}, \infty)$ and every $\epsilon > \delta$,
\[\B(x; p, q, \delta) \le B(x; p, q, \epsilon) \]
\end{lem}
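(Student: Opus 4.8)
The plan is to iterate the splitting provided by Lemma~\ref{splittinglemma} together with the concavity of $B$ from Lemma~\ref{concavitylemma}, exploiting that the splitting parameters are bounded away from $0$ and $1$. Fix $x^0 \in \Omega_\delta$, represented (via Lemma~\ref{extremalweightlemma}, or simply by the weights constructed in Section~\ref{weightsection}) by some weight $w \in RH_p^\delta(I)$ with $I = [0,1]$. By the definition of $\B(x^0; p, q, \delta)$ we must bound $\avg{w^{1-q'}}_I$ from above by $B(x^0; p, q, \epsilon)$ for an \emph{arbitrary} such $w$, so after this reduction I would work with a fixed but arbitrary $w$. Apply Lemma~\ref{splittinglemma} to $w$ with the given $\epsilon$: this yields a splitting $I = I^- \cup I^+$, $|I^\pm| = \alpha^\pm|I|$, with $\alpha^\pm$ bounded away from $0$ and $1$ by constants depending only on $p, \delta, \epsilon$, such that the segment $[x^-, x^+]$ joining $x^\pm = (\avg{w}_{I^\pm}, \avg{w^p}_{I^\pm})$ lies in $\Omega_\epsilon$ (in the $p=\infty$ case, the modified condition $x^0_2/x^\pm_1 \le \epsilon$ holds, and $(x^\pm_1, x^0_2) \in \Omega_\epsilon$). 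Since $w|_{I^\pm} \in RH_p^\delta(I^\pm) \subset RH_p^\epsilon(I^\pm)$, the points $x^\pm$ lie in $\Omega_\epsilon$, and Lemma~\ref{concavitylemma} (applied on $\Omega_\epsilon$, i.e.\ with parameter $\epsilon$ in place of $\delta$) gives
\[
B(x^0; p, q, \epsilon) \ge \alpha^- B(x^-; p, q, \epsilon) + \alpha^+ B(x^+; p, q, \epsilon).
\]

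Now iterate: subdivide $I^\pm$ again by the same lemma, obtaining a dyadic-type tree of subintervals $I_\sigma$ indexed by finite strings $\sigma \in \{-,+\}^n$, with associated points $x_\sigma \in \Omega_\epsilon$ and weights $w_\sigma = w|_{I_\sigma}$, such that at each stage $B(x_\sigma; p, q, \epsilon) \ge \sum_{\tau} \frac{|I_{\sigma\tau}|}{|I_\sigma|} B(x_{\sigma\tau}; p, q, \epsilon)$. Telescoping over $n$ levels yields
\[
B(x^0; p, q, \epsilon) \ge \sum_{|\sigma| = n} |I_\sigma|\, B(x_\sigma; p, q, \epsilon).
\]
Because the splitting parameters are uniformly bounded away from $0$ and $1$, the maximal length $\max_{|\sigma|=n} |I_\sigma|$ tends to $0$ as $n \to \infty$; this is the mechanism that forces convergence. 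On each $I_\sigma$, one has the trivial lower bound $B(x_\sigma; p, q, \epsilon) \ge (x_\sigma)_1^{1-q'} \ge \avg{w}_{I_\sigma}^{1-q'}$ (for $q>1$; for $\tfrac{p-1}{p} < q < 1$ the exponent $1-q' > 1$ and one uses instead $B \ge x_1^{1-q'}$ directly from the formula, noting $g \ge 1$). The desired conclusion should then follow from a Lebesgue-differentiation / martingale argument: as $n \to \infty$, on the interval $I_\sigma(t)$ containing a Lebesgue point $t$ of $w$, $\avg{w}_{I_\sigma(t)} \to w(t)$, and hence $\sum_{|\sigma|=n} |I_\sigma| (x_\sigma)_1^{1-q'} \to \int_I w(t)^{1-q'}\,dt = \avg{w^{1-q'}}_I$ by the martingale convergence theorem (the conditional expectations of $w$ with respect to the generated $\sigma$-algebras converge a.e.\ and in a dominated fashion appropriate to passing the limit through $t \mapsto t^{1-q'}$; for $q>1$ this map is bounded and continuous on $(0,\infty)$, for $q<1$ one argues via Fatou since $t^{1-q'}$ is convex). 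Combining, $B(x^0; p, q, \epsilon) \ge \avg{w^{1-q'}}_I$, and taking the supremum over all admissible $w$ gives $B(x^0; p, q, \epsilon) \ge \B(x^0; p, q, \delta)$.

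For the case $q=\infty$, I would run the same argument with the function $B(x;p,\infty,\epsilon) = \lim_q (\B(x;p,q,\epsilon))^{q-1}$ from \eqref{qinftybellman}, or more simply bound $\B(x^0;p,\infty,\delta)$ by first fixing a large $q$, using $\exp(-\avg{\log w}) \le \avg{w^{1-q'}}^{q-1} \le \B(x^0;p,q,\delta)^{q-1} \le B(x^0;p,q,\epsilon)^{q-1}$ via the finite-$q$ case just proved, and then letting $q \to \infty$; the concavity in the relevant variable is inherited, and the limit is continuous in $q$ by the explicit formulas. The main obstacle I anticipate is the limiting step: justifying that the telescoped sums $\sum_{|\sigma|=n}|I_\sigma|B(x_\sigma;p,q,\epsilon)$ actually converge to $\avg{w^{1-q'}}_I$ rather than merely to a lower bound of it. This requires care because $B(x_\sigma)$ could a priori stay strictly larger than $(x_\sigma)_1^{1-q'}$; the resolution is that $x_\sigma \to (w(t), w(t)^p)$ lies on $\Gamma_1$ in the limit (by Lebesgue differentiation applied to both $w$ and $w^p$, using $w \in L^p$), where $B$ equals $x_1^{1-q'}$ exactly, so the slack vanishes. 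Making this quantitative — e.g.\ controlling $B(x_\sigma) - (x_\sigma)_1^{1-q'}$ by the "defect" $(x_\sigma)_2 - (x_\sigma)_1^p$ and showing the latter sums to something vanishing — is the technical heart, and I would handle it with the continuity of $B$ on $\Omega_\epsilon$ together with uniform integrability of the relevant martingales coming from $w \in L^p \subset L^1$.
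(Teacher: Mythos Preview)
Your iteration of Lemma~\ref{splittinglemma} and Lemma~\ref{concavitylemma} to obtain
\[
B(x^0;p,q,\epsilon)\;\ge\;\sum_{|\sigma|=n}|I_\sigma|\,B(x_\sigma;p,q,\epsilon)
\]
is exactly the paper's argument. The difference lies entirely in the limiting step. The paper first reduces to \emph{step functions} $w$, observing that both sides of the target inequality behave continuously under such approximation. For a step function with $N$ jumps, at each level $n$ at most $N$ of the intervals $I_\sigma$ carry a non-constant $w$; on all the others $w$ is constant, so $x_\sigma\in\Gamma_1$ and $B(x_\sigma;p,q,\epsilon)=(x_\sigma)_1^{1-q'}$ exactly. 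Thus the sum differs from $\avg{w^{1-q'}}_I$ by at most $NM\max_\sigma|I_\sigma|\to 0$, where $M$ bounds $B$ on the compact set $\{(\avg{w}_J,\avg{w^p}_J):J\subset I\}$. This makes the limit elementary and avoids any measure-theoretic machinery.

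Your direct route via martingale convergence also works, and is in fact simpler than you fear: since $g\ge 1$ gives $B(x_\sigma)\ge (x_\sigma)_1^{1-q'}$, you already have for every $n$
\[
B(x^0;p,q,\epsilon)\;\ge\;\sum_{|\sigma|=n}|I_\sigma|\,\avg{w}_{I_\sigma}^{1-q'}\;=\;\int_I f_n,
\]
where $f_n(t)=\avg{w}_{I_{\sigma(t)}}^{1-q'}\to w(t)^{1-q'}$ a.e.\ by martingale convergence. Fatou's lemma then yields $B(x^0;p,q,\epsilon)\ge\avg{w^{1-q'}}_I$ immediately. No control of the ``slack'' $B(x_\sigma)-(x_\sigma)_1^{1-q'}$ is required, so the program in your final paragraph is unnecessary. (Incidentally, $t\mapsto t^{1-q'}$ is \emph{not} bounded on $(0,\infty)$ for $q>1$; Fatou is the right tool in both ranges of $q$.) The step-function reduction buys a cleaner, machinery-free argument; your approach is more direct but relies on the martingale convergence theorem. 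Finally, the case $q=\infty$ is not part of the lemma's statement and need not be treated here.
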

With this established, we can pass to the limit as $\epsilon \to \delta$. For $p < \infty$ and $q > 1$, this is so because $s^+$ is a continuous, increasing function of $\delta$. Thus, if $s^+(\delta) \ge \frac{1}{\gamma}$, then $s^+(\epsilon)$ is also, in which case $B(x; p, q, \epsilon)$ is infinite and the inequality is trivially true. On the other hand, if $s^+(\delta) < \frac{1}{\gamma}$, then there is a $\kappa > 0$ such that $s^+(\delta+\kappa) = \frac{1}{\gamma}$, and as long as $|\epsilon - \delta| < \kappa$, we know that $B(x; p,q, \epsilon)$ is finite and depends continuously upon $\epsilon$. A similar argument applies to $\frac{p-1}{p} < q < 1$. For $p = \infty$, the continuity in $\delta$ is clear; if $\delta \ge q$, then $B(x; \infty, q, \epsilon)$ is infinite. If $\delta < q$, then as long as $|\epsilon - \delta| < (q - \delta)$, $B(x; \infty,q,\epsilon)$ is finite and depends continuously on $\epsilon$. The following proof is due to Vasyunin \cite{vasyunin}.

\begin{proof}[Proof of Lemma \ref{omegaepsilonlemma}]
The statement of the lemma means that for an arbitrary weight $w \in RH_p^{\delta}(J)$ and for any $\epsilon > \delta$, we have
\begin{equation}\label{epsiloncondition}
\avg{w^{1-q'}}_J \le B(x; p, q, \epsilon).
\end{equation}
Here $x = (x_1, x_2) = (\avg{w}_J, \avg{w^p}_J)$. It suffices to prove \eqref{epsiloncondition} for step functions $w$ because an arbitrary weight $w$ can be approximated by a sequence of step functions $w_n$ so that all averages converge: $\avg{w_n^{1-q'}}_J \to \avg{w^{1-q'}}_J$, and $(\avg{w_n}_J, \avg{w_n^p}_J) \to (\avg{w}_J, \avg{w^p}_J)$, and we can pass to the limit in \eqref{epsiloncondition} because $B$ is continuous in $x$ if it is finite, and if it is infinite then there is nothing to prove.
So, we fix an interval $J$, a step function $w \in RH_p^{\delta}(J)$ and a number $\epsilon > \delta$. Starting with the interval $J^{0,0}$, we construct a chain of intervals $J^{n,m}$ in accordance with the rule in Lemma \ref{splittinglemma}. For the intervals in the $n$th generation, we use the notation $J^{n,m}$, where $J^{n,2k} = (J^{n-1,k})^-$ and $J^{n,2k+1}=(J^{n-1,k})^+$. Consequently, the second index runs from $0$ to $2^n -1$. The corresponding mean values will be labeled by the same pair of indices and $\alpha^{n,m} = |J^{n,m}|/|J|$. By Lemma \ref{concavitylemma}, we can write the following chain of inequalities:
\begin{align}
B(x^{0,0}; p, q, \epsilon) &\ge \alpha^{1,0} B(x^{1,0}; p, q, \epsilon) + \alpha^{1,1}B(x^{1,1}; p, q, \epsilon) \notag \\
&\ge \sum_{m=0}^{2^n-1}\alpha^{n,m} B(x^{n,m}; p, q, \epsilon).
\end{align}
The latter sum tends to $\avg{w^{1-q'}}_J$ as $n \to \infty$. Indeed, for a fixed step function $w \in RH_p^{\delta}(I)$, the set $\{ x = (\avg{w}_J,\avg{w^p}_J): J \subset I\}$ is a compact subset of $\Omega_{\delta}$, and, therefore, the continuous function $B$ is bounded on this subset, say $B \le M$, i.e., $B(x^{n,m}; p, q, \epsilon) \le M$ (excluding the case of $B = \infty$, when there's nothing to prove). So if $N$ is the number of discontinuity points for the step function $w$, then the number of intervals $J^{n,m}$ where $w$ is not a constant is at most $N$. On all other intervals $w$ is a constant, in which case $B(x^{n,m}; p, q, \epsilon) = (x_1^{n,m})^{1-q'} = w^{1-q'}$, i.e., the corresponding summand is $\avg{w^{1-q'}}_{J^{n,m}}$ and the entire sum differs from $\avg{w^{1-q'}}_J$ by at most $NM \max_m \alpha^{m,n}$. This latter quantity tends to zero because by Lemma \ref{splittinglemma}, the values $\alpha^{\pm}$ are bounded away from 0 and 1, and the maximum length of the $n$th generation intervals tends to 0 as $n \to \infty$.
For $p=\infty$, the proof is nearly identical; simply use $x_2 = \esup_J w$ instead of $x_2 = \avg{w^p}_J$.
\end{proof}

\section{$n$-dimensional results}\label{ndimsection}
We turn to the proof of Theorem \ref{ndimthm}. Lemma \ref{ndimlemma}, below, and the maximization argument from section \ref{thmproofsection} yield an upper bound on the Bellman function. This gives us an upper bound, $C(\delta, p, q, n)$, on the $A_q$ constant of the weights in question. We finish the proof by showing that for a fixed $n>1$, $p>1$ and $q>1$, the limit of $C(\delta, p, q, n)$ as $\delta \to 1$ is 1.

We only consider $n$-cubes (equal side lengths) with sides parallel to the coordinate axes. Each time we divide a cube $Q$, we cut it into $2^n$ sub-cubes $\{Q_i\}$, each of size $|Q_i| = \frac{|Q|}{2^n}$. Given a cube $Q \subset \R^n$, a weight $w \in RH_p^{\delta}(Q)$ if \[\frac{\avg{w^p}_{K}^{1/p}}{\avg{w}_{K}} \le \delta\] for all sub-cubes $K \subset Q$. Restricting ourselves to cubes doesn't allow us the flexibility we had before in choosing our splitting, and it is here that we lose the sharpness of the constant.

Given a cube $Q$ and its $2^n$ sub-cubes $\{Q_i\}$, we first find bounds on the ratio $\frac{\avg{w}_{Q_i}}{\avg{w}_{Q_j}}$ for $\delta$ sufficiently small (Lemma \ref{ratiolemma}). We can then use this bound to find a domain $\Omega_{\epsilon}$ which contains the convex hull of the points $\{(\avg{w}_{Q_i}, \avg{w^p}_{Q_i})\}$ which arise from the splitting of $Q$. Finally, we use the concavity of our function to produce an upper bound for the Bellman function.

\begin{lem}\label{ratiolemma}
Fix $n > 1$, a cube $Q \subset \R^n$ and a weight $w \in RH_p^{\delta}(Q)$. Divide $Q$ into $2^n$ equal sub-cubes $Q_i$ in the manner described above. If $1 \le \delta < \left(\frac{2^n}{2^n - 1}\right)^{1/p'}$, then
\[\frac{1}{y} \le \frac{\avg{w}_{Q_i}}{\avg{w}_{Q_j}} \le y,\]
where $y$ is given as the solution to
\begin{equation}\label{defnofy}
\left( 2+ 2^n(\delta^{-p'} - 1)\right)^{p-1} = \frac{(1+y)^p}{1+y^p}.
\end{equation}
\end{lem}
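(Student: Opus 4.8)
The plan is to reduce the $n$-dimensional ratio bound to a one-dimensional extremal problem by exploiting the reverse-Hölder condition on the parent cube $Q$ together with Hölder's inequality on the children. Fix two sub-cubes $Q_i$ and $Q_j$; by symmetry it suffices to produce an upper bound for $\avg{w}_{Q_i}/\avg{w}_{Q_j}$, since the lower bound follows by interchanging the roles of $i$ and $j$. Write $m_k := \avg{w}_{Q_k}$ and $M_k := \avg{w^p}_{Q_k}$ for each of the $2^n$ children. Averaging over $Q$ gives $\avg{w}_Q = 2^{-n}\sum_k m_k$ and $\avg{w^p}_Q = 2^{-n}\sum_k M_k$, and the hypothesis $w \in RH_p^{\delta}(Q)$ says $\avg{w^p}_Q \le \delta^p \avg{w}_Q^p$, i.e.
\[
\frac{1}{2^n}\sum_{k} M_k \le \delta^p \left(\frac{1}{2^n}\sum_k m_k\right)^{p}.
\]
On each child, Jensen/Hölder gives $M_k \ge m_k^p$, so the left side is at least $2^{-n}\sum_k m_k^p$. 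The worst case for the ratio $m_i/m_j$ occurs when we spend as little of the ``budget'' $\avg{w^p}_Q$ as possible on the other children, which means taking $M_k = m_k^p$ on every child (equality in Hölder, i.e. $w$ constant on each $Q_k$) and, moreover, making the remaining $2^n - 2$ children as small as possible.

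The key step is thus a finite-dimensional optimization: maximize $m_i/m_j$ subject to $2^{-n}\sum_k m_k^p \le \delta^p (2^{-n}\sum_k m_k)^p$ with $m_k > 0$. After normalizing (the constraint and objective are homogeneous of degree $0$), I would argue that at the optimum the $2^n - 2$ ``free'' children all collapse to the minimal allowed common value; a Lagrange-multiplier or convexity argument shows the extremal configuration has $m_k$ taking only two distinct values among the non-$(i,j)$ children is not optimal — rather all of them equal $m_j$ (the smaller of the two distinguished values) is what tightens the constraint the most while leaving $m_i/m_j$ free to grow. Setting $y = m_i/m_j$ and scaling so that the $2^n - 1$ small children each contribute $m_j = 1$ and $m_i = y$, the arithmetic-mean side becomes $2^{-n}(y + (2^n-1))$ and the power-mean side becomes $2^{-n}(y^p + (2^n - 1))$. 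Plugging into $\sum M_k = \sum m_k^p \le \delta^p (2^{-n}\sum m_k)^p \cdot 2^n$ and rearranging:
\[
y^p + (2^n - 1) \le \delta^p\, \frac{(y + 2^n - 1)^p}{2^{n(p-1)}},
\]
and I would check that the extremal $y$ — the largest value for which this is consistent — is exactly characterized by turning the inequality into the equation \eqref{defnofy}. (The algebra: divide through, isolate the factor $(2 + 2^n(\delta^{-p'}-1))^{p-1}$; here $\delta^{-p'}$ enters because $p' = p/(p-1)$ and the exponent $p-1$ comes from taking a $(p-1)$-th root after the rearrangement. The hypothesis $\delta < (2^n/(2^n-1))^{1/p'}$ is precisely the condition guaranteeing that the left-hand side of \eqref{defnofy} exceeds $1$, so that a solution $y > 1$ exists and the equation is genuinely solvable.)

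The main obstacle I anticipate is rigorously justifying that the extremal configuration for the finite optimization is the claimed one — i.e., that pushing all the ``other'' children down to the same value as the smaller distinguished child is optimal, rather than some asymmetric configuration. This requires a careful convexity/monotonicity argument: for fixed $m_i, m_j$ and fixed total mass $\sum_{k \neq i,j} m_k$, the quantity $\sum_{k \neq i,j} m_k^p$ is minimized when all those $m_k$ are equal (convexity of $t \mapsto t^p$), which \emph{loosens} the constraint — so to get the extremal (tightest) situation we actually want to check the direction carefully, keeping in mind we are maximizing $y$. The correct statement is that the binding configuration makes the constraint $\sum m_k^p \le \delta^p 2^{-n(p-1)}(\sum m_k)^p$ as hard to satisfy as possible relative to a given $y$; a clean way to see it is to fix $y = m_i/m_j$ and $m_j$, then note that among all choices of the remaining children, the constraint is most restrictive (smallest feasible set, hence largest feasible $y$ at the boundary) precisely when those children equal $m_j$, which one verifies by a one-variable perturbation. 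Once this reduction is in hand, everything else is the routine algebra sketched above, and the condition on $\delta$ falls out naturally as the solvability threshold.
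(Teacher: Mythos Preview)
Your overall strategy---deriving the inequality $\frac{\sum_k m_k^p}{(\sum_k m_k)^p}\le\frac{\delta^p}{2^{n(p-1)}}$ from H\"older on each child together with the reverse-H\"older hypothesis on $Q$, and then solving a finite-dimensional extremal problem---is exactly the paper's approach. The gap is in the extremal configuration. You claim the $2^n-2$ intermediate children should all equal $m_j=1$, but in fact they must all equal
\[
a=\left(\frac{1+y^p}{1+y}\right)^{1/(p-1)},
\]
which lies strictly between $1$ and $y$ whenever $y>1$. With $y_1=1$, $y_{2^n}=y$ fixed and the middle entries all equal to some $t\in[1,y]$, the ratio $k(t)=\frac{1+(2^n-2)t^p+y^p}{(1+(2^n-2)t+y)^p}$ has its unique critical point at $t^{p-1}=\frac{1+y^p}{1+y}$, and this is a minimum. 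Setting $k(a)=\delta^p/2^{n(p-1)}$ and using $a^{p-1}(1+y)=1+y^p$ is what produces equation \eqref{defnofy}; setting $k(1)=\delta^p/2^{n(p-1)}$ gives a genuinely different equation.

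Your reasoning about the direction of the optimization is reversed. We seek the \emph{largest} $y$ such that \emph{some} admissible tuple $(1,y_2,\dots,y_{2^n-1},y)$ satisfies $k\le C$; hence, for each candidate $y$, one must choose the intermediate values to \emph{minimize} $k$, making the constraint as easy as possible, so that larger $y$ remain feasible. Your configuration $(1,1,\dots,1,y)$ yields a strictly larger value of $k$ than $(1,a,\dots,a,y)$, so the threshold you would compute is too small---and the bound ``ratio $\le y_{\text{yours}}$'' is then actually false, since the configuration with intermediate values $a$ achieves a larger ratio while still satisfying the constraint. Concretely, for $p=2$, $n=2$, $\delta^2=1.1$, your equation gives $y\approx 1.89$ while \eqref{defnofy} gives $y\approx 2.78$. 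Once you replace your configuration with $(1,a,\dots,a,y)$, the algebra you sketch goes through and lands on \eqref{defnofy}.
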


\begin{proof} First, an observation:
\begin{equation}\label{avg_inequal}
\frac{\sum_i \avg{w}_{Q_i}^p}{(\sum_i \avg{w}_{Q_i})^p} \le \frac{\delta^p}{2^{n(p-1)}}.
\end{equation}
This can be seen from the following, where we apply H\"older's inequality and then the assumed reverse-H\"older's inequality.
\begin{align*}
\frac{1}{2^n} \sum_{i=1}^{2^n} \avg{w}_{Q_i}^p &\le \frac{1}{2^n} \sum_{i=1}^{2^n} \avg{w^p}_{Q_i} = \avg{w^p}_Q \notag \\
&\le \delta^p \avg{w}_Q^p = \delta^p \left(\frac{1}{2^n} \sum_{i=1}^{2^n}\avg{w}_{Q_i} \right)^p.
\end{align*}
The idea is that if $\delta$ is sufficiently small, \eqref{avg_inequal} restricts the size of the quantity $\frac{\avg{w}_{Q_i}}{\avg{w}_{Q_j}}$.

Temporarily let $x_i := \avg{w}_{Q_i},$ $i=1 \ldots 2^n$. Then arrange the $x_i$ in non-decreasing order and re-label so that $x_1 \le x_2 \le \cdots \le x_{2^n}$. Now, set $y_i := \frac{x_i}{x_1}$. Then $1 = y_1 \le y_2 \le \cdots \le y_{2^n}$. We are then after the solution to the following optimization problem: given the set
\[S:= \left\{ (y_1, y_2, \ldots, y_{2^n}) : 1 = y_1 \le y_2 \le \cdots \le y_{2^n} \textrm{ and } \frac{\sum_i y_i^p}{(\sum_i y_i)^p} \le \frac{\delta^p}{2^{n(p-1)}}\right\},\] what is
\[\sup \, \{ y_{2^n} : \exists (y_1, y_2, \ldots, y_{2^n}) \in S \}?\]
If this supremum is finite, then we have a bound on the ratio between the averages of our weight on different sub-cubes $Q_i$.

Define the function $k(y_1, y_2, \ldots, y_{2^n}) := \frac{\sum_i y_i^p}{(\sum_i y_i)^p}$. A little calculus (and induction) allows one to see that, for any $n > 1$, given a value $y:= y_{2^n}$, the minimum value of $k(1, x_2, \ldots, x_{2^n-1}, y)$ on the region defined by $1 \le x_2 \le \cdots \le x_{2^n-1} \le y$ is at the point $(1, a, a, \ldots, a, y)$, where $a := \left(\frac{1+y^p}{1+y}\right)^{1/(p-1)}$. Thus, if we choose our value $y$ so that $k(1, a, \ldots, a, y) = \frac{\delta^p}{2^{n(p-1)}}$, we have found our supremum. This value of $y$ solves \eqref{defnofy}, which is only possible (for $1 \le y < \infty$) if $1 \le \delta < \left(\frac{2^n}{2^n - 1}\right)^{1/p'}$.
\end{proof}

For any weight $w \in RH_p^{\delta}(Q)$, the set of points
\[P:= \{(\avg{w}_Q, \avg{w^p}_Q), (\avg{w}_{Q_1}, \avg{w^p}_{Q_1}), \ldots, (\avg{w}_{Q_{2^n}}, \avg{w^p}_{Q_{2^n}})\}\] lies in the domain $\Omega_{\delta} := \{(x_1,x_2) : x_1^p \le x_2 \le (\delta x_1)^p \}$. Our goal is to find an $\epsilon$ such that the convex hull of $P$ lies within $\Omega_{\epsilon}$. Since the curve $\Gamma_{\delta} := \{(x_1, (\delta x_1)^p)\}$ is convex, if part of the convex hull of $P$ lies outside of $\Omega_{\delta}$, only one part of one edge of the hull lies the furthest outside of $\Omega_{\delta}$. Thus, we can simply focus on pairs of points in $P$.

Since $\frac{1}{y} \avg{w}_{Q_i} \le \avg{w}_{Q_j} \le y \avg{w}_{Q_i}$, we know $\frac{1}{y} \avg{w}_Q \le \avg{w}_{Q_j} \le y \avg{w}_Q$. If we label $\avg{w}_Q = x_1^0$, as before, then we see that the worst that could happen is if $P_1:=(\frac{x_1^0}{y}, (\delta \frac{x^0_1}{y})^p)$ and $P_2:=(yx^0_1, (\delta y x_1^0)^p)$ are both in $P$. Thus, the smallest $\epsilon$ which guarantees that $\Omega_{\epsilon}$ will contain the convex hull of $P$ is such that the line between $P_1$ and $P_2$ is tangent to $\Gamma_{\epsilon}$.
Solving for $\epsilon$ yields
\begin{equation}\label{defnofepsilon}
\epsilon = \delta \left[ \frac{f_p(y)}{p} \cdot \left(\frac{f_p(y)-1}{p-1}\right)^{(1-p)/p} \right],
\end{equation}
where $f_p(y) := \frac{y^2 - y^{2-2p}}{y^2-1}$. As long as $y \ge 1$, \eqref{defnofepsilon} gives that $\epsilon \ge \delta$ and is bounded.

Consequently, if $\delta < \left(\frac{2^n}{2^n-1}\right)^{1/p'}$, there is a value $y(\delta,n,p)$ which solves \eqref{defnofy}. Then, the $\epsilon$ which satisfies our conditions is given by \eqref{defnofepsilon}, using $y(\delta, n, p)$, and this $\epsilon$ depends only upon $\delta, n$ and $p$.

Now, we proceed as before. The function $B(x; p, q, \delta)$ used in the one-dimensional case is still our ``best guess" at the true Bellman function, as the scaling argument and calculations of section \ref{Bguesssection} are identical in the $n$-dimensional case. What changes is that we can only prove a restricted version of Lemma \ref{omegaepsilonlemma}.

\begin{lem}\label{ndimlemma}
For every $n \ge 1$, every $p, q \in (1, \infty)$, every $\delta \in [1,\left(\frac{2^n}{2^n-1}\right)^{1/p'})$, every $x \in \Omega_{\delta}$, if $\epsilon$ solves \eqref{defnofepsilon}, with the value $y(\delta, n, p)$ given by \eqref{defnofy}, then
\[\B(x; p, q, \delta) \le B(x; p, q, \epsilon) \]
\end{lem}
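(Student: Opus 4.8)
The plan is to mimic the proof of Lemma \ref{omegaepsilonlemma}, replacing the one-dimensional dyadic splitting (which relied on Lemma \ref{splittinglemma}) with the $2^n$-ary cube splitting analyzed in Lemma \ref{ratiolemma} and the subsequent discussion. First I would reduce to the case of step functions $w$ on $Q$, exactly as in Lemma \ref{omegaepsilonlemma}: an arbitrary weight in $RH_p^{\delta}(Q)$ is approximated by step functions with all the relevant averages converging, and since $B(\cdot; p, q, \epsilon)$ is continuous where finite (and there is nothing to prove where it is infinite), the inequality passes to the limit. So fix a step function $w \in RH_p^{\delta}(Q)$ and let $x = x^{0,0} = (\avg{w}_Q, \avg{w^p}_Q)$.

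Next I would build a tree of subcubes: starting from $Q^{0,0} = Q$, each cube in the $n$th generation is cut into its $2^n$ dyadic children, giving cubes $Q^{n,m}$ for $0 \le m \le 2^{(n+1)\cdot\text{something}}$ — more precisely, generation $k$ has $2^{nk}$ cubes, with relative measures $\alpha^{k,m} = |Q^{k,m}|/|Q| = 2^{-nk}$. The key structural input is that, by Lemma \ref{ratiolemma} and the computation leading to \eqref{defnofepsilon}, whenever a cube $K$ with $(\avg{w}_K, \avg{w^p}_K) \in \Omega_{\delta}$ is split into its $2^n$ children, the convex hull of the $2^n$ child-points $\{(\avg{w}_{K_i}, \avg{w^p}_{K_i})\}$ lies inside $\Omega_{\epsilon}$ for the $\epsilon$ given by \eqref{defnofepsilon}. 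Since $B(\cdot; p, q, \epsilon)$ is concave on $\Omega_{\epsilon}$ (Lemma \ref{concavitylemma}, Case 1, applied with parameter $\epsilon$ in place of $\delta$), and since $\avg{w}_K = \sum_i 2^{-n} \avg{w}_{K_i}$, $\avg{w^p}_K = \sum_i 2^{-n}\avg{w^p}_{K_i}$ places the parent point at the corresponding convex combination of the children points, Jensen's inequality for concave functions gives
\[
B(x^{(K)}; p, q, \epsilon) \ge \frac{1}{2^n}\sum_{i=1}^{2^n} B(x^{(K_i)}; p, q, \epsilon).
\]
Iterating this down the tree yields, for every generation $k$,
\[
B(x^{0,0}; p, q, \epsilon) \ge \sum_{m} \alpha^{k,m}\, B(x^{k,m}; p, q, \epsilon).
\]

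Finally I would show the right-hand side converges to $\avg{w^{1-q'}}_Q$ as $k \to \infty$. For a fixed step function with, say, $N$ "interfaces" (boundaries of level sets), all but at most $C_n\cdot N \cdot k^{n-1}$ — more crudely, a vanishing fraction of — the generation-$k$ cubes have $w$ constant on them; on a cube where $w \equiv c$ we have $x_1^{k,m} = c$, $x_2^{k,m} = c^p$, so $B(x^{k,m}; p, q, \epsilon) = (x_1^{k,m})^{1-q'} = c^{1-q'}$ and the summand equals $\avg{w^{1-q'}}_{Q^{k,m}}$; the "bad" cubes contribute at most $M$ times their total measure, which tends to $0$ since $w$ is a step function in $\R^n$ and the side length $2^{-k}$ shrinks. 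Hence $\sum_m \alpha^{k,m} B(x^{k,m}; p, q, \epsilon) \to \avg{w^{1-q'}}_Q$, and we conclude $B(x; p, q, \epsilon) \ge \avg{w^{1-q'}}_Q$ for every step function and hence, taking the supremum over $w$, $B(x; p, q, \epsilon) \ge \B(x; p, q, \delta)$, as claimed.

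The main obstacle is verifying cleanly that one split always lands the convex hull of the $2^n$ children inside $\Omega_{\epsilon}$: Lemma \ref{ratiolemma} controls the pairwise ratios $\avg{w}_{Q_i}/\avg{w}_{Q_j}$ and the discussion after it identifies the worst-case pair $P_1, P_2$ and the tangency condition defining $\epsilon$, but one must be sure that (a) the extremal configuration for "how far outside $\Omega_{\delta}$ the hull can go" is indeed realized by two points on $\Gamma_{\delta}$ with the maximal allowed ratio $y$ (using convexity of $\Gamma_{\epsilon}$ and the fact that all $P$-points lie on or above $\Gamma_1$ and on or below $\Gamma_{\delta}$), and (b) that every parent cube encountered in the tree still satisfies the hypothesis $\delta < (2^n/(2^n-1))^{1/p'}$ so that Lemma \ref{ratiolemma} applies at each node — this is automatic since $w \in RH_p^{\delta}$ on all subcubes and $\delta$ is fixed. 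A secondary, routine point is the bookkeeping that the number of "non-constant" cubes in generation $k$ grows slower than $2^{nk}$, which follows because the discontinuity set of a step function in $\R^n$ has finite $(n-1)$-dimensional content.
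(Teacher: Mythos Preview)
Your proposal is correct and follows exactly the approach the paper indicates: it explicitly says the proof ``is so similar [to Lemma \ref{omegaepsilonlemma}] that we won't repeat it here,'' the only change being that ``in the $n$-dimensional case, the splitting is determined and uniform,'' which is precisely your $2^n$-ary dyadic cube tree combined with Lemma \ref{ratiolemma} and the convex-hull-in-$\Omega_\epsilon$ discussion preceding the lemma. One small slip: the count of non-constant generation-$k$ cubes should be $O(2^{k(n-1)})$, not $C_n N k^{n-1}$, but you immediately salvage this with ``a vanishing fraction,'' which is all that is needed.
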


The proof of this lemma, given the above work, is actually easier than the proof of Lemma \ref{omegaepsilonlemma}. However, the argument is so similar that we won't repeat it here. The main change is that in the $n$-dimensional case, the splitting is determined and uniform.

Now, from Lemma \ref{ndimlemma}, we can get an upper bound on the $A_q(Q)$ constant of a weight $w \in RH_p^{\delta}(Q)$.
\begin{equation}
\sup_{K \subset Q}\,\avg{w}_K\avg{w^{1-q'}}^{q-1}_K \le C(\delta, p, q, n),
\end{equation}
with \[C(\delta, p, q, n)= \begin{cases}
+\infty & 1< q \le q^{*}(p, \epsilon) \\
\frac{1}{q^{*}(p, \epsilon)}\left(\frac{q-1}{q-q^{*}(p, \epsilon)}\right)^{q-1} & q > q^{*}(p, \epsilon) \\
\end{cases},\]
and where $q^{*}(p, \epsilon)$, is as defined in \eqref{qstardef}, but with $\delta$ replaced by the $\epsilon$ which solves \eqref{defnofepsilon}, given the $y$ which is a solution to \eqref{defnofy}.

What is important for the proof of Theorem \ref{ndimthm} is the limit of this bound as $\delta$ approaches 1 for a fixed $p>1$, $q>1$, $n>1$. It is not difficult to see, from \eqref{defnofy}, that for a fixed $n>1$ and $p>1$, $\lim_{\delta \to 1} y(\delta,n,p) = 1$. Also, for a fixed $p>1$, the function $f_p(y) = \frac{y^2 - y^{2-2p}}{y^2 - 1}$ satisfies $\lim_{y \to 1} f_p(y) = p$. Consequently, the limit of the $\epsilon$ given by \eqref{defnofepsilon} as $\delta \to 1$ is 1. By our earlier work on $q^*$, we know that for a fixed $p>1$, $\lim_{\epsilon \to 1} q^*(p, \epsilon) = 1$, whence $\lim_{\delta \to 1} C(\delta, p, q, n) = 1$. Therefore, given any $n>1$, $p>1$, $q>1$, and $\eta >1$, by taking $\delta$ close enough to 1, we can ensure that every weight $w \in RH_p^{\delta}(Q)$ satisfies
\[\sup_{K \subset Q}\,\avg{w}_K\avg{w^{1-q'}}^{q-1}_K \le \eta,\] whence $RH_p^{\delta}(Q) \subset A_q^{\eta}(Q)$. This proves Theorem \ref{ndimthm}.

\begin{proof}[Proof of Theorem \ref{strongRHpthm}]
Our earlier work is nearly sufficient; as the Bellman function is dimension-blind, only the splitting of the rectangles and the extremal weights need to be addressed. At the start of the proof of Lemma \ref{splittinglemma}, given the bounded rectangle $I \subset \R^n$ and a weight $w \in \str RH_p^{\delta}(I)$, re-scale $I$ so that that the longest side(s) of $I$ has length 1. Also, translate $I$ so that (one of) the longest side(s) is the interval $(0,1)$ in the direction which we will distinguish with the label $x_1$ (as before, translating and re-scaling $I$ doesn't affect the Bellman function). Then, split $I$ by cutting this $x_1$ side a distance $0 < \alpha^- < 1$ from 0, producing two sub-rectangles $I^{\pm}$. As before, $|I^{\pm}|= \alpha^{\pm} |I|$ (where $\alpha^- + \alpha^+ = 1$), and we get two weights $w^{\pm}$ defined on $I^{\pm}$. The remainder of the splitting is done by further sub-dividing $I$ along the $x_1$ axis. Given this convention, the proof of Lemma \ref{splittinglemma} is exactly the same.

Moreover, the extremal weights we found earlier are sufficient here; for a point $x \in \Omega_{\delta}$, an extremal weight representing $x$ on the cube $I := (0,1)^n$ is simply
\[w(x_1, x_2, \ldots, x_n) = w_{c,a,\nu}(x_1) = \begin{cases} c\left(\frac{x_1}{a}\right)^{\nu}& \text{ if } 0 \le x_1 \le a \\
c& \text{ if } a \le x_1 \le 1. \\ \end{cases}, \]
with $c, a, \nu$ as before. This is not difficult to check, and it completes the proof of Theorem \ref{strongRHpthm}.
\end{proof}

\section{Appendix: Finding the $RH_p$ and $RH_{\infty}$ constants}
\begin{lem}\label{RHpnorm}
The $RH_p$ constant for the weight
\[w_{c,a,\nu}(t) = \begin{cases} c\left(\frac{t}{a}\right)^{\nu}& \text{ if } 0 \le t \le a \\
c& \text{ if } a \le t \le 1, \\ \end{cases} \]
with $0 < a \le 1$, $c \ne 0$ and $\nu > -\frac{1}{p}$ is $\frac{1+\nu}{(p\nu + 1)^{1/p}}$.
\end{lem}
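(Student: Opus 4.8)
\section{Proof proposal for Lemma \ref{RHpnorm}}

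The plan is to compute $RH_p(w)$ directly from the definition \eqref{RHpdef}, abbreviating $w:=w_{c,a,\nu}$. Since the ratio in \eqref{RHpdef} is homogeneous of degree $0$ in $c$, I may normalize $c=1$; then $w\le 1$ on $[0,1]$ when $\nu\ge 0$ and $w\ge 1$ when $\nu\le 0$, and in every case $\nu>-1/p$ forces $\nu+1>0$ and $p\nu+1>0$. It is convenient to work with $\Phi(J):=\avg{w^p}_J/\avg{w}_J^p$, so that $RH_p(w)=\big(\sup_J\Phi(J)\big)^{1/p}$, and to set $K:=(1+\nu)^p/(p\nu+1)$; the claim becomes $\sup_J\Phi(J)=K$. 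The lower bound is immediate: on $J=[0,a]$ one computes $\avg{w}_{[0,a]}=1/(1+\nu)$ and $\avg{w^p}_{[0,a]}=1/(p\nu+1)$, hence $\Phi([0,a])=K$ (the same value arises for any $J=[0,\beta]$ with $\beta\le a$, by the scaling of the power function).

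For the upper bound, fix $J=[t_0,t_1]\subseteq[0,1]$ and distinguish three cases. If $t_0\ge a$, then $w\equiv 1$ on $J$ and $\Phi(J)=1\le K$, the last inequality being $(1+\nu)^p\ge 1+p\nu$, i.e.\ convexity of $x\mapsto x^p$ above its tangent at $x=1$. If $t_1\le a$, then $w|_J$ is the pure power $(t/a)^\nu$ and a direct computation gives
\[
\Phi(J)=K\cdot\frac{(1-s)^{p-1}(1-s^{p\nu+1})}{(1-s^{\nu+1})^p},\qquad s:=t_0/t_1\in[0,1).
\]
To see that the second factor is $\le 1$, take its negative logarithm and expand via $-\log(1-x)=\sum_{k\ge 1}x^k/k$; the result is $\sum_{k\ge 1}\frac1k\big[(p-1)s^k+s^{k(p\nu+1)}-p\,s^{k(\nu+1)}\big]$, and each bracket is non-negative because $k(\nu+1)=\tfrac{p-1}{p}\,k+\tfrac1p\,k(p\nu+1)$ exhibits $k(\nu+1)$ as a convex combination of $k$ and $k(p\nu+1)$, so convexity of $x\mapsto s^x$ (valid since $0<s<1$) yields $p\,s^{k(\nu+1)}\le(p-1)s^k+s^{k(p\nu+1)}$. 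Thus $\Phi(J)\le K$ here too, with equality when $t_0=0$.

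The remaining case is $t_0<a<t_1$. Measuring lengths in units of $a$ and writing $\sigma:=t_0/a\in[0,1)$, $\tau:=t_1/a\in(1,1/a]$, the inequality $\Phi(J)\le K$ is equivalent to
\[
(\tau-\sigma)^{p-1}\big(1-\sigma^{p\nu+1}+(p\nu+1)(\tau-1)\big)\;\le\;\big(1-\sigma^{\nu+1}+(\nu+1)(\tau-1)\big)^p,
\]
which I would verify directly; at $\tau=1$ it degenerates to the inequality already established, and the boundary case $\sigma=0$ (an inequality in the single variable $\tau$, equivalent to the monotonicity on $[0,1]$ of the explicit one-variable function obtained by substituting $u=1/\tau$) is also elementary. (When $\nu\ge 0$ there is moreover a shortcut for the subcase $\avg{w}_J\ge K^{-1/(p-1)}$: there $w^p\le w$ gives $\avg{w^p}_J\le\avg{w}_J$, hence $\Phi(J)\le\avg{w}_J^{1-p}\le K$.) Combining the three cases gives $\sup_J\Phi(J)\le K$, and with the lower bound, $RH_p(w)=(1+\nu)/(p\nu+1)^{1/p}$. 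I expect the real work to sit in this last, ``straddling'' case: unlike the power-function case it does not collapse to a one-variable convexity inequality, and the naïve relaxation — bounding the contribution of the subinterval $[t_0,a]$ independently of its average, which amounts to using only $\avg{w^p}_{[t_0,a]}\le K\,\avg{w}_{[t_0,a]}^p$ — loses too much; one must retain the joint constraint linking $\avg{w}_{[t_0,a]}$, $\avg{w^p}_{[t_0,a]}$ and the splitting proportion.
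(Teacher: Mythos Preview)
Your case decomposition and the treatments of $t_0\ge a$ and $t_1\le a$ are correct; the power-series/convexity argument in the pure-power case is clean and more direct than the paper, which simply observes that this case is a rescaling of the boundary $\beta=a$ of the straddling case. The gap is exactly where you locate it: the two-variable inequality for $t_0<a<t_1$ is correctly derived but not proved, and checking the edges $\tau=1$ and $\sigma=0$ does not control the interior (equality holds to first order at the corner $(\sigma,\tau)=(0,1)$, so a crude expansion will not suffice). Your $\nu\ge 0$ shortcut covers only intervals with $\avg{w}_J$ above a fixed threshold and leaves both the small-average subcase and the entire range $-1/p<\nu<0$ open.

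The paper treats precisely this straddling case and supplies the missing idea: instead of attacking the inequality in $(\sigma,\tau)$, it passes to the variables
\[
\lambda=\frac{\sigma^{\nu+1}}{(\nu+1)\tau-\nu},\qquad
\mu=\frac{\sigma^{p\nu+1}}{(p\nu+1)\tau-p\nu},\qquad
K_0=\frac{\bigl(1+p\nu(1-\tau^{-1})\bigr)^{1/p}}{1+\nu(1-\tau^{-1})}\in(0,1],
\]
after which the $RH_p$ ratio factors as $\theta\,K_0\,\phi(\lambda,\mu)$ with $\theta=(1+\nu)/(p\nu+1)^{1/p}$ and an explicit $\phi$. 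Maximizing $\phi$ over $\mu$ alone (with $\lambda$ and $K_0$ frozen) gives the critical point $\mu=\lambda/K_0$ and the value $\phi(\lambda,\lambda/K_0)=(1-\lambda/K_0)/(1-\lambda)\le 1$, the last inequality because $K_0\le 1$ and one checks $\lambda/K_0<1$; hence the ratio is at most $\theta$. This substitution---which decouples the optimization so that one variable can be eliminated in closed form---is the substantive step your proposal is missing; ``verify directly'' is not yet a method here.
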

\begin{proof}
We want to find the supremum of the expression
\begin{equation}\label{Bpratio}
\frac{\avg{w^p}_J^{1/p}}{\avg{w}_J}
\end{equation}
over all intervals $J \subset I = [0,1]$. We first notice that the value of $c$ is immaterial, as the ratio \eqref{Bpratio} is invariant if we multiply $w$ by a constant. Therefore, we simplify the calculations and set $c=1$. It suffices to restrict our attention to intervals $J = [\alpha, \beta]$, with $0 \le \alpha < a \le \beta \le 1$. We will justify this restriction later. We now simply calculate
\begin{gather*}
\avg{w_{a,\nu}}_{[\alpha,\beta]} = \frac{\beta(\nu+1) - a\nu - a^{-\nu}\alpha^{\nu+1}}{(\beta - \alpha)(\nu + 1)} \\
\avg{w_{a, \nu}^p}_{[\alpha, \beta]} = \frac{\beta(p\nu + 1) - ap\nu - a^{-p\nu}\alpha^{p\nu +1}}{(\beta - \alpha)(p\nu + 1)},
\end{gather*}
so
\begin{equation}\label{star}
\frac{\avg{w_{a,\nu}^p}^{1/p}_{[\alpha,\beta]}}{\avg{w_{a,\nu}}_{[\alpha,\beta]}} =  \frac{(\nu + 1)(\beta - \alpha)^{1-1/p}(\beta(p\nu + 1) - ap\nu - a^{-p\nu}\alpha^{p\nu +1})^{1/p}}{(p\nu + 1)^{1/p}(\beta(\nu+1) - a\nu - a^{-\nu}\alpha^{\nu+1})}.
\end{equation}
If we define
\[ \lambda := \frac{a^{-\nu}\alpha^{\nu + 1}}{\beta(\nu + 1) - a \nu} \quad \text{ and } \quad \mu := \frac{a^{-p\nu}\alpha^{p\nu +1}}{\beta(p\nu + 1) - ap\nu},\]
we see that $0 \le \lambda < 1$ and $0 \le \mu < 1$, by our restrictions on $\alpha$, $\beta$ and $\nu$. Also, define
\[ \theta := \frac{1+\nu}{(p\nu + 1)^{1/p}}. \]
Using these substitutions, and pulling out $\beta$, the right-hand side of \eqref{star} becomes
\[\theta \frac{(1-\mu)^{1/p}}{1-\lambda}\left(1-\frac{\alpha}{\beta}\right)^{1-1/p} \frac{(1 + p\nu(1-\frac{a}{\beta}))^{1/p}}{1+\nu(1-\frac{a}{\beta})}.\]
We simplify further, using $\tau = \nu (1- \frac{a}{\beta})$, and $K = \frac{(1+p\tau)^{1/p}}{1+\tau}$. Note that for $\nu > -\frac{1}{p}$, we have $0 < K \le 1$. Then we arrive at
\begin{equation}
\frac{\avg{w_{a,\nu}^p}^{1/p}_{[\alpha,\beta]}}{\avg{w_{a,\nu}}_{[\alpha,\beta]}} = \theta \frac{(1-\mu)^{1/p}}{1-\lambda} \left(1-\frac{\alpha}{\beta}\right)^{1-1/p} K.
\end{equation}
One further reduction is possible, as
\begin{equation}\label{alphabetarelation} \lambda^p \mu^{-1} = \beta^{1-p}\alpha^{p-1} K^p, \text{ whence } \frac{\alpha}{\beta} = \left(\frac{\lambda}{K}\right)^{p'}\mu^{\frac{-1}{p-1}}
\end{equation}
So, we arrive at the expression we want to maximize,
\[\theta \frac{(1-\mu)^{1/p}}{1-\lambda}\left(1-\left(\frac{\lambda}{K}\right)^{p'}\mu^{-\frac{1}{p-1}}\right)^{1-1/p} K.\]
Let
\[\phi(\lambda, \mu) = \frac{(1-\mu)^{1/p}}{1-\lambda}\left(1-\left(\frac{\lambda}{K}\right)^{p'} \mu^{-\frac{1}{p-1}}\right)^{1-1/p},\]
and the rest is straightforward calculus.
\[\frac{\partial \phi}{\partial \mu} = \frac{(1 - \left(\frac{\lambda}{K}\right)^{p'} \mu^{-\frac{1}{p-1}})^{-1/p}(1-\mu)^{-1/p'}} {p(1-\lambda)}\left[\left(\frac{\lambda}{K\mu}\right)^{p'}-1\right],\]
which is zero at $\mu = \lambda/K$; further, this critical value is the location of a maximum. Consequently, we calculate
\[ \phi(\lambda, \frac{\lambda}{K}) = \frac{1-\frac{\lambda}{K}}{1-\lambda},\]
and, we see from \eqref{alphabetarelation} that $\lambda/K = \mu^{\frac{1}{p}}\left(\frac{\alpha}{\beta}\right)^{\frac{1}{p'}} < 1$. Since $K \le 1$, we know then that
\[0 \le \phi(\lambda, \frac{\lambda}{K}) \le 1.\]
Consequently,
\begin{equation}
\frac{\avg{w_{a,\nu}^p}^{1/p}_{[\alpha,\beta]}}{\avg{w_{a,\nu}}_{[\alpha,\beta]}} = \theta K \phi(\lambda, \mu) \le \theta = \frac{1+\nu}{(p\nu + 1)^{1/p}},
\end{equation}
which is our desired result. This bound is achieved at $\alpha = 0, \beta = a$. We note here that this bound doesn't depend on $a$. Consequently, we don't need to treat the case of $\beta < a$ separately, since in that case $a$ is simply a multiplicative constant which doesn't affect the norm. The case of $\beta = a$ (which we have considered) is sufficient to cover this.
\end{proof}

\begin{lem}\label{RHinftynorm}
The $RH_{\infty}$ constant for the weight
\[w_{c,a,\nu}(t) = \begin{cases} c\left(\frac{t}{a}\right)^{\nu}& \text{ if } 0 \le t \le a \\
c& \text{ if } a \le t \le 1, \\ \end{cases} \]
with $0 < a \le 1$, $c \ne 0$ and $\nu > 0$ is $\nu + 1$.
\end{lem}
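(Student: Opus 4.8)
The plan is to exploit the monotonicity of $w_{c,a,\nu}$. As observed at the start of the proof of Lemma \ref{RHpnorm}, the ratio $\esup_J w/\avg{w}_J$ is unaffected by multiplying $w$ by a nonzero constant, so I would first normalize $c = 1$ and set $w = w_{1,a,\nu}$. Since $\nu > 0$, the function $w$ is non-decreasing on $I = [0,1]$ — it climbs like $(t/a)^\nu$ on $[0,a]$ and is constant equal to $1$ on $[a,1]$ — so for any subinterval $J = [\alpha,\beta]$ one has $\esup_J w = w(\beta)$, and the task reduces to maximizing $w(\beta)/\avg{w}_{[\alpha,\beta]}$.

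Next I would push the left endpoint to $0$. A one-line computation gives $\partial_\alpha \avg{w}_{[\alpha,\beta]} = \bigl(\avg{w}_{[\alpha,\beta]} - w(\alpha)\bigr)/(\beta-\alpha)$, which is $\ge 0$ because $w(\alpha) \le \avg{w}_{[\alpha,\beta]}$ by monotonicity; hence $\avg{w}_{[\alpha,\beta]}$ is minimized at $\alpha = 0$, and therefore
\[
\sup_{J \subset I}\,\frac{\esup_J w}{\avg{w}_J} = \sup_{0 < \beta \le 1}\,\frac{w(\beta)}{\avg{w}_{[0,\beta]}}.
\]

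The remaining step is a two-case evaluation of $\avg{w}_{[0,\beta]}$. For $0 < \beta \le a$, integration gives $\avg{w}_{[0,\beta]} = (\beta/a)^\nu/(\nu+1)$ while $w(\beta) = (\beta/a)^\nu$, so the ratio is \emph{exactly} $\nu+1$. For $a < \beta \le 1$ one gets $w(\beta) = 1$ and $\avg{w}_{[0,\beta]} = 1 - \frac{a\nu}{\beta(\nu+1)}$, so the ratio equals $\bigl(1 - \frac{a\nu}{\beta(\nu+1)}\bigr)^{-1}$, which is decreasing in $\beta$ and hence strictly smaller than its value at $\beta = a$, again $\nu+1$. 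Combining the two cases, the $RH_\infty$ constant of $w_{c,a,\nu}$ is $\nu+1$, attained on $[0,\beta]$ for any $\beta \le a$.

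I do not expect a genuine obstacle: all the computations are elementary and the supremum is actually attained, not merely approached. The only point needing a little care is the reduction to intervals with left endpoint $0$; if one preferred to avoid it, one could instead check directly that for $[\alpha,\beta]\subset[0,a]$ the inequality $(\beta-\alpha)\beta^\nu \le \beta^{\nu+1}-\alpha^{\nu+1}$ holds (the difference is $\alpha(\beta^\nu-\alpha^\nu)\ge 0$), and dispose of intervals meeting $[a,1]$ exactly as in the second case above.
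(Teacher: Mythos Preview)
Your proof is correct and reaches the same conclusion as the paper's, with the supremum $\nu+1$ attained on $[0,\beta]$ for any $\beta\le a$. The overall strategy---normalize $c=1$, reduce to a canonical interval, compute---is the same, but you reverse the order of the two reductions and justify the $\alpha\to 0$ step differently. The paper first pushes $\beta$ down to $a$ (by direct inspection of the ratio when $\beta>a$) and then, on $[\alpha,\beta]\subset[0,a]$, analyzes the explicit function $y\mapsto (1-y)/(1-y^{\nu+1})$ with $y=\alpha/\beta$ to force $\alpha=0$. You instead use the monotonicity of $w$ together with the identity $\partial_\alpha\avg{w}_{[\alpha,\beta]}=(\avg{w}_{[\alpha,\beta]}-w(\alpha))/(\beta-\alpha)\ge 0$ to push $\alpha$ to $0$ in one stroke, uniformly in $\beta$, and only afterwards split into the cases $\beta\le a$ and $\beta>a$. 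Your reduction is a bit cleaner and more conceptual (it would work for any non-decreasing weight), while the paper's explicit calculation stays closer to the specific form of $w_{c,a,\nu}$; the remaining computations are essentially identical.
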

\begin{proof}
We seek the supremum of
\begin{equation}\label{RHinftyratio}
\frac{\esup_J w}{\avg{w}_J}
\end{equation}
over all subintervals $J \subset I:=[0,1]$. We again notice that the value of $c$ is immaterial, as \eqref{RHinftyratio} doesn't change when $w$ is multiplied by a constant. Thus, we take $c=1$. We work with intervals $J = [\alpha, \beta]$ and first prove that it is sufficient to consider $0 \le \alpha < \beta \le a$. Clearly, if $\alpha \ge a$, \eqref{RHinftyratio} is equal to one, which is not maximal; so we only consider $\alpha < a$. If $\beta > a$, then \eqref{RHinftyratio} is equal to
\[\frac{(\beta - \alpha)}{\int_{\alpha}^a (t/a)^{\nu} \, dt + (\beta - a)},\] which is maximized when $\beta = a$.

Consequently, we consider $0 \le \alpha < \beta \le a$. With this, \eqref{RHinftyratio} becomes
\[\frac{(\beta/a)^{\nu}(\beta - \alpha)}{a^{-\nu}\int_{\alpha}^{\beta} t^{\nu} \, dt} = (\nu + 1)\left(\frac{1-\frac{\alpha}{\beta}}{1-(\frac{\alpha}{\beta})^{\nu+1}}\right).\]
Let $y = \alpha/\beta$ and consider the function $y \to \frac{1-y}{1-y^{\nu+1}}$. On the interval $y \in [0,1)$, this function is maximized at $y = 0$ and has a value of $1$. Hence, $\alpha = 0$. We then see that \eqref{RHinftyratio} is equal to $\nu + 1$ on the interval $J=[0, \beta]$ for any $\beta \le a$, and this is our desired constant.
\end{proof}

\section{Acknowledgement}
We would like to thank Professor Sasha Volberg for his stimulating lectures on the Bellman function technique and for the helpful references and guidance he provided us with.

\end{document}